      \theoremstyle{plain}
      \newtheorem{theorem}{Theorem}[section]
      \newtheorem{lemma}[theorem]{Lemma}
      \newtheorem{corollary}[theorem]{Corollary}
      \newtheorem{proposition}[theorem]{Proposition}
      \newtheorem{remark}[theorem]{Remark}
       \newtheorem{example}[theorem]{Example}
      \newtheorem{examples}[theorem]{Examples}
      \newtheorem{definition}[theorem]{Definition}        
      \newtheorem{condition}[theorem]{Condition}
\numberwithin{equation}{section}
      \def\@setcopyright{}
      \def\serieslogo@{}
\def\A{\EuScript{A}} 
\def\B{\EuScript{B}}
\def\E{\mathcal{E}}
\def\V{\mathcal{V}}
\def\M{\mathcal{M}}
\def\n{\mathcal{N}}
\def\s{\mathcal{S}} 
\def\R{\mathbb R}
\def\Z{\mathbb Z}
\def\N{\mathbb N}
\def\T{\mathbb T}
\def\Id{\text{Id}}
\def\dist{\text{dist}}
\def\va{\varphi}
\def\e{\epsilon}
\def\a{\alpha}
\def\b{\beta}
\def\QED{\hfill\hfill{\square}}
\begin{document}
 
\date{\today}
\author{Victoria Sadovskaya$^{*}$}

\address{Department of Mathematics $\&$ Statistics, 
 University of South  Alabama, Mobile, AL 36688, USA}
\email{sadovska@jaguar1.usouthal.edu}

\title [Cohomology of $GL(2,\R)$-valued cocycles]
{Cohomology of $GL(2,\R)$-valued cocycles \\
over hyperbolic systems }

\thanks{$^{*}$ Supported in part by NSF grant DMS-0901842}


\begin{abstract} 
We consider H\"older continuous $GL(2,\R)$-valued cocycles over 
a transitive Anosov diffeomorphism. We give a complete classification up to 
H\"older cohomology of cocycles with one Lyapunov exponent 
and of cocycles that preserve two transverse 
H\"older continuous sub-bundles. We prove that a measurable 
cohomology between two such cocycles is H\"older continuous.
We also show that conjugacy of periodic data for two such 
cocycles  does not always imply cohomology,
but a slightly stronger assumption does.
We describe  examples that indicate that our main results 
do not extend to general $GL(2,\R)$-valued cocycles.

\end{abstract}

\maketitle 

 
\section{Introduction}

In this paper we study cohomology of 
$GL(2,\R)$-valued cocycles over a transitive Anosov diffeomorphism 
$f$ of a compact manifold $\M$. Let  $A$ be H\"older continuous function 
from $\M$ to a  metric group  $G$.
The map $\A:\M \times \Z \to G$ defined  by 
$$
\A(x,0)=e_G,  \;\;\;\; \A(x,n) = A(f^{n-1} x)\cdots A(x), \;\text{ and }\;\;
\A(x,-n)= \A(f^{-n} x,n)^{-1} 
$$
 is called a $G$-valued 
{\em cocycle} over the $\Z$-action 
generated by $f$.  The function $A(x)=\A(x,1)$ is called the {\em generator}
of $\A$, and we will often refer to $A$ as a cocycle. 

Cocycles appear naturally in  dynamical systems,
and an important example is given by the derivative cocycle.
If the tangent bundle of $\M$ is trivial,  $T\M= \M\times \R^m$, 
then the differential $df$ can be viewed as a cocycle
$$
   \A(x,n)=df^n_x\in GL(m,\R) \quad\text{and}\quad
 A(x)=df_x.
 $$ 
More generally, one can consider the restriction of $df$
to a H\"older continuous invariant sub-bundle of $T\M$,
such as the stable or unstable sub-bundle. 
H\"older regularity of the cocycles is natural in this context, and it is also 
necessary to develop a meaningful 
theory, even in the case of $G=\R$.

\begin{definition}
Cocycles $\A$ and $\B$ are 
(measurably, continuously)  {\em cohomologous} 
if there exists a (measurable, continuous) 
function  $C:\M\to G$ such that
\begin{equation}\label{conj}
  \A(x,n)=C(f^n x) \B(x,n) C(x)^{-1} 
  \quad\text{ for all }n\in \Z \text{ and }x\in \M,
\end{equation}
equivalently, for the generators  $A$ and $B$ of $\A$ and $\B$
respectively,
$$
  A(x)=C(fx) B(x) C(x)^{-1}\quad\text{ for all }x\in \M.
$$  
\end{definition}

\noindent We refer to $C$ as a {\em conjugacy}\, between 
$\A$ and $\B$. It is also called a transfer map.
\vskip.1cm

Cocycles over hyperbolic systems and their cohomology 
have been extensively  studied starting with the seminal work 
of A. Liv\v{s}ic \cite{Liv1,Liv2},
and the research has been focused on the following questions. 

\vskip.1cm
\noindent{\bf Question 1.}
{\it Is every measurable solution $C$ of \eqref{conj} continuous?}
\vskip.1cm

Measurability should be understood with respect to a suitable 
measure, for example the measure of maximal entropy or the
invariant volume.
Further, one can ask whether a continuous solution is smooth 
if the system and the cocycles are. 

Clearly, continuous cohomology of two cocycles implies
conjugacy of their periodic data. So it is natural to ask whether 
the converse it true.

\vskip.1cm
\noindent{\bf Question 2.}
{\it Suppose that whenever $p=f^n p$,  
$\;\A (p,n)=C(p) \B (p,n) C^{-1}(p)$
for some $C(p) \in G$. 
Does it follow that $\A$ and $\B$ are continuously cohomologous?}
\vskip.1cm

Without any assumptions on continuity of $C(p)$ the answer is negative
in general. If $C(p)$ is  H\"older continuous, conjugating 
$\B$ by
$C$ reduces the question to the following.

\vskip.1cm
\noindent{\bf Question 3.}
{\it Suppose that $\A (p,n)= \B (p,n) $ whenever $f^np=p$. Does it follow that 
$\A$ and $\B$ are continuously cohomologous?}
\vskip.1cm

If $G$ is $\R$ or an abelian group, positive answers to all these 
questions where given in \cite{Liv1,Liv2}. For abelian groups, Questions
2 and 3 are equivalent, moreover, the analysis 
reduces to the case when $\B$ is the identity cocycle.
Even for non-abelian $G$, the case of $B=e_G$ has been 
studied most and by now is relatively well understood.  
In  Questions 2 and 3, the assumptions become $\A (p,n)= e_G$, and
for a Lie group $G$ these questions where answered positively by 
B. Kalinin in \cite{Ka}.  Question~1 remains open in full generality, 
but it has been answered positively under additional assumptions. 
For example, M. Pollicott and C. P. Walkden in \cite{PW} assumed 
certain pinching of the cocycle, and M. Nicol and M. Pollicott in
\cite{NP} assumed boundedness of the conjugacy.

For non-abelian $G$ the question of cohomology of two arbitrary 
cocycles is much more difficult. Positive answers to Questions 1 and 3 
were given by W. Parry \cite{P} for compact $G$ and, somewhat more 
generally, by K. Schmidt \cite{Sch} when both cocycles have ``bounded 
distortion".  The non-compact case remains largely unexplored. The only
results so far have been negative. Cocycles which are measurably but not 
continuously cohomologous were constructed in \cite{PW},
and an example of cocycles with conjugate periodic data that are not 
continuously cohomologous was given by M. Guysinsky in \cite{Guy}. 
In these examples  both cocycles can be made arbitrarily close to the 
identity, so no pinching can ensure positive results. 

\vskip.1cm

In this paper we go beyond the case of compact  groups and consider 
$G=GL(2,\R)$. We obtain positive results for two classes of cocycles.
The first one consists of cocycles which have only one Lyapunov exponent
for each ergodic $f$-invariant measure. Such cocycles can be 
identified by the periodic data: {\it for every periodic point $p=f^np$, 
the eigenvalues of the matrix $\A(p,n)$ are equal in modulus} \cite{KaS3}.
We give a complete classification of these cocycles up to 
H\"older cohomology, which shows that they can be viewed 
as either elliptic or parabolic.
At the opposite end of the spectrum is the class of cocycles which 
preserve a pair of H\"older continuous transverse sub-bundles. 
It includes uniformly hyperbolic cocycles and, more generally, 
cocycles with dominated splitting. These cocycles are H\"older
cohomologous to diagonal ones.
Using the classification we obtain positive answers to 
Questions 1 and 3 and give a complete analysis of Question 2. 
In particular, we give an example of parabolic cocycles with  
$C(p)$  uniformly bounded that are not
even measurably cohomologous.

The cocycles outside of these two classes can be viewed as non-uniformly hyperbolic. We revisit examples from \cite{Guy,PW} that give negative answers
to Questions 1 and 2 and indicate that continuous classification of such cocycles 
is unlikely. Question 3 for general $GL(2,\R)$-valued  cocycles remains open.

We would like to thank Boris Kalinin for helpful discussions.
 
 
 \section {Statement of results}

\subsection{Assumptions} \label{standing assumptions}
{\em In this paper, $\M$ is a compact connected Riemannian manifold, 
$f:\M\to\M$ is a transitive $C^2$ Anosov diffeomorphism, 
and $\E=\M\times \R^2$
is a trivial vector bundle with two-dimensional fibers.
Sub-bundles of $\E$ are understood to be  one-dimensional.
We consider orientation-preserving  H\"older continuous cocycles  
$A:\M \to GL(2,\R)$ over $f$.

Measurability is  understood with respect to a mixing 
$f$-invariant probability measure on $\M$ with   
full support and local product structure,
for example the measure of maximal entropy.
Measurable objects are assumed to be defined 
almost everywhere with respect to such a measure.
When we say that a measurable object is 
continuous, we mean that it coincides 
almost everywhere with a  continuous one.
}

\vskip.2cm

First we consider cocycles satisfying the following condition, which 
is equivalent to having only one Lyapunov exponent 
for each ergodic $f$-invariant measure
\cite{KaS3}.

\begin{condition} \label{assumptions}
For each periodic point $p=f^np$ in $\M$, 
the eigenvalues of the  matrix 
$\A (p,n)=A(f^{n-1}p) \cdots A(fp) A(p) $
are equal in modulus. 
\end{condition}

The following theorem gives a complete classification
of these cocycles up to H\"older  cohomology. 
It  shows that they can be viewed as elliptic or parabolic. 
Orientation-preserving cocycles
can have non-orientable invariant sub-bundles, 
as demonstrated  by Example  \ref{non-orientable}.
Such a sub-bundle can be made orientable by passing 
to a double cover.
For a double cover $P:\tilde \M \to \M$, 
the lift of $A$ to $\tilde \M$ defined by $\tilde A(y)=A(P(y))$.

\begin{theorem} \label{reduction}
Any cocycle $A$ satisfying Condition \ref{assumptions}
belongs to exactly one of the five types below. Cocycles of 
different types are not H\"older continuously cohomologous.

\begin{itemize}
\item[I.]  If $A$ preserves exactly one H\"older continuous
sub-bundle, which is orientable, then
$A$ is H\"older continuously cohomologous to a cocycle
$$
\hskip1cm A'(x)= k(x)\left[ \begin{array}{cc}1 & \a(x) \\  
 0 & 1 \end{array} \right], 
 \text{ where }k(x)\ne 0 \text{ and } \a \text{ is not 
 cohomologous to 0.}
$$
\vskip.1cm

\item[I$'$.] If $A$ preserves exactly one H\"older continuous
sub-bundle, which is not orientable, then there exists a cocycle
$A'$ as in I such that the lifts of $A$ and $A'$ to a double cover
are H\"older continuously cohomologous.
\vskip.1cm

\item[II.] If $A$ preserves more than one orientable H\"older 
continuous sub-bundle, then $A$ is H\"older continuously 
cohomologous to  $A'(x)= k(x)\cdot \Id$, where $k(x)\ne 0$.
\vskip.1cm

\item[II$'$.] If $A$ preserves more than one non-orientable
H\"older continuous sub-bundle, then there exists a cocycle
$A'$ as in II such that the lifts of $A$ and $A'$ to a double cover
are H\"older continuously cohomologous.
\vskip.1cm

\item[III.] If $A$ does not preserve any H\"older continuous
sub-bundles then $A$ is H\"older continuously cohomologous to
$$
  \qquad A'(x) =k(x) \left[ \begin{array}{cc} \cos \a(x) & -\sin \a(x) \\ 
        \sin \a(x)& \;\;\;\cos \a(x) \end{array} \right]
        \overset{\text{def}}{=} \,k(x) \, R(\a(x)),  \,\text{ where }k(x)>0 
 $$       
and $\a:\M\to \R/ 2\pi\Z$ is such that 
 $\a\,$mod$\, \pi$ is not cohomologous to 0 in $\R/\pi \Z$.
\end{itemize}    

\end{theorem}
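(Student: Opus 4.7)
\emph{Plan.} A H\"older conjugacy $C$ between cocycles $A$ and $A'$ carries each $A$-invariant H\"older sub-bundle to an $A'$-invariant H\"older sub-bundle of the same orientability, so the five types, defined by the number and orientability of invariant sub-bundles, are mutually exclusive under H\"older cohomology. The content of the theorem is then to put $A$ into the claimed normal form in each case.

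\emph{Orientable invariant sub-bundles (Cases I and II).} In Case I, take a nowhere-vanishing H\"older section $v$ of the unique invariant sub-bundle, extend it to a H\"older frame $\{v,w\}$ of $\E$, and let $C(x)$ be the corresponding change of basis. Then
\begin{equation*}
C(fx)^{-1} A(x) C(x) \,=\, \begin{pmatrix} a(x) & b(x) \\ 0 & d(x) \end{pmatrix}
\end{equation*}
with H\"older entries and $ad>0$ since $A$ is orientation-preserving. By Condition~\ref{assumptions} the eigenvalues $a(p,n)$ and $d(p,n)$ of this upper triangular product at every periodic $p=f^np$ have equal modulus, so the periodic sums of $\log|a/d|$ vanish; the Liv\v{s}ic theorem produces a H\"older $g:\M\to\R$ with $\log|a/d| = g\circ f - g$. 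Since $a$ and $d$ share a sign, conjugating by $\mathrm{diag}(e^g,1)$ equalizes the diagonal entries to a common function $k$ and gives the form $k(x)\bigl(\begin{smallmatrix}1 & \a \\ 0 & 1\end{smallmatrix}\bigr)$. If $\a = h\circ f - h$ were a H\"older coboundary, conjugating further by $\bigl(\begin{smallmatrix}1 & h \\ 0 & 1\end{smallmatrix}\bigr)$ would yield $k(x)\,\Id$ and hence a transverse invariant sub-bundle, contradicting Case I; so $\a$ is not cohomologous to $0$. Case II is strictly analogous: two transverse orientable invariant sub-bundles supply a diagonalizing H\"older frame, and the same Liv\v{s}ic argument together with the single-exponent hypothesis equalizes the diagonal entries to produce $k(x)\,\Id$.

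\emph{Non-orientable cases (I$'$ and II$'$).} When a given invariant sub-bundle $L$ is non-orientable, pass to its orientation double cover $P:\tilde\M\to\M$; $f$ lifts to an Anosov $\tilde f$, the pulled-back sub-bundle is $\tilde A$-invariant and orientable, and applying Case I or II to $\tilde A(y)=A(P(y))$ yields the claimed lift statement.

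\emph{Case III (no invariant sub-bundle).} The main step and chief obstacle is to construct a H\"older $A$-invariant conformal structure on $\E$: a H\"older section $x\mapsto[\tau(x)]$ of positive definite symmetric $2\times 2$ forms modulo positive scalar with $A(x)^{\top}\tau(fx)A(x)$ proportional to $\tau(x)$. The space of conformal structures on $\R^2$ is the hyperbolic plane $\mathbb{H}^2$, on which each $\A(p,n)$ acts by an isometry that, by Condition~\ref{assumptions}, is elliptic or parabolic. Following the Kalinin--Sadovskaya technique for cocycles with a single Lyapunov exponent, I build $\tau$ fiberwise using stable and unstable holonomies as limits of pushed-forward conformal structures; the absence of any H\"older invariant sub-bundle rules out parabolic escape to $\partial\mathbb{H}^2$, so the limit stays in the interior and is H\"older. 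Given $\tau$, a H\"older $\tau$-orthonormal frame conjugates $A$ to a cocycle preserving the standard inner product up to scalar, necessarily of the form $k(x)R(\a(x))$ with $k>0$ by orientation-preservation. Finally $\a\bmod\pi$ is not a coboundary in $\R/\pi\Z$: a relation $\a\equiv\b\circ f-\b\pmod\pi$ would make the line field $x\mapsto R(-\b(x))\,\R e_1$ an invariant H\"older sub-bundle of $\E$, contradicting the hypothesis of Case III.
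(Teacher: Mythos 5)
Your overall architecture matches the paper's: mutual exclusivity via transport of invariant sub-bundles and their orientability under a H\"older conjugacy; triangularization by a H\"older frame adapted to an invariant sub-bundle plus the Liv\v{s}ic theorem to equalize the diagonal; and, in Case III, the Kalinin--Sadovskaya construction of an invariant H\"older conformal structure followed by conjugation by the symmetric square root. Cases I and III are essentially the paper's argument (the paper simply cites \cite{KaS3} for the conformal structure, as you in effect do).

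There are, however, two genuine gaps. First, in Case II you diagonalize using ``two transverse orientable invariant sub-bundles,'' but two distinct H\"older $A$-invariant sub-bundles need not be transverse everywhere a priori: the set where they coincide is a closed invariant set that you have not shown to be empty, and where they coincide there is no diagonalizing frame. The paper avoids this by triangularizing with \emph{one} sub-bundle and then applying the strengthened Liv\v{s}ic theorem (vanishing of periodic sums only over the non-empty open invariant set $\n$ where the two sub-bundles differ) to conclude $\a^+(p,n)=0$ everywhere and hence that $A'$ is cohomologous to $k(x)\Id$; transversality is a consequence, not a hypothesis. Second, in Cases I$'$ and II$'$ you ``apply Case I or II to $\tilde A$'' on the double cover. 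This produces a model cocycle on $\tilde\M$, whereas the statement requires a cocycle $A'$ defined on $\M$ whose \emph{lift} is cohomologous to $\tilde A$; you have not shown your normal form descends. Moreover, running the Liv\v{s}ic step upstairs presupposes transitivity of the lift $\tilde f$, which is not automatic. The paper handles both points at once: it chooses the frame upstairs so that the conjugacy satisfies $\bar C(y_1)=-\bar C(y_2)$ over the two preimages, whence the triangular cocycle $\tilde B$ is invariant under the deck transformation and descends to a cocycle $B$ on $\M$, and all Liv\v{s}ic arguments are then carried out on $\M$. (You also omit the observation, needed for the five types to be exhaustive and disjoint, that all $A$-invariant sub-bundles are simultaneously orientable or simultaneously non-orientable, and the verification that $f$ actually lifts to the orientation double cover; both are addressed explicitly in the paper.)
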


We refer to cocycles $A'$ as {\em models}.
In Section \ref{sec model} we describe cohomology 
in $GL(2,\R)$ for each type of the model cocycles
giving  explicit necessary and sufficient conditions. 
In particular, we show that measurable cohomology 
between the models is H\"older. These results  together
with Theorem \ref{reduction} allow us to establish the 
following.

\begin{theorem} \label{measurable implies Holder} 
Suppose that cocycles $A$ and $B$ satisfy
 Condition \ref{assumptions}. Then
\begin{itemize}
\item[(i)] Any measurable conjugacy between  $A$ and $B$ 
is H\"older continuous;
\item[(ii)] If the diffeomorphism $f$ and the cocycles 
$A$ and $B$ are $C^k$ 
then a H\"older continuous conjugacy between 
$A$ and $B$  is $C^{r}$, where $r=k-\e$ 
for $k \in \N\setminus \{1\}$ and any $\e>0$,  and $r=k$ for $k=1,\infty, \omega$.
\end{itemize}
\end{theorem}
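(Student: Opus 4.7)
The plan is to reduce both $A$ and $B$ to the model cocycles supplied by Theorem \ref{reduction} and then treat the cohomology problem directly between models. Theorem \ref{reduction} furnishes Hölder continuous conjugacies $C_A, C_B$ (passing to a double cover in the non-orientable types I$'$ and II$'$) together with models $A', B'$ such that $A(x)=C_A(fx)A'(x)C_A(x)^{-1}$ and $B(x)=C_B(fx)B'(x)C_B(x)^{-1}$. If $C$ is a measurable conjugacy between $A$ and $B$, the map $C'(x)=C_A(x)^{-1}C(x)C_B(x)$ is a measurable conjugacy between $A'$ and $B'$. Hence part (i) reduces to the assertion that a measurable conjugacy between two models is automatically Hölder; composing three Hölder maps then finishes the case of general $A, B$.

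For the model-to-model step, my first task is to show that $A'$ and $B'$ must be of the same type. Each type is characterized by the configuration of Hölder continuous invariant sub-bundles of $\E$, and a measurable conjugacy transports invariant sub-bundles to measurable invariant sub-bundles. Under Condition \ref{assumptions} any measurable invariant sub-bundle of a model is pinned down by its eigendirections at periodic orbits, and a Livšic-type rigidity argument for one-dimensional sub-bundles promotes it to a Hölder continuous one (after lifting, if needed, to a double cover). Once the types are known to agree, I invoke the explicit analysis carried out in Section \ref{sec model}: the equation $A'(x)=C'(fx)B'(x)C'(x)^{-1}$ decouples into scalar cohomological equations for the scalar factor $k$, for the unipotent parameter $\alpha$ in types I and I$'$, and for the rotation angle $\alpha$ in type III. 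Each such equation falls under the classical measurable-implies-Hölder theorem of Livšic \cite{Liv1, Liv2}, so $C'$ is Hölder continuous.

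Part (ii) then follows from standard higher-regularity theory for Hölder solutions of twisted cohomological equations over hyperbolic systems. Once $C$ is known to be Hölder, I would differentiate $A(x)=C(fx)B(x)C(x)^{-1}$ along stable and unstable leaves and combine the exponential contraction of $f^{\pm n}$ along these leaves with the $C^k$ hypotheses on $f, A, B$ to obtain $C^r$ regularity of $C$ along each family of leaves; the Journé lemma then pastes these together to give $C^r$ regularity on $\M$, with the indicated loss $k-\e$ in the finite non-integer range and no loss for $k=1,\infty,\omega$. The main obstacle lies in part (i), specifically the step that promotes a measurable invariant sub-bundle of a model to a Hölder one so that the classification of Theorem \ref{reduction} can be applied. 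This is precisely where Condition \ref{assumptions} is used essentially, since outside this class the examples in \cite{PW, Guy} exhibit measurably (or merely periodically) cohomologous cocycles that fail to be continuously cohomologous.
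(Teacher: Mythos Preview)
Your approach to part (i) is essentially the paper's: reduce to models via Theorem \ref{reduction} (lifting to a double cover for the non-orientable types), use the fact from Proposition \ref{properties} that under Condition \ref{assumptions} measurable invariant sub-bundles are H\"older so that the type is a measurable-cohomology invariant, and then invoke the model-to-model analysis of Section \ref{sec model}, which unwinds the conjugacy equation into scalar Liv\v{s}ic problems. The paper is a bit more careful about the double-cover step (when both cocycles have non-orientable sub-bundles one may need to lift twice, once for each), but this is a detail and your outline is correct.

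For part (ii) your sketch diverges from the paper and has a gap. The paper does not run a leafwise bootstrap plus Journ\'e argument directly; it invokes the regularity theorem of Ni\c{t}ic\u{a}--T\"or\"ok \cite[Theorem 2.4]{NT98}, whose hypothesis is a control on the adjoint action,
\[
\lim_{n\to\infty}\sup_{x\in\M}\|\text{Ad}_{\A(x,n)}\|^{1/n}=1,
\]
and verifies it by observing that each model cocycle has subexponentially growing adjoint (scalars and rotations are trivial under $\text{Ad}$, the unipotent part grows only polynomially), whence the same holds for any cocycle satisfying Condition \ref{assumptions}. Your sketch omits exactly this point: without it, the exponential contraction of $f$ along leaves need not dominate the twist by $\text{Ad}_{\A(x,n)}$ in the iterated equation for the derivatives of $C$, and the bootstrap stalls. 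So your route to (ii) is salvageable, but only after you isolate and verify this adjoint-growth condition; the paper makes it explicit and then hands the rest to \cite{NT98}.
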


\begin{remark} 
Theorem  \ref{measurable implies Holder} implies that the
H\"older classification in Theorem \ref{reduction}
coincides with the measurable one.
\end{remark}

Now we consider the question whether conjugacy 
of the periodic data for two cocycles implies cohomology.

\begin{condition}  \label{nec 1}  
$A$ and $B$ have conjugate periodic data, i.e.
for every periodic point $p=f^np$ in $\M$
 there exists  $C(p)\in GL(2,\R)$ such that 
 $\A (p,n)=C(p) \B (p,n) C^{-1}(p).$
\end{condition}

\begin{proposition} \label{conf conj}
Let $A$ and $B$ be two cocycles of type II
(or III) as in 
Theorem \ref{reduction}. If $A$ and $B$ satisfy Condition 
\ref{nec 1}, then they are H\"older continuously cohomologous.

\end{proposition}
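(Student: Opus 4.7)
The plan is to invoke Theorem \ref{reduction} to replace $A$ and $B$ by their models and then solve the cohomology equation for the models via Liv\v{s}ic-type theorems. Since H\"older cohomology is an equivalence relation and Condition \ref{nec 1} is preserved under H\"older conjugacies, I may assume throughout that $A$ and $B$ already equal their models.

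In type II the models are $A(x)=k_A(x)\Id$ and $B(x)=k_B(x)\Id$ with $k_A,k_B\ne 0$. Since scalar matrices are central, Condition \ref{nec 1} collapses to $k_A^{(n)}(p)=k_B^{(n)}(p)$ at every periodic orbit $p=f^np$; in particular $k_A$ and $k_B$ agree in sign at some periodic point, hence everywhere on the connected manifold $\M$, so I may assume both are positive. The classical Liv\v{s}ic theorem applied to the $\R$-valued H\"older function $\log k_A-\log k_B$ (whose Birkhoff sums vanish on every periodic orbit) yields a H\"older $h:\M\to\R$ with $\log k_A-\log k_B=h\circ f-h$, and then $C(x)=e^{h(x)}\Id$ is the desired conjugacy.

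In type III the models are $A(x)=k_A(x)R(\a_A(x))$ and $B(x)=k_B(x)R(\a_B(x))$ with $k_A,k_B>0$, and both take values in the abelian subgroup $\{kR(\a):k>0,\,\a\in\R/2\pi\Z\}\cong\C^*$ via $kR(\a)\mapsto ke^{i\a}$. Matching determinants and traces of the $GL(2,\R)$-conjugate matrices $\A(p,n)$ and $\B(p,n)$ gives $k_A^{(n)}(p)=k_B^{(n)}(p)$ and $\cos\a_A^{(n)}(p)=\cos\a_B^{(n)}(p)$, so $\a_A^{(n)}(p)\equiv \e(p)\,\a_B^{(n)}(p)\pmod{2\pi}$ for some sign $\e(p)\in\{\pm 1\}$. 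Applying Liv\v{s}ic to $\log k_A-\log k_B$ produces a H\"older $\la>0$ with $k_A=(\la\circ f/\la)k_B$; conjugating $B$ by $\la(x)\Id$ reduces the problem to $k_A=k_B$, and what remains is the angle equation $\a_A-\a_B=\b\circ f-\b$ in $\R/2\pi\Z$.

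The hard part will be sign consistency: showing that, possibly after replacing $B$ by the H\"older-cohomologous cocycle $JBJ^{-1}=k_BR(-\a_B)$ with $J=\mathrm{diag}(1,-1)$, one can take $\e(p)\equiv +1$ at every periodic point. I would establish this by a closing/shadowing argument: using the Anosov shadowing lemma, two periodic orbits with conflicting signs can be spliced into a new periodic orbit $q$ of large period whose angle data would force $\cos\a_A^{(N)}(q)\ne\cos\a_B^{(N)}(q)$, contradicting Condition \ref{nec 1}, unless all signs already agree. Once $\e\equiv +1$, the $\R/2\pi\Z$-valued H\"older cocycle $\a_A-\a_B$ has vanishing periodic Birkhoff sums, so the Liv\v{s}ic theorem for cocycles into a compact abelian Lie group (cf.\ \cite{P,Sch}) supplies a H\"older $\b:\M\to\R/2\pi\Z$ with $\a_A-\a_B=\b\circ f-\b$ modulo $2\pi$, and $C(x)=R(\b(x))$ is the sought H\"older conjugacy.
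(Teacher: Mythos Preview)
Your proposal is correct and follows essentially the same route as the paper: reduce to the models via Theorem~\ref{reduction}, handle the scalar part by the real Liv\v{s}ic theorem, and for type~III resolve the sign ambiguity $\a_A^{(n)}(p)\equiv\pm\a_B^{(n)}(p)$ by a specification/shadowing splice of two periodic orbits with opposite signs (this is the paper's Lemma~\ref{same sign}, phrased there as constancy of the sign of $\det C(p)$), then finish with Liv\v{s}ic for the $\R/2\pi\Z$-valued angle difference. The only cosmetic difference is that the paper deduces the rotation/reflection dichotomy for $C(p)$ from the fact that it must preserve the standard conformal structure, whereas you obtain the same dichotomy from matching trace and determinant; these are equivalent.
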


\noindent Example \ref{periodic conj}  shows
that, in general,  Condition  \ref{nec 1} does not 
imply measurable cohomology even when $C(p)$
is uniformly bounded. 

\begin{example} \label{periodic conj}
There exist cocycles 
$
A(x)=\left[ \begin{array}{cc} 1 &  \a(x) \\ 
       0& 1 \end{array} \right] \text{ and }\,
       B(x)=\left[ \begin{array}{cc} 1 &  \b(x) \\ 
 0& 1 \end{array} \right] 
$  \\
arbitrarily close to the identity that satisfy  
Condition \ref {nec 1} with $C(p)$ uniformly bounded, 
but are not measurably cohomologous. 

\end{example}

However,  continuity of the conjugacy at a single point
ensures H\"older cohomology of the cocycles.
Continuity of $C$ at $p_0$ can be replaced by a 
slightly weaker assumption that $\lim_{p\to z} C(p)$ exists
at a point $z\in \M$.

\begin{theorem} \label{periodic cont}
Let $A$ and $B$ be two cocycles satisfying 
Condition \ref{assumptions}. If $A$ and $B$ 
satisfy Condition \ref{nec 1} and $C(p)$ is continuous 
 at a point $p_0$, then $A$ and $B$ are 
H\"older continuously cohomologous.
\end{theorem}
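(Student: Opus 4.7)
My plan is to reduce to model cocycles via Theorem~\ref{reduction}, settle the semisimple types with Proposition~\ref{conf conj}, and then attack the remaining parabolic case (types I, I$'$) by using the Anosov closing (or specification) property to push the normalization at $p_0$ out to every periodic orbit. Writing $A = H(f\cdot) A' H^{-1}$ and $B = K(f\cdot) B' K^{-1}$ for models $A',B'$ and H\"older $H,K$ furnished by Theorem~\ref{reduction}, the given periodic conjugacy pulls back to $C'(p) = H(p)^{-1} C(p) K(p)$, still continuous at $p_0$. Since $\A(p,n)$ and $\B(p,n)$ share a Jordan form, $A$ and $B$ must have the same type. Types II and III are immediate from Proposition~\ref{conf conj}, which does not even require the continuity hypothesis. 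For type I$'$ one passes to an orientation double cover, reducing to type I.

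So we may assume models $A(x) = k_A(x) U_A(x)$ and $B(x) = k_B(x) U_B(x)$, with $U_\star(x) = \Id + \alpha_\star(x) N$ upper unipotent and $N$ the standard $2\times 2$ nilpotent. Equality of traces on periodic matrices gives $\prod k_A(f^i p) = \prod k_B(f^i p)$, so scalar Liv\v{s}ic yields a H\"older coboundary $\log(k_A/k_B) = \va\circ f - \va$; conjugating $B$ by $\exp(\va)\cdot\Id$ replaces $B$ by a cohomologous cocycle with the same $k$-factor as $A$ and leaves $C(p)$ as a valid periodic conjugacy. Hence we may assume $k_A = k_B = k$. Both models now preserve the horizontal sub-bundle, so each $C(p)$ is upper triangular with diagonal entries $a(p),d(p)$. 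A direct computation using $N^2 = 0$ gives $\A(p,n_p) = K(p,n_p)\bigl(\Id + S_A(p,n_p)\, N\bigr)$ with Birkhoff sum $S_A(p,n)=\sum_{i=0}^{n-1}\alpha_A(f^i p)$; the periodic conjugacy identity then forces $S_A(p,n_p) = \lambda(p)\, S_B(p,n_p)$, where $\lambda(p) := a(p)/d(p)$ is well-defined modulo the centralizer of $\B(p,n_p)$ whenever $S_B(p,n_p)\ne 0$. One constant conjugation by $C_0 := \lim_{p\to p_0}C(p)$ normalizes to $C_0 = \Id$, so $\lambda(p)\to 1$ as periodic $p\to p_0$.

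The core step is to show $\lambda(p) = 1$ for every periodic $p$ with $S_B(p,n_p)\ne 0$ (the case $S_B(p,n_p)=0$ forces $S_A(p,n_p)=0$ automatically by comparing Jordan forms). Given such a $p$, the Anosov specification (or closing lemma applied to a dense heteroclinic orbit through $p_0$ and $p$) yields periodic points $r_k$ with $r_k\to p_0$ whose orbits $\delta_k$-shadow the orbit of $p$ for $m_k$ full periods, with $\delta_k\to 0$ and $m_k\to\infty$ chosen so that the H\"older shadowing error and the bounded transition gaps both contribute $o(m_k)$ to the relevant Birkhoff sums. Continuity of $C$ at $p_0$ gives $\lambda(r_k)\to 1$; at the same time $S_\star(r_k,n_{r_k}) = m_k S_\star(p,n_p) + o(m_k)$ for $\star\in\{A,B\}$, so $\lambda(r_k) = S_A(r_k,n_{r_k})/S_B(r_k,n_{r_k}) \to \lambda(p)$. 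Therefore $\lambda(p) = 1$, so $\alpha_A - \alpha_B$ has vanishing Birkhoff sums on every periodic orbit; scalar Liv\v{s}ic then produces a H\"older $\psi$ with $\alpha_A-\alpha_B = \psi\circ f - \psi$, and the upper-unipotent map $C_{\mathrm{glob}}(x)$ with $(1,2)$-entry $\psi(x)$ is easily checked to be a global H\"older conjugacy between $A$ and $B$. I expect the shadowing estimate behind $\lambda(r_k)\to\lambda(p)$ to be the main technical obstacle; the remaining steps are standard Liv\v{s}ic applications.
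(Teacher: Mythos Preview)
Your specification argument for the type I models is essentially the paper's: you produce periodic points near $p_0$ whose Birkhoff-sum ratio $S_A/S_B$ approximates that at an arbitrary periodic $p$, forcing the ratio to be constant. The paper runs this as a contradiction (if two periodic points give distinct ratios $c_1\ne c_2$, then near \emph{every} point one finds periodic orbits with ratio near $c_1$ and near $c_2$, so $C$ has no limit anywhere), while you normalize by $C_0$ first and then prove $\lambda(p)\equiv 1$; the content is identical.

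Where your sketch is too thin is in handling the non-orientable types. First, the assertion ``since $\A(p,n)$ and $\B(p,n)$ share a Jordan form, $A$ and $B$ must have the same type'' does not by itself separate I from I$'$ (or II from II$'$): orientability of the invariant sub-bundle is not visible in the periodic Jordan data. The paper needs Lemma~\ref{not conjugate}, which in turn rests on Remark~\ref{not sim} (for a type-I$'$ cocycle there is a periodic $p$ with $\A(p,n)$ conjugate to $-\A'(p,n)$ rather than to $\A'(p,n)$ for its model $A'$); this is what actually excludes the mixed case. Second, you never treat type II$'$, and Proposition~\ref{conf conj} only covers II and III. Third, for type I$'$ you say ``pass to an orientation double cover, reducing to type I,'' but you must argue that a \emph{single} cover orients the invariant sub-bundles of both $A$ and $B$, and then that the H\"older conjugacy obtained upstairs descends to $\M$. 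The paper does the latter via Lemma~\ref{model conj} together with the sign relation $\tilde C(y_1)=-\tilde C(y_2)$ in \eqref{C pm} for the reductions to model form; the descent is not automatic and uses the description of the centralizer. These are the points you should expect to spend real effort on; the parabolic core you have identified correctly.
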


\begin{corollary} Suppose that  cocycles $A$ and $B$  satisfying 
Condition \ref{assumptions} have the same periodic data,
i.e. $\A(p,n)=\B(p,n)$ whenever $f^np=p$. Then $A$ and $B$
are H\"older continuously  cohomologous.
\end{corollary}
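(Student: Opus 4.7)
The plan is to derive this corollary as an immediate consequence of Theorem \ref{periodic cont}. The hypothesis $\A(p,n)=\B(p,n)$ at every periodic point $p=f^np$ is precisely Condition \ref{nec 1} instantiated with the trivial choice $C(p)=\Id$ for all such $p$, since one can write $\A(p,n)=\Id\cdot\B(p,n)\cdot\Id^{-1}$. Thus both Condition \ref{assumptions} (given) and Condition \ref{nec 1} (just verified) are in force for the pair $(A,B)$.

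It then remains only to check the continuity assumption required by Theorem \ref{periodic cont}. With $C(p)=\Id$ on the set $\mathrm{Per}(f)$ of periodic points, the function $C$ is the restriction to $\mathrm{Per}(f)$ of the constant map $\M\to GL(2,\R)$, $x\mapsto \Id$. In particular, for any $p_0\in \M$ (or any $z\in\M$ in the sense of the remark following Theorem \ref{periodic cont}), one has $\lim_{p\to p_0,\,p\in\mathrm{Per}(f)} C(p)=\Id$, so the continuity hypothesis is trivially satisfied. Invoking Theorem \ref{periodic cont} directly yields that $A$ and $B$ are H\"older continuously cohomologous, which is the claim.

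In this sense there is no substantive obstacle at the level of the corollary itself: all the work has been packaged into Theorem \ref{periodic cont} (which in turn rests on the classification in Theorem \ref{reduction} and the cohomology analysis of the model cocycles). The only point to observe is that $C\equiv\Id$ is an admissible intertwiner in Condition \ref{nec 1}, which is immediate from the equality of the periodic data.
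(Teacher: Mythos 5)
Your proposal is correct and is exactly the intended argument: the corollary is stated immediately after Theorem \ref{periodic cont} precisely because taking $C(p)\equiv\Id$ satisfies Condition \ref{nec 1} and is continuous everywhere, so the theorem applies directly. The paper gives no separate proof for this reason, and your observation that all substantive work lives in Theorem \ref{periodic cont} matches the paper's structure.
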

\vskip.2cm

Next we consider cocycles 
that preserve two H\"older continuous transverse sub-bundles.
These include uniformly hyperbolic cocycles, and more generally 
cocycles with dominated splitting.
Such cocycles cannot be easily characterized by the periodic
data. The only positive result is due to M. Guysinsky \cite{Guy}. We recall that for a 
periodic point $p=f^n p$,  the Lyapunov exponents of 
a cocycle $A$ at $p$ are given by 
 $$
    \lambda_p=n^{-1} \ln |\lambda'_p| \quad\text{and} \quad
    \mu_p=n^{-1} \ln |\mu'_p|,
 $$
 where $\lambda'_p$ and $\mu'_p$ are the eigenvalues
 of the matrix $\A(p,n)$.

\begin{theorem}[\cite{Guy}]
Let $A:\M\to GL(2,\R)$ be a H\"older continuous cocycle over $f$.
Suppose that there exist numbers $\lambda<\mu$
and a sufficiently small $\e>0$ such that 
$$  |\lambda-\lambda_p|<\e \quad \text{and}\quad
  |\mu-\mu_p |<\e \quad \text{for every periodic point } p.
$$
Then $A$ preserves two transverse
H\"older continuous sub-bundles.
The smallness of $\e$ depends only on the map $f$, 
the numbers $\lambda$ and $\mu$, and the H\"older exponent 
of $A$.
\end{theorem}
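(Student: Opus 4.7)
The plan is to transfer the periodic-orbit control on Lyapunov exponents to all orbit segments via the Anosov closing lemma, and then use uniform separation of singular values to construct $A$-invariant transverse cones that contract to the two desired H\"older sub-bundles.

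The first step is to show that for every $x\in\M$ and all sufficiently large $n$, the singular values of $\A(x,n)$ are close to $e^{n\la}$ and $e^{n\mu}$ on the logarithmic scale. Given $x$ with $\dist(f^nx,x)$ small, the Anosov closing lemma produces a periodic point $p=f^np$ shadowing the segment $x,\ldots,f^{n-1}x$ with exponential rate $\dist(f^kx,f^kp)\le C\theta^{\min(k,n-k)}$. Combining this with H\"older continuity of $A$ of exponent $\b$ and a Liv\v{s}ic-type telescoping argument yields
$$
  \|\A(x,n)\A(p,n)^{-1}\|,\;\|\A(p,n)\A(x,n)^{-1}\|\le K,
$$
where $K$ depends only on $f$ and the H\"older data of $A$, but not on $n$. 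Hence the singular values of $\A(x,n)$ differ from those of $\A(p,n)$ by a bounded factor; by hypothesis the latter lie near $e^{n\la}$ and $e^{n\mu}$. Since periodic orbits are dense, every orbit segment can be closed in this fashion, and one deduces a uniform gap: the ratio of singular values of $\A(x,n)$ is at least $e^{n\tau}$ for some $\tau>0$ and all large $n$, provided $\e$ is small enough compared with $\mu-\la$, the H\"older exponent $\b$, and the expansion rates of $f$.

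Given this uniform exponential separation, the invariant sub-bundles are produced by a standard cone/graph-transform construction. Fix $\delta$ with $\la<\delta<\mu$ and a large $N$. For each $x$, let $u_N(x)$ be the singular direction of $\A(x,N)$ corresponding to the large singular value and $s_N(x)$ the transverse one. Uniform separation forces cone neighborhoods $C^u(x)\ni u_N(x)$ and $C^s(x)\ni s_N(x)$ of fixed angular size to satisfy
$$
   A(x)\bigl(C^u(x)\bigr)\subset \bigl(C^u(fx)\bigr)^\circ \quad\text{and}\quad
   A(x)^{-1}\bigl(C^s(fx)\bigr)\subset \bigl(C^s(x)\bigr)^\circ,
$$
with uniform projective contraction. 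The intersections
$$
  E^u(x)=\bigcap_{n\ge 0}\A(f^{-n}x,n)\bigl(C^u(f^{-n}x)\bigr), \qquad
  E^s(x)=\bigcap_{n\ge 0}\A(x,n)^{-1}\bigl(C^s(f^n x)\bigr)
$$
then define two $A$-invariant transverse line fields, and H\"older regularity with exponent depending on $\b$, $\mu-\la$, and the rates of $f$ follows from the classical fiber-contraction argument, exactly as in the proof of H\"older regularity of Anosov stable and unstable distributions.

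The hard part will be the quantitative comparison in the first step: one must choose $\e$ small enough so that the inevitable loss from the telescoping estimate (governed by $\b$ and the shadowing constants of $f$) does not swallow the gap $\mu-\la$, and so that the resulting cone contraction is strict enough to yield exponential convergence. Once this balance of constants is made explicit, the invariant sub-bundles and their H\"older regularity follow by now-standard arguments.
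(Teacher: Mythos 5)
The paper does not actually prove this theorem; it is imported from Guysinsky's paper \cite{Guy} and used as a black box, so there is no internal proof to compare against. Your overall architecture --- upgrade the periodic eigenvalue data to a uniform exponential gap between the singular values of $\A(x,n)$ for all $x$ and large $n$, then run a cone/graph-transform argument to obtain the dominated splitting and its H\"older regularity --- is the standard and correct route, and the second half of your argument (cones, invariance, H\"older regularity by fiber contraction) is fine once the first half is in place.

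The gap is in the first step. The bound $\|\A(x,n)\A(p,n)^{-1}\|\le K$ with $K$ independent of $n$ does not follow from H\"older continuity plus exponential shadowing, and it is generally false in exactly the regime of this theorem. Telescoping gives
$$
  \A(x,n)-\A(p,n)=\sum_{k=0}^{n-1}\A(f^{k+1}x,\,n-k-1)\,\bigl[A(f^kx)-A(f^kp)\bigr]\,\A(p,k),
$$
and the $k$-th term can be as large as $e^{(n-k)\mu}\cdot\delta^\beta\theta^{\beta\min(k,n-k)}\cdot e^{k\mu}$; after multiplying by $\|\A(p,n)^{-1}\|\approx e^{-n\lambda}$ one is left with uncancelled factors of order $e^{n(\mu-\lambda)}$, which blow up. Bounded distortion of the kind you assert is precisely the fiber-bunching condition (roughly $\mu-\lambda$ small compared with the H\"older exponent times the hyperbolicity rate of $f$), which is \emph{not} available here: $\lambda<\mu$ are arbitrary and only $\e$ is assumed small. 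What one can and must prove instead is the weaker statement that the \emph{singular values} of $\A(x,n)$ lie within a factor $e^{n\e'}$ of $e^{n\lambda}$ and $e^{n\mu}$; this is genuinely the hard part, and it is not obtained by comparing $\A(x,n)$ and $\A(p,n)$ as matrices, but by iterative norm-approximation arguments in the spirit of Kalinin's Liv\v{s}ic theorem for matrix cocycles (approximating the Lyapunov exponents of arbitrary ergodic measures by periodic data, combined with uniform subadditivity of $\log\|\A(x,n)\|$ and of the inverse cocycle), or by Guysinsky's original argument. As written, your proposal identifies the right intermediate goal but routes it through a false lemma; a secondary, routine point is that closing an arbitrary orbit segment of prescribed length $n$ requires a short specification argument rather than the closing lemma applied verbatim.
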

The assumptions of the theorem are quite strong, however, it is
not sufficient just  to have $\lambda_p$ and $\mu_p$ contained in two 
disjoint closed intervals, as was demonstrated by A.~Gogolev in \cite{Gog}.

\begin{theorem} \label{2 sub-bundles}
Let $A$ and $B$ be two cocycles such that each one 
preserves  two H\"older continuous transverse sub-bundles. 
Then 
\begin{itemize}
\item[(i)] If the $A$-invariant sub-bundles are 
orientable, then $A$ is H\"older cohomologous to a diagonal cocycle.
If the sub-bundles are non-orientable, then 
there exists a diagonal cocycle $A'$ such that the lifts of 
$A$ and $A'$ to a double cover are H\"older 
cohomologous.

\item[(ii)] Any measurable conjugacy between $A$ and $B$
is H\"older continuous.

\item[(iii)] If $A$ and $B$ have conjugate periodic data,
then they are H\"older  cohomologous.

\end{itemize}

\end{theorem}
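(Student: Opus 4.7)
The plan is to prove the three parts sequentially, in each case reducing to the classical Liv\v{s}ic theorem for scalar H\"older cocycles over the transitive Anosov $f$. For part (i), I work first in the orientable case: choose H\"older continuous nowhere-vanishing sections $v_1(x)\in V_1(x)$ and $v_2(x)\in V_2(x)$ of the two invariant sub-bundles. Since $V_1\oplus V_2=\E$ and the sub-bundles are transverse and H\"older continuous, the matrix $C(x)=[v_1(x)\mid v_2(x)]$ is H\"older continuous and invertible, with $C(x)^{-1}V_i(x)=\R e_i$. Then $A'(x):=C(fx)^{-1}A(x)C(x)$ preserves both coordinate axes and is therefore diagonal. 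In the non-orientable case, lift to a double cover that trivializes the orientation of both sub-bundles and apply the orientable construction to the lifted cocycle.

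For part (ii), I use (i) to write $A=C_A(f\,\cdot)\tilde A\,C_A^{-1}$ and $B=C_B(f\,\cdot)\tilde B\,C_B^{-1}$ with H\"older $C_A,C_B$ and diagonal $\tilde A=\mathrm{diag}(a_1,a_2)$, $\tilde B=\mathrm{diag}(b_1,b_2)$ (passing to a double cover if needed). A measurable conjugacy $H$ from $B$ to $A$ gives rise to the measurable conjugacy $\tilde H(x)=C_A(x)^{-1}H(x)C_B(x)$ between $\tilde A$ and $\tilde B$. The relation $\tilde H(fx)\tilde B(x)=\tilde A(x)\tilde H(x)$ decouples entrywise into $h_{ij}(fx)\,b_j(x)=a_i(x)\,h_{ij}(x)$, so each $h_{ij}$ is a measurable solution of a Liv\v{s}ic cohomological equation for the scalar multiplicative cocycle $a_i/b_j$. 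A zero-one argument on the support of $h_{ij}$, which is $f$-invariant modulo null sets, reduces matters to the scalar Liv\v{s}ic regularity theorem applied to $\log|h_{ij}|$, yielding H\"older continuity of each entry; hence $\tilde H$, and therefore $H$, are H\"older.

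For part (iii), reduce once more via (i) to diagonal cocycles $\tilde A,\tilde B$, noting that conjugacy of periodic data is preserved by H\"older changes of coordinates. For every periodic $p=f^np$ the eigenvalues of $\tilde A(p,n)$ and $\tilde B(p,n)$ agree as multisets: $\{\tilde a_1(p,n),\tilde a_2(p,n)\}=\{\tilde b_1(p,n),\tilde b_2(p,n)\}$. After possibly swapping the labels of the sub-bundles of $B$, we may assume $\tilde a_i(p,n)=\tilde b_i(p,n)$ for $i=1,2$ on every periodic orbit. Applying the scalar Liv\v{s}ic theorem to $\log|a_i/b_i|$, together with a sign-matching argument, produces H\"older continuous functions $\varphi_i$ with $a_i(x)/b_i(x)=\varphi_i(fx)/\varphi_i(x)$; then $\mathrm{diag}(\varphi_1,\varphi_2)$ conjugates $\tilde B$ to $\tilde A$, and unwinding the changes of coordinates yields a H\"older conjugacy between $A$ and $B$.

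The step I expect to be most delicate is the global bookkeeping in part (iii): arguing that the matching of $A$-invariant sub-bundles to $B$-invariant sub-bundles realized by the $C(p)$ over periodic orbits can be upgraded to a single globally consistent labeling (so the scalar Liv\v{s}ic theorem can be applied independently to each diagonal entry), and handling the signs of $a_i/b_i$ consistently, especially in the non-orientable case where one must first pass to a common double cover.
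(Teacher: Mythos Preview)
Your outline for part (i) coincides with the paper's: pick H\"older unit sections of the two sub-bundles, use the change-of-basis matrix to conjugate $A$ to a diagonal cocycle, and in the non-orientable case pass to a double cover.

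For part (ii) your entrywise argument is a valid alternative to the paper's. The paper argues geometrically: if $\tilde H$ conjugates the diagonal $\tilde B$ to the diagonal $\tilde A$, then $\tilde H(\E_1)$ and $\tilde H(\E_2)$ are measurable $\tilde A$-invariant sub-bundles; a separate lemma shows that a diagonal cocycle either has \emph{only} $\E_1,\E_2$ as invariant sub-bundles or is H\"older cohomologous to a scalar cocycle. In the non-scalar case this forces $\tilde H$ to be diagonal or anti-diagonal, and scalar Liv\v{s}ic finishes. Your approach bypasses the sub-bundle lemma and works entry by entry, which is more elementary; just make sure you spell out the sign step (each $a_i/b_j$ has constant sign on the connected $\M$, and if that sign is negative then mixing forces $h_{ij}\equiv 0$ a.e., while if positive the sign of $h_{ij}$ is $f$-invariant and hence constant by ergodicity). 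One genuine wrinkle you gloss over: if both $A$ and $B$ have non-orientable sub-bundles, the double covers that orient them need not coincide, so you must pass to a \emph{common} finite cover (e.g.\ iterate the double-cover construction) before reducing to the diagonal case.

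For part (iii) you have correctly located the crux, but your proposal does not contain the argument that resolves it, and ``after possibly swapping the labels'' hides exactly the content that has to be proved. The paper's mechanism is a specification/closing argument: suppose there are periodic points $p_1=f^{n_1}p_1$ and $p_2=f^{n_2}p_2$ at which the matchings disagree, say $a_1^\times(p_1,n_1)=b_1^\times(p_1,n_1)\ne b_2^\times(p_1,n_1)$ but $a_1^\times(p_2,n_2)=b_2^\times(p_2,n_2)\ne b_1^\times(p_2,n_2)$. Using specification, concatenate $k$ laps around $p_1$ followed by $k$ laps around $p_2$ (with a bounded number $M_\e$ of transition iterates) and close up to a periodic point $q$. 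The multiplicative closing estimate (your Corollary for products along $\e$-close orbit segments) controls $a_1^\times$ and $b_i^\times$ along $q$ up to factors independent of $k$; for $k$ large this forces $a_1^\times(q,n)\ne b_1^\times(q,n)$ and $a_1^\times(q,n)\ne b_2^\times(q,n)$ simultaneously, contradicting conjugacy of the periodic data at $q$. This is the missing idea you should supply. After that, your scalar Liv\v{s}ic plus sign-matching plan (odd-period orbits rule out $a_i/b_i<0$) goes through. The non-orientable case again requires passing to a common cover before invoking the diagonal argument; the paper also checks, via a parity argument at odd-period points, that $A$ and $B$ cannot have invariant sub-bundles of different orientability types when their periodic data are conjugate.
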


Our main results do not extend to general 
$GL(2,\R)$-valued cocycles, as demonstrated by  
the examples below, based on \cite{Guy,PW}. 
These cocycles can be viewed 
as non-uniformly hyperbolic, they have two different exponents at 
(almost) all periodic points, however the exponents can be 
arbitrarily close to each other.
The examples also indicate that  
a meaningful continuous classification of these cocycles
is unlikely due to possibility of measurable but not continuous
invariant sub-bundles.

\begin{examples} \label{general 1}
Arbitrarily close to the identity, there exist smooth cocycles 
\vskip.1cm
\noindent$
A (x)=\left[ \begin{array}{cc} \a(x) & \b \\ 0 & 1 \end{array} \right]
 \;\text{ and }\; 
B (x)=\left[ \begin{array}{cc} \a(x) & 0 \\ 0 & 1 \end{array} \right]\;
 \text{ such that} 
$
 
\begin{itemize}

\item[(i)]  $A$ and $B$ are measurably,
but not continuously cohomologous;

\item[(ii)] $A$ preserves a measurable sub-bundle 
that is not H\"older continuous;

\item[(iii)] $A$ and $B$ have conjugate periodic data,
but are not continuously cohomologous.
\end{itemize}

\end{examples}


\section{Preliminaries}

In this section we briefly introduce the main notions and results
used in this paper. 

\subsection{Anosov diffeomorphisms} 
Let $f$ be a diffeomorphism of a compact connected 
Riemannian manifold $\M$. 
It is called {\em Anosov}\, if there exist a decomposition 
of the tangent bundle $T\M$ into two invariant 
continuous sub-bundles $E^s$ and $E^u$, and constants $K>0$, 
$\kappa>0\,$ such that for all $n\in \N$, $\,{\mathbf v} \in E^s$, and 
 ${\mathbf w} \in E^u$,
$$
  \| df^n({\mathbf v}) \|  \leq K e^{-\kappa n} \| {\mathbf v} \|
     \quad\text{and}  \quad 
   \| df^{-n}({\mathbf w}) \| \leq K e^{-\kappa n} \| {\mathbf w} \|. 
$$
A diffeomorphism $f$ is called {\em transitive} if there 
exists a point in $\M$ with  dense orbit. 

The simplest  examples 
are given by {\em Anosov automorphisms}
of tori. For  a hyperbolic matrix $F$ in $SL(n,\Z)$,
the map $F:\R^n\to \R^n$ projects to an automorphism  
$f$ of the torus $\T^n=\R^n/\Z^n$,  and $f$  is clearly Anosov.
\vskip.1cm

In the rest of this section, we assume that $f$ {\it is a transitive Anosov diffeomorphism.}\, 
Abundance of periodic orbits is a key feature of 
such maps, and one of its strongest manifestations 
is  the {\it Specification Property}\, \cite[Theorem 18.3.9]{KH}:

\begin{theorem} \label{specification}
For any 
 $\e>0$ there exists a positive integer $M_\e$
such that given any collection of orbit segments
$$
  O(x_l, n_l)=\{ x_l, fx_l, \dots , f^{n_l-1}x_l\}, \quad l=1,\dots ,m,
$$
there exists a periodic point $p$ that $\e$-shadows each of the 
orbit segments with $M_\e$ iterates between consecutive ones, 
more precisely, $\,f^{n_1+\dots +n_m+mM_\e}p=p,\,$
$$
\begin{aligned}
 & \dist \,(f^i p, f^i x_1) \le \e, 
  \;\; i=0, \dots , n_1-1, \quad\text{and} \\
 & \text{for } \; l=2, \dots, m, \;\;\;
  \dist \,(f^{n_1+\dots+n_{l-1}+(l-1)M_\e+i} p, f^i x_l) \le \e, 
  \;\; i=0, \dots , n_l-1.
\end{aligned}  
  $$
\end{theorem}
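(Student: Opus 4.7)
The plan is to combine two structural properties of transitive Anosov diffeomorphisms—topological mixing on all of $\M$ and the shadowing of periodic pseudo-orbits by true periodic orbits—to build $p$ as the shadow of a suitably constructed closed pseudo-orbit. The overall strategy is: first manufacture an $\e$-pseudo-orbit that follows each $O(x_l,n_l)$ exactly, inserts a uniform-length ``connector'' of length $M_\e$ between consecutive segments, and closes up; then invoke the Anosov closing/shadowing machinery to replace it by a true periodic orbit.

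First I would establish that $f$ is topologically mixing on $\M$. By Smale's spectral decomposition, the non-wandering set splits into finitely many basic sets, each a disjoint union of cyclically permuted topologically mixing pieces. Transitivity forces a single basic set equal to $\M$, and connectedness of $\M$ rules out a nontrivial cyclic permutation, leaving $f$ topologically mixing. Next, using compactness and uniform continuity, I would cover $\M$ by finitely many $\e/2$-balls and, applying topological mixing to each ordered pair of such balls, extract a single integer $M_\e$ with the following uniform property: for any two points $y,z\in\M$ and every $N\ge M_\e$ there exists $w$ with $\dist(w,y)<\e/2$ and $\dist(f^N w,z)<\e/2$. This is the constant $M_\e$ claimed in the theorem.

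Using $M_\e$, I would then build a closed pseudo-orbit. Starting from $x_1$, follow the genuine orbit for $n_1$ steps; by the uniform mixing property just proved, choose a point $w_1$ within $\e/2$ of $f^{n_1}x_1$ whose $M_\e$-th iterate lies within $\e/2$ of $x_2$, and splice in the orbit segment $w_1,fw_1,\dots,f^{M_\e-1}w_1$; then follow the orbit of $x_2$ for $n_2$ steps, and so on. After the $m$-th segment, use one more connector of length $M_\e$ ending within $\e/2$ of $x_1$. The result is a closed $\e$-pseudo-orbit of total length $T=n_1+\dots+n_m+mM_\e$, agreeing with the genuine orbit of $x_l$ on each block of length $n_l$.

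Finally I would apply the Anosov shadowing lemma, in its version for periodic pseudo-orbits, to replace this pseudo-orbit by a genuine periodic orbit of period $T$ that $\e$-shadows it; the shadowing constant can be prearranged so that matching distances are at most $\e$, after possibly replacing the $\e/2$ above by a smaller quantity determined by the hyperbolicity constants of $f$. The shadowing point $p$ then satisfies $f^T p=p$ and the required $\e$-closeness on each block. The main obstacle is the quantitative coupling: one must choose the mixing-scale and the pseudo-orbit-scale compatibly with the shadowing constant of $f$ so that a single $M_\e$ works uniformly in $m$ and in the lengths $n_l$. This is handled by fixing once and for all a shadowing constant $\delta(\e)$ from hyperbolicity and then selecting the covering fineness, and hence $M_\e$, small enough that the connector construction yields a $\delta(\e)$-pseudo-orbit.
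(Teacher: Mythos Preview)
The paper does not prove this theorem; it is quoted as a known result with a reference to \cite[Theorem 18.3.9]{KH}. Your outline is correct and is essentially the standard argument found there: deduce topological mixing from transitivity and connectedness via the spectral decomposition, use mixing together with a finite cover by small balls to obtain a uniform transition time $M_\e$, concatenate the given orbit segments with connectors of length $M_\e$ into a periodic $\delta$-pseudo-orbit, and close it up with the Anosov shadowing/closing lemma. Your final remark about fixing the shadowing scale $\delta=\delta(\e)$ first and only then choosing the covering fineness (and hence $M_\e$) is exactly the correct order of quantifiers, and it guarantees that a single $M_\e$ works independently of $m$ and of the lengths $n_l$.
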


We will  use  the following estimate for sums of a H\"older function
 along close orbits. This estimate is  well-known, see for example 
  \cite[Proof of Lemma 19.2.2]{KH}, and follows easily from 
  exponential closeness  of the orbit segments.

For a function  $\a:\M\to \R$, we denote
\begin{equation}\label{a(x,n)}
\begin{aligned}
   &\a^+(x,n) =\a(x)+\a(fx)+ \dots +\a(f^{n-1}x), \\
   &\a^\times(x,n) \,=\a(x)\a(fx) \cdots \a(f^{n-1}x).
\end{aligned}
\end{equation} 

\begin{lemma} \label{close orbits sum}
Let $\a:\M\to \R$ be a H\"older function with H\"older
exponent $\sigma$. Then for any  sufficiently small $\e>0$ 
there exists a constant 
$\gamma$ independent of $n$ such that  
$$
  \text{if } \; \dist\,(f^ix, f^iy)\le \e, \;\;i=0, \dots , n-1, \,\text{ then }\;\;
   \left| \a^+(x,n) -  \a^+(y,n) \right|
   \le \gamma \e^\sigma.
$$
\end{lemma}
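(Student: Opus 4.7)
The plan is to reduce the estimate to a geometric series via the hyperbolic closing/synchronization phenomenon: although the hypothesis only gives the crude bound $\dist(f^i x, f^i y)\le \e$, hyperbolicity forces the two orbit segments to be \emph{exponentially} closer in the middle of the window than at the endpoints. Precisely, I will show that for some constants $C>0$ and $\kappa>0$ depending only on $f$,
\begin{equation*}
  \dist(f^i x, f^i y) \;\le\; C \e \, e^{-\kappa \min(i,\, n-1-i)}, \qquad i = 0, \dots, n-1.
\end{equation*}
Given this, the Hölder hypothesis gives $|\a(f^i x)-\a(f^i y)|\le H_\a \cdot C^\sigma \e^\sigma e^{-\kappa\sigma \min(i,\,n-1-i)}$, and summing over $i$ yields
\begin{equation*}
  |\a^+(x,n)-\a^+(y,n)| \;\le\; H_\a\, C^\sigma \e^\sigma \cdot 2\sum_{j=0}^\infty e^{-\kappa\sigma j}
  \;=\; \gamma\, \e^\sigma,
\end{equation*}
with $\gamma$ independent of $n$, as required.

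To obtain the exponential closeness, I will use the local product structure of $f$. For $\e>0$ smaller than the size of the product neighborhoods, the assumption $\dist(x,y)\le \e$ yields a unique bracket point $z=[x,y]\in W^s_{\loc}(x)\cap W^u_{\loc}(y)$, with $\dist(x,z)$ controlled by the stable component of $y$ relative to $x$ and $\dist(z,y)$ by the unstable component. The unstable component of $y$ relative to $x$ is expanded by a factor of at least $K^{-1} e^{\kappa(n-1)}$ under $f^{n-1}$, but its image $[f^{n-1}x,f^{n-1}y]$ still lies in a ball of radius $\e$ around $f^{n-1}x$ by hypothesis; hence $\dist(z,y)\le C_1 \e \, e^{-\kappa(n-1)}$. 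Pushing forward, $\dist(f^i x, f^i z)\le K e^{-\kappa i}\dist(x,z)\le C_2 \e e^{-\kappa i}$ from the stable contraction, while $\dist(f^i z, f^i y)\le K e^{-\kappa(n-1-i)}\dist(f^{n-1}z, f^{n-1}y)\le C_3 \e e^{-\kappa(n-1-i)}$ from the unstable contraction in backward time. The triangle inequality then gives the claimed bound.

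The main (essentially only) obstacle is the justification of the exponential-closeness inequality; once that is in hand, the rest is a one-line telescoping estimate plus a convergent geometric series. Since the bound is stated in Katok–Hasselblatt in the course of the proof of Liv\v{s}ic's theorem, I will either cite it directly or sketch the product-structure argument above, keeping $\e$ smaller than the size of local product neighborhoods so that all brackets are defined and the constants $C_1,C_2,C_3$ are uniform in $x,y,n$.
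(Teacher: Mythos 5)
Your proposal is correct and is exactly the argument the paper has in mind: the paper gives no proof beyond citing \cite[Proof of Lemma 19.2.2]{KH} and remarking that the estimate "follows easily from exponential closeness of the orbit segments," which is precisely the bound $\dist(f^ix,f^iy)\le C\e e^{-\kappa\min(i,\,n-1-i)}$ you establish via the local product structure before summing the geometric series.
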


\begin{corollary} \label{close orbits prod}
Let $\b:\M\to \R \setminus \{0\}$ be a H\"older function 
with H\"older exponent $\sigma$. Then for $x$ and $y$ 
as in Lemma \ref{close orbits sum} we have
$$
   e^{-\gamma \e^\sigma} \le  \b^\times(x,n) \cdot
   \left(  \b^\times(y,n)  \right)^{-1} \le  e^{\gamma \e^\sigma}.
$$
\end{corollary}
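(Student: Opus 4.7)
The plan is to reduce the multiplicative statement to the additive one of Lemma~\ref{close orbits sum} by taking logarithms. Set $\a(x) = \log |\b(x)|$. Since $\b$ is continuous, nonzero, and defined on the compact manifold $\M$, there exist constants $0<m\le M$ with $m\le |\b(x)|\le M$ for all $x\in\M$. On the interval $[m,M]$ the logarithm is Lipschitz, and a H\"older function composed with a Lipschitz function is again H\"older with the same exponent. Hence $\a:\M\to\R$ is H\"older continuous with exponent $\sigma$, with H\"older constant controlled by that of $\b$ and by $1/m$.

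Next I would apply Lemma~\ref{close orbits sum} to $\a$: for $\e>0$ sufficiently small, there is a constant $\gamma$, independent of $n$, such that whenever $\dist(f^ix,f^iy)\le \e$ for $i=0,\dots,n-1$,
$$
\left|\a^+(x,n)-\a^+(y,n)\right|\le \gamma\e^\sigma.
$$
By construction, $\a^+(x,n)=\sum_{i=0}^{n-1}\log|\b(f^ix)|=\log|\b^\times(x,n)|$, so the above inequality is exactly
$$
-\gamma\e^\sigma \;\le\; \log\frac{|\b^\times(x,n)|}{|\b^\times(y,n)|} \;\le\; \gamma\e^\sigma.
$$

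It remains to remove the absolute values, which is the only slightly delicate point. Since $\b$ is continuous and nonzero on a compact manifold, there exists $\delta>0$ such that $\b$ has constant sign on every $\delta$-ball in $\M$. Taking $\e<\delta$ (shrinking $\e$ if necessary, which only forces a possibly larger $\gamma$), we see that $\b(f^ix)$ and $\b(f^iy)$ have the same sign for each $i=0,\dots,n-1$, so $\b^\times(x,n)$ and $\b^\times(y,n)$ have the same sign and their ratio is positive. Thus
$$
e^{-\gamma\e^\sigma}\;\le\;\b^\times(x,n)\cdot\bigl(\b^\times(y,n)\bigr)^{-1}\;\le\; e^{\gamma\e^\sigma},
$$
as claimed. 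The main (and only) obstacle is the sign control, and it is resolved by compactness of $\M$ together with continuity and nonvanishing of $\b$; everything else is an immediate translation of Lemma~\ref{close orbits sum} through the logarithm.
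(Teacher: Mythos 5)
Your proof is correct and is exactly the argument the paper intends (the paper states the corollary without proof as an immediate consequence of Lemma~\ref{close orbits sum} applied to $\log|\b|$). The only remark worth making is that the sign step is even simpler than your $\delta$-ball argument: since $\M$ is assumed connected and $\b$ is continuous and nonvanishing, $\b$ has constant sign on all of $\M$, so $\b^\times(x,n)$ and $\b^\times(y,n)$ automatically have the same sign.
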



\subsection{Liv\v{s}ic Theorems  \cite{Liv1, Liv2}}
Let $\a:\M\to \R$ be a H\"older function. 

\begin{theorem}  \label{Liv main} 
If  $\,\a^+(p,n)=0$ whenever $f^n p=p$,
 then there exists a H\"older 
function $\va$ such that   $\a(x)=\va(fx)-\va(x)$.
Moreover,  the conclusion still holds
if  $\a^+(p,n)=0$ for every periodic point $p$ in a non-empty 
open $f$-invariant set. 
\end{theorem}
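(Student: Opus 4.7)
The plan is to follow Liv\v{s}ic's classical construction: build $\va$ as a coboundary along a single dense orbit, prove it is uniformly H\"older there, then extend by continuity. Since $f$ is transitive, pick $x_0\in\M$ with dense forward orbit and define
$$
\va(f^n x_0) := \a^+(x_0,n), \qquad n\ge 0.
$$
The identity $\a(f^n x_0) = \va(f^{n+1} x_0) - \va(f^n x_0)$ is then tautological along this orbit, so the entire theorem reduces to showing that $\va$ is H\"older continuous on $\{f^n x_0\}_{n\ge 0}$ with a modulus independent of $n$.

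For this, suppose $y=f^n x_0$ and $z=f^m x_0$ with $m>n$ and $\dist(y,z)\le \e$ for small $\e$. Setting $N=m-n$, one has $\va(z)-\va(y) = \a^+(y,N)$ and $\dist(y,f^N y)=\dist(y,z)\le \e$. The Anosov closing lemma then produces a periodic point $p$ with $f^N p = p$ that $K\e$-shadows the orbit segment $y,fy,\dots,f^{N-1}y$, with a shadowing constant $K$ independent of $N$. Applying Lemma \ref{close orbits sum} to these two shadowing segments gives, with $\sigma$ the H\"older exponent of $\a$,
$$
\bigl|\a^+(y,N) - \a^+(p,N)\bigr| \le \gamma (K\e)^\sigma,
$$
and since $\a^+(p,N)=0$ by hypothesis, this yields a uniform H\"older bound $|\va(z)-\va(y)| \le C\,\dist(y,z)^\sigma$. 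Uniform H\"older continuity lets $\va$ extend to a H\"older function on $\M=\overline{\{f^n x_0\}}$, and the coboundary equation $\a = \va\circ f - \va$ extends by density.

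The main obstacle is the closing step: one needs a periodic point of \emph{exactly} period $N$ whose orbit shadows $y,\dots,f^{N-1}y$ with a constant independent of $N$. This is where hyperbolicity is essential -- it follows from the local product structure of $\M$ together with exponential contraction of $df$ on $E^s$ and $E^u$, via intersection of local stable and unstable manifolds. Once the uniform shadowing is in hand, the H\"older estimate is automatic and the extension is purely topological.

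For the ``moreover'' statement, let $U$ be a non-empty open $f$-invariant set on which the hypothesis holds at periodic points. Since any point with dense forward orbit lying in $\M\setminus U$ would force $\M\setminus U=\M$ (as $\M\setminus U$ is closed and $f$-invariant), we may choose $x_0\in U$ with dense orbit; invariance of $U$ keeps the entire orbit inside $U$. For $\e$ small enough, the shadowing periodic point $p$ produced above is $K\e$-close to points of the orbit and therefore also lies in $U$, so $\a^+(p,N)=0$ by hypothesis, and the argument above goes through verbatim.
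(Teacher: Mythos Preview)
Your construction for the main statement is the standard Liv\v{s}ic argument and is correct; the paper does not supply its own proof here but simply cites \cite{Liv1,Liv2}.

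The gap is in the ``moreover'' clause. You claim that because the shadowing periodic point $p$ is $K\e$-close to the orbit $\{f^n x_0\}\subset U$, it lies in $U$. But that orbit is \emph{dense} in $\M$, so if $U\ne\M$ there is no uniform neighbourhood of the orbit contained in $U$: points $f^n x_0$ come arbitrarily close to $\M\setminus U$. Concretely, take $U=\M\setminus\{q_0\}$ for a fixed point $q_0$ of $f$; when $y=f^n x_0$ and $z=f^m x_0$ both happen to lie very near $q_0$, the closing lemma may well return $p=q_0\notin U$, and the hypothesis gives no information about $\a^+(q_0,N)$. No single choice of ``$\e$ small enough'' works, because the required smallness depends on $\dist(y,\M\setminus U)$, which is not bounded below along the dense orbit.

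The paper's route (stated after the theorem) is different: use the Specification Property to upgrade the hypothesis first, showing that $\a^+(p,n)=0$ on periodic points in $U$ forces $\a^+(q,m)=0$ at \emph{every} periodic $q$, and then invoke the main statement. Fix once and for all a periodic $p_0=f^{n_0}p_0\in U$ (periodic points are dense) and $\e>0$ with $B_\e(p_0)\subset U$. Given any periodic $q=f^m q$, apply Theorem~\ref{specification} to the segments $\{q,\dots,f^{jm-1}q\}$ and $\{p_0,\dots,f^{n_0-1}p_0\}$ to obtain a periodic $r$ that $\e$-shadows both. Some iterate of $r$ lies in $B_\e(p_0)\subset U$, hence by invariance $r\in U$ and $\a^+(r,\cdot)=0$. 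Lemma~\ref{close orbits sum} together with $\a^+(p_0,n_0)=0$ then gives $|j\,\a^+(q,m)|\le C$ with $C$ independent of $j$, forcing $\a^+(q,m)=0$. The crucial difference from your attempt is that $\e$ is chosen relative to a \emph{single fixed} point $p_0\in U$, not relative to arbitrary points of a dense orbit.
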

 
The stronger version was proved in \cite[Section 5]{Liv2}.
Alternatively, one can show using the Specification Property  
that the weaker assumption implies that $\a^+(p,n)=0$ for 
{\em all} periodic points.
 \vskip.1cm

 \begin{theorem} \label{Liv meas}
Let $\mu$ be an ergodic probability measure on $\M$ with   
full support and local product structure. If $\va$ is
a $\mu$-measurable function  such that   $\a(x)=\va(fx)-\va(x)$,
then $\va$ is H\"older, more precisely, $\va$ coincides 
on a set of full measure with a H\"older 
function $\tilde \va$ such that $\a(x)=\tilde \va(fx)-\tilde \va(x)$
for all $x$.
\end{theorem}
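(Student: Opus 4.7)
The plan is to combine Luzin's theorem with the hyperbolic structure of $f$ and the local product structure of $\mu$ to upgrade $\varphi$ to a H\"older function. First, for any $\epsilon>0$ I would fix, via Luzin's theorem, a compact set $K\subset\M$ with $\mu(K)>1-\epsilon$ on which $\varphi$ is uniformly continuous. By the Birkhoff ergodic theorem applied to $\chi_K$, the set $K'$ of points whose forward and backward orbits meet $K$ with density exceeding $1-\epsilon$ has full $\mu$-measure. Choosing $\epsilon<1/2$ ensures that whenever $x,y\in K'$, the sets $\{n:f^n x\in K\}$ and $\{n:f^n y\in K\}$ have positive density intersection, so there exist arbitrarily large $n_j$ with $f^{n_j}x, f^{n_j}y\in K$ simultaneously.

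Next I would establish the H\"older coboundary formula along stable and unstable leaves. Iterating $\alpha=\varphi\circ f-\varphi$ yields
\begin{equation*}
\varphi(x)-\varphi(y)=\varphi(f^n x)-\varphi(f^n y)-\sum_{k=0}^{n-1}\bigl[\alpha(f^k x)-\alpha(f^k y)\bigr].
\end{equation*}
For $y\in W^s(x)$ one has $\dist(f^k x,f^k y)\le Ce^{-\kappa k}\dist(x,y)$, and since $\alpha$ is $\sigma$-H\"older the sum converges absolutely as $n\to\infty$ to a quantity $S^s(x,y)$ with $|S^s(x,y)|\le C'\dist(x,y)^\sigma$. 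Picking $x,y\in K'$ on a common local stable leaf and taking $n=n_j\to\infty$ with $f^{n_j}x,f^{n_j}y\in K$, uniform continuity of $\varphi|_K$ forces $\varphi(f^{n_j}x)-\varphi(f^{n_j}y)\to 0$, giving the leafwise identity $\varphi(x)-\varphi(y)=-S^s(x,y)$. The symmetric argument applied to $f^{-1}$ produces a H\"older function $S^u$ with $\varphi(x)-\varphi(y)=S^u(x,y)$ along local unstable leaves.

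The third step is to patch these identities using local product structure. For $x,y$ sufficiently close let $z=[x,y]\in W^s_{\mathrm{loc}}(x)\cap W^u_{\mathrm{loc}}(y)$ be the bracket point. Because $\mu$ has local product structure, the restriction of $\mu$ to a small product box disintegrates (up to bounded densities) as a product of conditional measures on local stable and unstable leaves. A Fubini argument then yields a set $\Omega\subset K'$ of full $\mu$-measure such that for $x,y\in\Omega$ one can arrange $z=[x,y]$ to lie on leaves along which both identities above hold; consequently
\begin{equation*}
\varphi(x)-\varphi(y)=-S^s(x,z)+S^u(z,y),
\end{equation*}
bounded by a constant times $\dist(x,z)^\sigma+\dist(z,y)^\sigma\le C''\dist(x,y)^\sigma$. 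Thus $\varphi$ is $\sigma$-H\"older on the full-measure set $\Omega$, and since $\mu$ has full support, $\Omega$ is dense in $\M$, so $\varphi|_\Omega$ extends uniquely to a H\"older function $\tilde\varphi$ on $\M$. Both sides of $\alpha=\tilde\varphi\circ f-\tilde\varphi$ are continuous and agree on the dense set $\Omega\cap f^{-1}\Omega$, hence everywhere.

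The main obstacle will be the bracket-patching step: one has to justify that the point $z=[x,y]$ can genuinely be chosen inside the ``good'' set where the leafwise coboundary identities hold, for $\mu$-generic $x,y$. This is precisely where the hypothesis of local product structure is crucial rather than mere ergodicity, since it is exactly this structure that allows a Fubini-style interchange between leafwise ``almost every'' statements on stable and unstable conditionals and global $\mu$-almost everywhere statements on a product neighborhood.
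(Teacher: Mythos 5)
The paper does not prove this theorem; it is stated as a preliminary and cited to Liv\v{s}ic [Liv1, Liv2], so there is no in-paper proof to compare against. Your outline follows the standard strategy in the literature (Luzin + Birkhoff + leafwise telescoping + local product structure), and several steps are correct and well formulated: in particular, the observation that the leafwise identity $\varphi(x)-\varphi(z)=-S^s(x,z)$ holds for \emph{every} pair $x,z\in K'$ on a common stable leaf (not merely a.e.), because two density-$(1-\epsilon)$ return sets with $\epsilon<1/2$ intersect infinitely often, is exactly the right reduction, and the geometric-series estimate giving $|S^s(x,y)|\le C'\dist(x,y)^\sigma$ is fine.

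The gap you flag in the bracket-patching step is, however, genuine and your Fubini sketch does not close it. Fubini applied to $K'$ in a product box gives ``for $\mu\times\mu$-a.e.\ pair $(x,y)$ the bracket $[x,y]$ lies in $K'$,'' but this is not the same as producing a single full-measure set $\Omega$ such that $[x,y]\in K'$ for \emph{all} $x,y\in\Omega$: a full-measure subset of a product box need not contain any product $\Omega\times\Omega$ of full-measure sets (consider the complement of a measure-preserving graph). Because $z=[x,y]$ is determined by $x$ and $y$, one cannot ``arrange'' it, so the claim that $\varphi$ is $\sigma$-H\"older \emph{on} a full-measure set $\Omega$ does not follow, and consequently the density-extension step is unjustified as written. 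The standard repair reverses the logic: choose one $\mu$-generic base point $x_0\in K'$ whose local stable and unstable conditional measures are concentrated on $K'$ (such $x_0$ exists by the same Fubini argument), and \emph{define} the H\"older candidate directly by
\begin{equation*}
\tilde\varphi(w)\;=\;\varphi(x_0)\,-\,\Delta^u(x_0,z_w)\,-\,\Delta^s(z_w,w),
\qquad z_w = W^u_{\mathrm{loc}}(x_0)\cap W^s_{\mathrm{loc}}(w),
\end{equation*}
where $\Delta^s,\Delta^u$ are your leafwise sums. This is H\"older in $w$ because $\Delta^s$, $\Delta^u$ and the bracket map are. One then verifies $\tilde\varphi=\varphi$ a.e.\ by noting that for a.e.\ $w$ both $w$ and $z_w$ lie in $K'$, so the two leafwise identities at $(x_0,z_w)$ and $(z_w,w)$ apply. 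Patching these local definitions over a finite cover (using a.e.\ agreement with $\varphi$ for consistency) yields the global H\"older version, after which $\alpha=\tilde\varphi\circ f-\tilde\varphi$ holds a.e.\ and hence everywhere by continuity. This avoids ever having to realize $\varphi$ itself as H\"older on a single measurable set.
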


For a positive H\"older  function $\b$, Theorems \ref{Liv main} and  \ref{Liv meas} yield multiplicative counterparts:
 if $\b^\times(p,n)=1$ whenever  $f^n p=p$, then
$\b(x)=\va(fx)/\va(x)$ for a H\"older 
function $\va$; and  a measurable solution $\va$ of
the equation $\b(x)=\va(fx)/\va(x)$ is H\"older.


\subsection{Conformal structures and conformal matrices} 
\label{structure}

A conformal  structure $\s$ on $\R^2$ is a class of proportional 
inner products $\{ \langle \mathbf u, \mathbf v\rangle _\s\}$. 
It can be identified  with a real 
symmetric positive definite matrix $S$ with determinant 1 via
$$
\langle \mathbf u, \mathbf v \rangle _\s =
\langle S \mathbf u, \mathbf v\rangle ,\;\text
{ where }\langle \cdot , \cdot \rangle \text{ is the standard inner product.}
$$
The {\it standard}\,  conformal structure is given by 
$\langle \cdot , \cdot \rangle$.
The  structure $\s$ can also be viewed as a class of proportional 
ellipses $\{ E_\s\}$ given by the vectors of the same length
with respect to $\langle \mathbf u, \mathbf v\rangle _\s$.
For an invertible linear map $A:\R^2 \to \R^2$,  we denote by 
$A[\s]$ the conformal structure corresponding to the class of 
ellipses $\{ AE_\s\}$, i.e. the matrix of $A[\s]$
is $\,\det (A^*A)\cdot  (A^{-1})^*S (A^{-1})$.

Suppose that for each $x$ in $\M$ we have a conformal structure 
$\s(x)$. This defines a  conformal structure $\s$ on $\M \times \R^2$.
Let $A:\M\to GL(2,\R)$ be a cocycle. We say that $\s$
is $A$-invariant if $A(x)[\s (x)]=\s (fx)$ for all $x$.

A matrix  is called {\it conformal}\, if it preserves the
standard conformal structure, i.e it is a non-zero scalar 
multiple of an orthogonal matrix.


\section{Proof of Theorem \ref{reduction}} 

The following statement serves as a motivation
and plays an important role in our proofs.
It is an immediate  corollary of 
Propositions 2.1, 2.3, 2.6 and 2.7  in \cite{KaS3}.

\begin{proposition} \label{properties}
Suppose that a cocycle $A$ satisfies Condition
\ref{assumptions}. Then 
\begin{itemize}

\item [(i)] Any measurable $A$-invariant sub-bundle of $\E$  
 is H\"older continuous;

\item [(ii)] Any $A$-invariant measurable  conformal 
structure on $\E$ is H\"older continuous; 

\item[(iii)] The cocycle $A$ preserves either a H\"older 
continuous  sub-bundle of $\E$ or  a H\"older continuous 
conformal structure on $\E$.
\end{itemize}
\end{proposition}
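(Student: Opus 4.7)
The plan is to use the periodic-point data as the source of canonical invariant objects. Condition \ref{assumptions} forces $\A(p,n)$ at every periodic point $p=f^n p$ to be either a nontrivial Jordan block or, in an appropriate basis, a scalar multiple of an orthogonal matrix; in the Jordan case there is a unique invariant line at $p$, and in the diagonalizable case there is a preserved conformal structure at $p$. Thus the periodic data supply canonical candidates for an invariant sub-bundle and/or an invariant conformal structure on $\mathrm{Per}(f)$. The strategy for all three items is to extend these candidates H\"older continuously from $\mathrm{Per}(f)$ to $\M$ using the Specification Property (Theorem \ref{specification}) together with Lemma \ref{close orbits sum} and Corollary \ref{close orbits prod}, and then to identify any given measurable invariant object with the continuous extension via Theorem \ref{Liv meas}.

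For (i), let $L$ be a measurable $A$-invariant sub-bundle. At every periodic $p$, $L(p)$ is $\A(p,n)$-invariant and therefore lies in the (at most two-element) set of $\A(p,n)$-invariant lines; on periodic orbits where this set is a singleton the value of $L$ is determined by periodic data alone. For any $x\in\M$ I would apply Specification to produce periodic orbits that shadow a long orbit segment of $x$, and combine the matrix-product closeness from Corollary \ref{close orbits prod} with the projective contraction of $\A(\cdot,n)$ near its dominant invariant line to obtain a Cauchy sequence of candidate lines at $x$ with H\"older-controlled modulus. This yields a H\"older continuous $A$-invariant sub-bundle $\tilde L$. To conclude I would express the angle between $L$ and $\tilde L$ as a measurable solution of a scalar cohomological equation driven by the action of $A$ on the projective line, and apply Theorem \ref{Liv meas} to deduce that this angle is H\"older; since it must take discrete values at periodic points, $L=\tilde L$ almost everywhere.

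For (ii), I would identify the space of conformal structures on $\R^2$ of determinant one with the symmetric space $SL(2,\R)/SO(2)\cong \mathbb H^2$. The rescaled cocycle $\bar A:=A/\sqrt{|\det A|}$ takes values in $PSL(2,\R)$ and acts on $\mathbb H^2$ by isometries, so an $A$-invariant measurable conformal structure becomes a measurable $\mathbb H^2$-valued function satisfying a cocycle equation. Condition \ref{assumptions} forces $\bar A(p,n)$ to be the identity, elliptic, or parabolic, and in the first two cases it has a fixed point in $\mathbb H^2$ which must coincide with $\s(p)$. The same Specification plus closeness argument as in (i), now carried out in the hyperbolic metric, yields a H\"older continuous $\mathbb H^2$-valued extension, and Theorem \ref{Liv meas} identifies it with $\s$.

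For (iii) the dichotomy is the main obstacle. If $A$ preserves no H\"older continuous sub-bundle, then by (i) it preserves no measurable one either. The induced projective cocycle on $\M\times \R P^1$ admits an invariant family of fiberwise probability measures $\{\nu_x\}$ by compactness and Krylov--Bogolyubov, and the absence of an invariant measurable section combined with the periodic-data constraints excludes atomic concentration on any single line. A standard conformal barycenter map, $PSL(2,\R)$-equivariantly sending a sufficiently diffuse probability on $\R P^1=\partial\mathbb H^2$ to a point of $\mathbb H^2$, then converts $\{\nu_x\}$ into an $A$-invariant measurable conformal structure, which (ii) promotes to H\"older regularity. The delicate ingredient --- and the reason the proposition invokes \cite{KaS3} --- is the quantitative control showing that $\nu_x$ cannot put mass $\ge 1/2$ on a single line; without Condition \ref{assumptions} the barycenter construction could fail.
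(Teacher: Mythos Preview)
The paper does not prove this proposition; it simply quotes it as an immediate corollary of Propositions~2.1, 2.3, 2.6 and 2.7 of \cite{KaS3}. So the comparison is with the approach of that reference.

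There is a genuine gap in your argument for (i). You appeal to ``the projective contraction of $\A(\cdot,n)$ near its dominant invariant line'' to produce a Cauchy sequence of candidate lines, but under Condition~\ref{assumptions} no such contraction exists: the eigenvalues of every $\A(p,n)$ have equal modulus, so the induced map on $\R P^1$ is elliptic, parabolic, or the identity --- never hyperbolic --- and there is no dominant line to contract toward. For the same reason the periodic data do not pin down a candidate sub-bundle: at a given periodic point $\A(p,n)$ may have complex eigenvalues (no real invariant line) or be a scalar (every line invariant), and in any case $L$ is defined only almost everywhere and carries no information at individual periodic points. A parallel issue affects your sketch for (ii): when $\bar\A(p,n)$ is parabolic it has no fixed point in $\mathbb H^2$, only on the boundary, so ``$\s(p)$ must coincide with the fixed point'' fails.

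The engine in \cite{KaS3} is not projective contraction but a uniform quasiconformality estimate: Condition~\ref{assumptions} forces, for every $\epsilon>0$, a bound $\|\A(x,n)\|\cdot\|\A(x,n)^{-1}\|\le C_\epsilon e^{\epsilon|n|}$ uniformly in $x$ and $n$. This subexponential distortion, played against the exponential rates of $f$ along stable and unstable leaves, produces H\"older continuous stable and unstable holonomies for the cocycle, and it is holonomy-invariance together with local product structure of the measure that upgrades a measurable invariant sub-bundle or conformal structure to a H\"older one. Your barycenter outline for (iii) is on the right track and is essentially what \cite{KaS3} does, but it too rests on the quasiconformality estimate (both to rule out atomic concentration and to feed into (ii)), not merely on the atomic/nonatomic dichotomy you isolate at the end.
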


The following lemma shows that  the number of 
invariant sub-bundles is an invariant of  cohomology
of the corresponding regularity. Since a continuous conjugacy
also preserves orientability of invariant sub-bundles, it follows 
that cocycles of  different types are not H\"older cohomologous.

\begin{lemma} \label{invariant}
Suppose that $A_2(x)=C(fx) A_1(x)C(x)^{-1}$ for two cocycles 
$A_1$ and $A_2$ and a measurable (H\"older) function 
$C:\M\to GL(2,\R)$.
\begin{itemize}

\item[(i)] If $A_1$ preserves a measurable (H\"older)
sub-bundle $\V_1$, then $A_2$ preserves a measurable 
(H\"older) sub-bundle $\V_2=C\V_1$.

\item[(ii)] If $A_1$ preserves a measurable (H\"older)
conformal structure $\s_1$, then $A_2$ preserves a measurable 
(H\"older) conformal structure  $\s_2=C[\s_1]$.
\end{itemize}
\end{lemma}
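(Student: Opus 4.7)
The plan is a direct verification from the definitions of invariant sub-bundle and of invariant conformal structure, using only the identity $A_2(x)C(x)=C(fx)A_1(x)$ that is equivalent to the cohomology relation. Both parts are pure bookkeeping, and the regularity claims come for free from the regularity of $C$ together with the explicit formulas for how $GL(2,\R)$ acts on lines in $\R^2$ and on conformal structures; I do not anticipate any serious obstacle.

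For part (i) I would define $\V_2(x):=C(x)\V_1(x)$, the image of the line $\V_1(x)$ under the invertible map $C(x)$. The $A_1$-invariance $A_1(x)\V_1(x)=\V_1(fx)$ combined with the cohomology identity gives
\begin{equation*}
A_2(x)\V_2(x)=C(fx)A_1(x)C(x)^{-1}C(x)\V_1(x)=C(fx)\V_1(fx)=\V_2(fx),
\end{equation*}
so $\V_2$ is $A_2$-invariant. Viewing a one-dimensional sub-bundle as a section of the projectivized bundle $\M\times\mathbb{RP}^1$, pointwise left multiplication by $C$ is given by the smooth $GL(2,\R)$-action on $\mathbb{RP}^1$, so it sends measurable sections to measurable sections and H\"older sections to H\"older sections, giving the regularity claim for $\V_2$.

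For part (ii) I would set $\s_2(x):=C(x)[\s_1(x)]$ and exploit the functoriality $(BA)[\s]=B[A[\s]]$, which is immediate either from the geometric picture of pushing ellipse classes or from the matrix formula of Section~\ref{structure} together with $(BA)^{-1}=A^{-1}B^{-1}$. Then
\begin{equation*}
A_2(x)[\s_2(x)]=(A_2(x)C(x))[\s_1(x)]=(C(fx)A_1(x))[\s_1(x)]=C(fx)[\s_1(fx)]=\s_2(fx),
\end{equation*}
which is precisely the $A_2$-invariance of $\s_2$. Regularity of $\s_2$ follows pointwise from the same matrix formula, whose entries are rational functions of the entries of $C(x)$ and of the matrix of $\s_1(x)$ with denominator a positive power of $\det C(x)$; this denominator is bounded away from zero in the H\"older case by continuity of $C$ on the compact manifold $\M$, and measurability is trivially preserved in the measurable case.
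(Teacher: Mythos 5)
Your proof is correct. The paper states Lemma~\ref{invariant} without a written proof, since the formulas $\V_2=C\V_1$ and $\s_2=C[\s_1]$ are already supplied in the statement and the verification is immediate; your direct computation from $A_2(x)C(x)=C(fx)A_1(x)$, together with the observation that multiplication by the measurable (H\"older) matrix $C$ preserves regularity of sections of $\mathbb{RP}^1$ and of conformal structures, is exactly the intended argument.
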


\noindent {\bf I-II$'$.} Suppose that  $A$ 
preserves a non-orientable  H\"older continuous sub-bundle $\V$. 

\begin{lemma} \label{double} 
There exists 
a double cover $\tilde f: \tilde \M \to \tilde \M$ of $f$ 
such that the lift $\tilde A$ of $A$ preserves an orientable 
sub-bundle $\tilde \V$ that projects to $\V$.

\end{lemma}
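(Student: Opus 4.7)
The plan is to take $\tilde\M$ to be the \emph{orientation double cover} of the H\"older continuous line bundle $\V$: as a set,
$$
\tilde\M \;=\; \{(x,o)\,:\, x\in\M,\; o \text{ is an orientation of } \V_x\},
$$
with projection $P(x,o)=x$. The orientation bundle of $\V$ is a $\Z/2$-principal bundle over $\M$, so its total space $\tilde\M$ is a double cover of $\M$, carrying the unique smooth structure for which $P$ is a local diffeomorphism. Non-orientability of $\V$ means that the orientation bundle admits no global section, so $\tilde\M$ is connected.

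I lift $f$ to $\tilde f:\tilde\M\to\tilde\M$ by
$$
\tilde f(x,o) \;=\; \bigl(fx,\; A(x)_* o\bigr),
$$
where $A(x)_* o$ denotes the orientation of $\V_{fx}=A(x)\V_x$ obtained by pushing $o$ forward along the linear isomorphism $A(x)|_{\V_x}$. In any local orientation trivialization of $\V$ near $x$ and near $fx$ the second coordinate of $\tilde f$ is locally constant (the sign of a nonvanishing continuous scalar), so $\tilde f$ inherits the $C^2$ regularity and the hyperbolic splitting of $f$ and is therefore Anosov. For transitivity, each $f$-periodic point $p$ has two $P$-preimages, each of which is $\tilde f$-periodic of period at most twice that of $p$; hence $\tilde f$-periodic points are dense and $\Omega(\tilde f)=\tilde\M$. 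The pieces of the Smale spectral decomposition of $\tilde f$ are then disjoint closed subsets covering $\tilde\M$, hence clopen, and connectedness of $\tilde\M$ forces a single piece, so $\tilde f$ is transitive.

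Finally, define $\tilde\V\subset\tilde\M\times\R^2$ by $\tilde\V_{(x,o)}=\V_x$ and declare $o$ to be its positive orientation at $(x,o)$; this is a H\"older continuous orientable sub-bundle projecting to $\V$. The lifted generator $\tilde A(y)=A(P(y))$ satisfies $\tilde A(y)\,\tilde\V_y=A(x)\V_x=\V_{fx}=\tilde\V_{\tilde f(y)}$ for $y=(x,o)$, so $\tilde A$ preserves $\tilde\V$; and by the definition of $\tilde f$ the positive orientation at $y$ is sent to the positive orientation at $\tilde f(y)$. The one genuinely nontrivial point is the transitivity argument, which rests on the connectedness of $\tilde\M$ forced by non-orientability of $\V$; the remaining verifications are routine bookkeeping.
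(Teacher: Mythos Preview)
Your proof is correct and follows essentially the same approach as the paper: both construct the orientation double cover of the line bundle $\V$, you by describing it concretely as the set of pairs $(x,o)$ and writing down the lift $\tilde f(x,o)=(fx,A(x)_*o)$ explicitly, the paper by passing through the index-two subgroup $\ker\rho\subset\pi_1(\M)$ determined by orientability of $\V$ along loops and then arguing that $f$ preserves this subgroup. You additionally verify that $\tilde f$ is a transitive Anosov diffeomorphism, a point the paper does not address in the lemma but tacitly relies on in later arguments.
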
 

\begin{proof}
We denote by $\pi_1(\M)$ the fundamental group of $\M$ and 
consider a homomorphism $\rho :\pi_1(\M) \to \Z/2\Z=\{1, -1\}$ 
defined as follows:  
$\rho(\gamma)=1$ if $\V$ 
is orientable along $\gamma$ and $-1$ otherwise.
Then $\ker \rho$ is a normal subgroup of index 2 in $\pi_1(\M)$,
and there exists a double cover $P: \tilde \M \to \M$ such that 
$P_* (\pi_1(\tilde \M))=\ker \rho$. 
The double cover has the property that the lift of a loop 
$\gamma$ in $\M$ is also a loop in $\tilde \M$ if and only if
$\V$ is orientable along $\gamma$. This property
of $\gamma$ is preserved by $f$. Indeed, the extension
$(x, v) \mapsto (fx, A(x)v)$ gives a homeomorphism 
between the restrictions of $\E$ to $\gamma$ and 
$f \circ \gamma$ which maps $\V$ to $\V$ and hence
preserves orientability. It follows that $f$ lifts to 
$\tilde f: \tilde \M \to \tilde \M$.  The  lift $\tilde \V (y) 
= \V(Py)$ of 
$\V$ to $\tilde \E =\tilde \M \times \R^2$ is orientable
and is invariant under the lift $\tilde A (y) = A(Py)$.
\end{proof}

We choose a H\"older continuous unit vector fields 
${\mathbf v}_1$ in $\tilde \V$ and $\bar {\mathbf v}_2$ 
orthogonal to ${\mathbf v}_1$.
Then for the change of basis matrix $\bar C(y)$
from the standard basis to $\{{\mathbf v}_1(y),\bar {\mathbf v}_2(y)\}$,
\begin{equation}\label{bar B}
\tilde B(y)\; \overset{\text{def}}{=}\; 
\bar C(\tilde fy)\, \tilde A(y)\, \bar C^{-1}(y)
\quad\text {is upper triangular.}
\end{equation}
Suppose that $P(y_1)=P(y_2)$ for $y_1,y_2 \in\tilde \M$.
 It follows from the construction of the 
double cover that  ${\mathbf v}_1(y_1)=-{\mathbf v}_1(y_2)$, 
hence $\bar {\mathbf v}_2(y_1)=-\bar {\mathbf v}_2(y_2)$ 
and $\bar C(y_1)=-\bar C(y_2)$. 
Thus $\tilde B(y_1)=\tilde B(y_2)$, and $\tilde B$ 
is the lift of a cocycle $ B$ on $\M$ of the  form
\begin{equation} \label{hat B}
 B(x)=k(x)\left[ \begin{array}{cc} 1 & * \\ 0 & g(x) \end{array} \right],
\quad\text{where }\;k(x)\ne 0 \;\text{ and }\; g(x)>0.
\end{equation}

The lifts $\tilde A$  of $A$ and $\tilde B$ of $ B$  are 
H\"older cohomologous via $\bar C$.  The conjugacy 
$\bar C$  does not project to $\M$ in $GL(2,\R)$, 
but does in $GL(2,\R)/\{\pm \Id\}$. It follows that $A$ and 
$B$ have the same number of invariant sub-bundles. 
Also, for any periodic point $p=f^np$ the eigenvalues 
$k^\times(p,n)$ and $k^\times(p,n)g^\times(p,n)$ of the matrix
$\B (p,n)$
have the same modulus, and hence $g^\times(p,n)=1$. 
 By Theorem \ref{Liv main}, there exists a
H\"older continuous function $\va$ such that $g(x)=\va(fx)/\va(x)$.
Rescaling the second coordinate by a factor $1/\va(x)$,
we obtain a cocycle $A'(x)$ cohomologous  to $B (x)$
 of the  form 
 \begin{equation} \label{B(x)}
  A'(x) \;=\;  k(x)\left[ \begin{array}{cc} 1 & \a(x) \\ 0 & 1 \end{array} \right],
  \quad \text{where }\; k(x)\ne 0.
 \end{equation}
The lifts $\tilde A$ of $A$ and $\tilde A'$ of $A'$ to $\tilde \M$ 
are cohomologous via a H\"older continuous function 
$\tilde C:\tilde \M \to GL(2,\R)$.
It is clear from the construction that 
\begin{equation} \label{C pm}
\text{if }\, P(y_1)=P(y_2) \,\text{ for }\,y_1\ne y_2\in \tilde \M,
\,\text{ then }\,\tilde C(y_1)=-\tilde C(y_2).
\end{equation}

\vskip.1cm

If $A$ has an orientable  invariant sub-bundle, we  obtain 
$B$ as in \eqref{hat B} without passing to a double cover. 
In this case $B$, and hence $A'$, are  H\"older 
cohomologous to $A$.

 The following lemma completes the analysis of the cases I-II$'$.

\begin{lemma} \label{number of sub-bundles}
For $A'$ as in \eqref{B(x)}
the following statements are equivalent
\begin{itemize}
\item[(i)] $A'$ preserves at least two H\"older continuous sub-bundles; 
\item[(ii)] $\a$ is  H\"older cohomologous to 0;
\item[(iii)] $A'$ is H\"older cohomologous to a scalar cocycle $k(x)\Id\,$;
\item[(iv)]  $A'$ preserves infinitely many H\"older continuous sub-bundles.
\end{itemize}
 \end{lemma}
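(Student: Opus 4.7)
The plan is to prove the cyclic chain (ii)$\Rightarrow$(iii)$\Rightarrow$(iv)$\Rightarrow$(i)$\Rightarrow$(ii); the first three implications are routine and all the substance lies in (i)$\Rightarrow$(ii).

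For (ii)$\Rightarrow$(iii), given a H\"older $\va$ with $\a=\va\circ f-\va$, I would conjugate by $C(x)=\left[\begin{array}{cc} 1 & \va(x) \\ 0 & 1 \end{array}\right]$; a one-line matrix product gives $C(fx)(k(x)\Id)C(x)^{-1}=A'(x)$. For (iii)$\Rightarrow$(iv), the scalar cocycle $k(x)\Id$ preserves the constant (hence H\"older) sub-bundle $\M\times L$ for every line $L\subset \R^2$, and Lemma \ref{invariant}(i), applied with the H\"older conjugacy from (iii), transports each of these to a H\"older $A'$-invariant sub-bundle, yielding infinitely many. The implication (iv)$\Rightarrow$(i) is immediate.

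The heart of the lemma is (i)$\Rightarrow$(ii). The horizontal line field $\V_1=\M\times \R e_1$ is $A'$-invariant because $A'(x)$ is upper triangular, so by (i) there exists a second H\"older continuous $A'$-invariant sub-bundle $\V_2\ne \V_1$. The coincidence locus $Z=\{x\in\M:\V_2(x)=\V_1(x)\}$ is closed by continuity of $\V_1$ and $\V_2$, and $f$-invariant by the $A'$-invariance of both; since $Z\ne\M$ and $f$ is topologically transitive, $U=\M\setminus Z$ is a non-empty open dense $f$-invariant set. On $U$ I would parametrize $\V_2$ by its slope in the second coordinate, writing $\V_2(x)=\R\cdot(u(x),1)^T$ with $u:U\to \R$ H\"older continuous; then the invariance relation $A'(x)\V_2(x)=\V_2(fx)$ becomes $k(x)(u(x)+\a(x),1)^T\in \R\cdot(u(fx),1)^T$, which forces $\a(x)=u(fx)-u(x)$ for every $x\in U$.

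The single subtlety is that $u$ is defined only on $U$ and need not extend to $\M$, since $\V_2$ may agree with the horizontal sub-bundle along certain exceptional orbits. This is precisely the case covered by the stronger form of Theorem \ref{Liv main}: since $\a^+(p,n)=u(p)-u(p)=0$ for every periodic point $p=f^np$ lying in the non-empty open $f$-invariant set $U$, that theorem produces a H\"older function $\va$ on all of $\M$ with $\a=\va\circ f-\va$, giving (ii).
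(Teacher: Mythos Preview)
Your proof is correct and follows essentially the same route as the paper. The only minor difference is in the execution of (i)$\Rightarrow$(ii): the paper observes directly that for a periodic point $p=f^np$ in the open invariant set where the two sub-bundles differ, the matrix $\A'(p,n)=k^\times(p,n)\left[\begin{smallmatrix}1&\a^+(p,n)\\0&1\end{smallmatrix}\right]$ preserves two distinct lines and hence must have $\a^+(p,n)=0$, whereas you obtain the same conclusion by parametrizing $\V_2$ via its slope and telescoping the resulting coboundary equation $\a=u\circ f-u$ on $U$; both then invoke the strong form of Theorem~\ref{Liv main}.
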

 
 \begin{proof}
Suppose that $A'$ preserves two continuous sub-bundles.
The set  $\n \subset \M$ where the sub-bundles 
do not coincide is non-empty, 
open and $f$-invariant. Also, for every periodic point 
$p=f^np\,$ in $\n$, 
$$
  \A' (p,n)= k^\times(p,n) \left[ \begin{array}{cc} 1 & \a^+(p,n) \\
 0 & 1 \end{array} \right] \;\text{ preserves two lines,   and hence }\a^+(p,n)=0.
$$ 
 By Theorem \ref{Liv main}, $\a$ is H\"older cohomologous to 0, i.e.
 $\a(x)=s(fx)-s(x)$ for a H\"older function $s$.
If such a function $s$ exists, then

$
   A'(x)=C(fx)\cdot l(x)\,\Id \cdot C(x)^{-1}, \;
   \text{ where } C(x)= \left[ \begin{array}{cc} 1 & s(x) \\
 0 & 1 \end{array} \right],\;
$
 and (iv) follows.
 \end{proof}

We note that $A$-invariant 
sub-bundles  are either all orientable or all non-orientable.
 Indeed if a sub-bundle $\V$ is orientable, then we can 
 obtain 
 a continuous conjugacy between $A$ and $k(x)\Id$, which implies 
 that all $A$-invariant sub-bundles are orientable.

 This completes the classification 
 of cocycles that have an invariant sub-bundle.
 For future reference in the proof of the Theorem \ref{periodic cont} 
 we make the following remark.
 
 \begin{remark} \label{not sim}
 Let $A$ be a cocycle of type I$\,'$ or II$\,'$  and let $A'$ 
 be its model. Then  there is a periodic point $p=f^np$ 
 such that matrices $\A(p,n)$ and $\A' (p,n)$ are not conjugate. 
\end{remark}

\begin{proof} Let $p=f^np $ be a periodic point in $\M$ for which 
$\tilde f^n q_1 = q_2$, where $q_1$ and $q_2$ are the lifts 
of $p$. Then by \eqref{C pm} we have 
$C(\tilde f^n q_1)=\tilde C(q_2)=- \tilde C(q_1)$ and hence
$$
\begin{aligned}
\A' (p,n)= \tilde  \A' (q_1,n) =  
\tilde C(\tilde f^n q_1)\, \tilde \A (q_1,n) \, \tilde C^{-1}(q_1) =
- \tilde C(q_1)\, \A (p,n) \, \tilde C^{-1}(q_1).
\end{aligned}
$$
If $\A (p,n)$ and $\A' (p,n)$ are conjugate, so are 
$\A (p,n)$ and  $-\A (p,n)$, which is impossible.

Existence of such a point $p$ can be easily obtained.
The two lifts $\tilde f_1$ and $\tilde f_2$ of $f$ to $\tilde \M$
satisfy $\tilde f_1=  i \circ \tilde f_2$, where $i$ is the involution 
of the cover. Moreover, both lifts commute with $i$,
and hence  $\tilde f_1^n=  i^n \circ \tilde f_2^n$.
Hence for a periodic point  of an odd period $n$, 
one of the lifts has the desired property.
In fact, since both lifts have points of odd periods,
such a point $p$ exists for each lift.
 \end{proof}

\vskip.1cm


\noindent {\bf III.} Since  $A$ does not preserve any
sub-bundles,  by Proposition \ref{properties} (iii),
$A$ preserves a H\"older continuous conformal structure 
on $\E$. That is,  for every $x$  in $\M$, there is an inner product 
$\langle \cdot, \cdot \rangle _x$ such that 
$$  
\langle A(x) \mathbf u, A(x) \mathbf v \rangle _{fx} =
 k_x \langle \mathbf u, \mathbf v \rangle _x \quad\text{and}\quad
 \langle \mathbf u, \mathbf v \rangle _x =
 \langle S(x) \mathbf u,\mathbf v \rangle 
 \;\text{ for all } \mathbf u, \mathbf v \in \E_x,
 $$
where $ \langle \cdot , \cdot \rangle $ is the standard inner product and
$S(x)$ is a real symmetric positive definite matrix that depends H\"older 
continuously on $x$. For such $S(x)$ there exists a unique symmetric positive 
definite matrix $C(x)$ satisfying $S(x)=C^2(x)$, which also depends 
H\"older continuously on $x$. Then 
$\langle \mathbf u, \mathbf v \rangle _x= 
\langle S(x) \mathbf u, \mathbf v \rangle = 
\langle C(x) \mathbf u, C(x) \mathbf v \rangle$ and hence
$$
     \langle C(fx) A(x) \mathbf u, C(fx) A(x) \mathbf v \rangle =  
     \langle  A(x) \mathbf u, A(x) \mathbf v \rangle _{fx}
      = k_x \langle \mathbf u, \mathbf v \rangle _x
    =k_x  \langle C(x) \mathbf u, C(x) \mathbf v \rangle .
$$
Denoting $\,\mathbf u'=C(x)\mathbf u\,$ and $\,\mathbf v'=C(x)\mathbf v,\,$ 
we obtain
$$
 \langle C(fx) A(x) C(x)^{-1} \mathbf u', C(fx) A(x) C(x)^{-1}\mathbf v' \rangle  
   =   k_x  \langle \mathbf u', \mathbf v' \rangle
  \;\text{ for all } \mathbf u', \mathbf v' \in \E_x.
 $$
Thus $A'(x)=C(fx) A(x) C(x)^{-1}$ is H\"older continuous 
and conformal. Since $A'(x)$ is orientation-preserving,  
it is a scalar multiple of a rotation, i.e.
$A'(x)=k(x) R(\a(x))$.
Replacing $\a$ by $\a+\pi$  if necessary,
we can assume that $k$ is  positive on $\M$.

It follows from the lemma below that, since $A'$ does not 
preserve any invariant sub-bundles, the function 
$\a(x)\,$(mod $\pi$)  is not cohomologous to 0.
\vskip.1cm

\begin{lemma} \label{trivial rotation}
Let $A'(x)=k(x)\, R(\a(x))$ and $k(x)\ne 0$.
If $A'$ preserves more than one measurable conformal 
structure, then $\a\,$(mod $\pi$) is cohomologous to 0 
in $\R/\pi \Z$ and  $A'$ preserves infinitely many conformal 
structures and infinitely many sub-bundles.

\end{lemma}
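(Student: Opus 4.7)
The plan is to translate preservation of a non-standard conformal structure by $A' = k\,R(\alpha)$ into a cohomological equation for $\alpha$ modulo $\pi$, and then use the resulting measurable coboundary to manufacture one-parameter families of invariant structures and sub-bundles.

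First, I would parametrize conformal structures. Since $A'(x)$ is a nonzero scalar multiple of a rotation, $A'(x)^T A'(x) = k(x)^2\,\Id$, so $A'$ preserves the standard structure $\s_0$ represented by $\Id$. Any other symmetric positive definite $2\times 2$ matrix $S$ of determinant $1$ admits a diagonalization
\[
 S \;=\; R(\theta)\,\mathrm{diag}(r,1/r)\,R(-\theta), \qquad r>1,\ \theta\in\R/\pi\Z,
\]
where $\theta$ records the major-axis line and $r$ the eccentricity. Thus the hypothesized second invariant measurable structure $\s$ is encoded by measurable functions $\theta(x)\in\R/\pi\Z$ and $r(x)>1$.

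Second, I would compute the action of $A'(x)=k(x)R(\alpha(x))$ on $\s(x)$ via the formula from Section \ref{structure}: the scalar $k(x)$ only rescales, and conjugation by $R(\alpha(x))$ rotates the diagonalization of $S(x)$, so the induced action on the parameters is $(\theta,r)\mapsto(\theta+\alpha\bmod\pi,\,r)$. The invariance $A'(x)[\s(x)]=\s(fx)$ thus becomes
\[
  r(fx)=r(x), \qquad \theta(fx)\equiv\theta(x)+\alpha(x)\pmod\pi.
\]
Since $r$ is $f$-invariant and the reference measure is ergodic, $r\equiv r_0$ almost everywhere, and since $\s\ne\s_0$ one must have $r_0>1$; hence $\theta$ is well defined a.e. The second equation is then exactly the statement that $\alpha\bmod\pi$ is a measurable coboundary in $\R/\pi\Z$. (Applying Theorem \ref{Liv meas} to a suitable local real lift, $\theta$ may in fact be taken H\"older continuous, but this is not needed for the conclusion.)

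Third, with such a $\theta$ in hand, the additional invariant objects are immediate. For every pair $(c,\rho)\in\R/\pi\Z\times(1,\infty)$, the conformal structure with parameters $(\theta(x)+c,\,\rho)$ is $A'$-invariant by the same computation, and different pairs give different structures; for every $c\in\R/\pi\Z$, the measurable line field $\V_c(x)$ in direction $\theta(x)+c$ is $A'$-invariant, since rotation by $\alpha(x)$ carries the line of direction $\theta(x)+c$ to that of direction $\theta(fx)+c$. Varying $c$ thus yields infinitely many distinct invariant sub-bundles. I expect the main obstacle to be the clean bookkeeping in the parametrization step: one must check that the formula for $A[\s]$ from Section \ref{structure}, applied to $A=kR(\alpha)$, acts on the $(\theta,r)$-coordinates precisely by $(\theta+\alpha\bmod\pi,\,r)$. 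This is a short but essential $2\times 2$ computation; once it is in place, invoking ergodicity to collapse $r$ and reading off the coboundary equation are mechanical.
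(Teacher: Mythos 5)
Your proof is correct, but it takes a genuinely different route from the paper's. The paper first upgrades the second measurable invariant conformal structure to a H\"older continuous one (via Proposition \ref{properties}(ii), which applies because a conformal cocycle automatically satisfies Condition \ref{assumptions}), then looks at the open invariant set where $\s\ne\s_0$, observes that at each periodic point there the matrix $\A'(p,n)$ must preserve a non-circular ellipse and hence be $\pm k\,\Id$, so $\a^+(p,n)\equiv 0\pmod\pi$, and finally invokes the periodic-data Liv\v{s}ic theorem (Theorem \ref{Liv main}, in its open-invariant-set form) to produce a H\"older coboundary $\bar s$; the infinitely many invariant structures and line fields then come from conjugating $k(x)\Id$ by $R(\bar s(x))$ in $GL(2,\R)/\{\pm\Id\}$, essentially as in your last step. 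You instead work directly with the measurable structure: parametrizing it by $(\theta,r)$ and checking that $kR(\a)$ acts by $(\theta,r)\mapsto(\theta+\a\bmod\pi,\,r)$, you read off the coboundary equation $\theta(fx)=\theta(x)+\a(x)\pmod\pi$ after using ergodicity to make $r$ constant. This is cleaner in that it bypasses both the continuity upgrade and the periodic-point argument. The one place where you are terser than you should be is the regularity of the coboundary: the lemma is used later (in the classification into type III and in Proposition \ref{conformal}) in a form that requires the transfer function to be H\"older, and your $\theta$ is a priori only measurable. Your parenthetical appeal to Theorem \ref{Liv meas} ``via a local real lift'' is the right idea, but that theorem is stated for real-valued transfer functions, so you are implicitly using its $\R/\pi\Z$-valued analogue; this is true and standard (and the paper itself leans on the circle-valued version of Theorem \ref{Liv main}), but it deserves a sentence rather than a parenthesis. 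Alternatively, you could note that your measurable invariant line fields are automatically H\"older by Proposition \ref{properties}(i), which recovers the needed regularity without any Liv\v{s}ic upgrade of $\theta$ itself.
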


\begin{proof}
 Suppose that  $A'$ preserves a
 measurable, and hence continuous, conformal structure $\s$
 different from the standard one $\s_0$. 
 The set $\n$ where $\s\ne\s_0$
is non-empty, open and invariant. At every periodic 
point $p=f^n p$ in $\n$, the matrix $\A'(p,n)=k^\times(p,n)\, R(\a^+ (p,n))$
preserves a non-circular ellipse up to scaling, and hence
$   \A'(p,n)  =  \pm k\, \Id .$
It follows that $\a^+(p,n)=0\,$(mod $\pi$) for any periodic 
point in $\n$, and  by Theorem~\ref{Liv main},
$\a\,$(mod $\pi$) is cohomologous to 0 in $\R/\pi \Z$.

Let  $\bar \a(x)= \a(x)$ (mod $\pi$) and let 
$\bar A' =k(x)\, R(\bar \a(x))$ be the projection of $A'$ to
 $GL(2,\R) / \{\pm \Id \}$. If $\bar \a$ is cohomologous to 0, i.e. 
$\bar\a(x)=\bar s(fx)-\bar s(x)$ for a  H\"older continuous 
function $\bar s:\M\to\R/\pi\Z$, 
then in $GL(2,\R) /  \{\pm \Id \}$ we have
$$
   \bar A'(x)=\bar C(fx)\cdot k(x)\,\Id \cdot \bar C(x)^{-1}, 
   \quad \text{where }
    \bar C(x) = R(\bar s(x)).
$$
Hence $\bar A'$ 
and $A'$ preserve infinitely many conformal 
structures and  sub-bundles.
\end{proof}


\section{Cohomology of the model cocycles}
\label{sec model}


First we consider cohomology of non-trivial upper triangular 
cocycles. 

\begin{proposition} \label{triangular}
 Let $\,A(x)= k(x)\left[ \begin{array}{cc}1 & \a(x) \\  
 0 & 1 \end{array} \right]$ and 
 $\;B(x)= l(x)\left[ \begin{array}{cc}1 & \b(x) \\  
 0 & 1 \end{array} \right]$,\\
where  $k(x),\,l(x)\ne 0$ for all $x$, and $\a$, $\b$
are not cohomologous to 0.  Then
\begin{itemize}
\item[(i)]   Any measurable conjugacy between $A$ and $B$
    is  H\"older  and upper triangular.
\vskip.1cm

\item[(ii)]   $A$ and $B$ are (measurably or H\"older)
cohomologous if and only if there exist H\"older functions
 $\va(x)$ and $s(x)$ and a constant  $c\ne 0$  
 such that \\
$\, k(x)/l(x)=\va(fx)/\va(x)\,$ and $\,\a(x)-c\b(x)=s(fx)-s(x)$.
\vskip.1cm

\item[(iii)] A measurable function 
$D(x)$ satisfies  $A(x)=D(fx) A(x)D(x)^{-1}$
if and only if  $\,D$ is a constant upper triangular matrix 
with equal diagonal entries.

\end{itemize} 
\end{proposition}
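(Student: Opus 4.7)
The plan is to exploit that $A$ and $B$ each preserve the horizontal sub-bundle $\V_0(x)=\R e_1$, and --- since by hypothesis $\a$ and $\b$ are not cohomologous to $0$ --- this is the \emph{only} H\"older continuous invariant sub-bundle for either cocycle, by Lemma \ref{number of sub-bundles}. This forces any conjugacy to be upper triangular, after which the two diagonal entries are governed by a multiplicative Liv\v{s}ic equation and the upper-right entry by an additive one, so Theorem \ref{Liv meas} applies componentwise.

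For (i), let $C$ be a measurable conjugacy between $A$ and $B$. Both cocycles satisfy Condition \ref{assumptions}, because $\A(p,n)$ and $\B(p,n)$ are upper triangular with repeated diagonal entries $k^\times(p,n)$ and $l^\times(p,n)$ respectively. Hence Proposition \ref{properties}(i) applies, and the argument of Lemma \ref{invariant} shows that $C\V_0$ is a measurable $A$-invariant sub-bundle, hence H\"older continuous. Lemma \ref{number of sub-bundles} then forces $C\V_0=\V_0$, so $C$ is upper triangular. Writing $C(x)=\left[\begin{array}{cc} a(x) & b(x) \\ 0 & d(x) \end{array}\right]$ and expanding $A(x)=C(fx)B(x)C(x)^{-1}$, the diagonal entries give
\[
\frac{a(fx)}{a(x)} \;=\; \frac{k(x)}{l(x)} \;=\; \frac{d(fx)}{d(x)},
\]
and after routine algebra the $(1,2)$ entry reduces to
\[
\a(x) - \frac{a(x)}{d(x)}\,\b(x) \;=\; \frac{b(fx)}{d(fx)} - \frac{b(x)}{d(x)}.
\]
The first display shows $a/d$ is $f$-invariant, hence constant $c\ne 0$ by ergodicity of $\mu$. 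The multiplicative form of Theorem \ref{Liv meas} then upgrades $a$ (and $d=a/c$) to H\"older; signs cause no trouble because on the connected manifold $\M$ the ratio $k/l$ has constant sign, and the existence of a measurable solution $a$ forces that sign to be positive. With $d$ H\"older, the second display is an additive Liv\v{s}ic equation with H\"older right-hand side $\a-c\b$, so $s:=b/d$ and thus $b=sd$ are H\"older.

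Parts (ii) and (iii) follow by book-keeping. Necessity in (ii) is exactly the computation above with $\va=a$ and $s=b/d$. For sufficiency, given $c,\va,s$ as in the statement, set $a=\va$, $d=\va/c$, $b=sd$; direct verification shows that $C=\left[\begin{array}{cc} a & b \\ 0 & d \end{array}\right]$ conjugates $B$ to $A$ and is H\"older. For (iii), apply (i)--(ii) with $B=A$, so $k=l$ and $\a=\b$. The diagonal equation becomes $a(fx)=a(x)$ (and similarly for $d$), forcing $a,d$ to be constant by ergodicity. The off-diagonal relation becomes $(1-c)\,\a(x)=s(fx)-s(x)$, and since $\a$ is not cohomologous to $0$ we must have $c=1$, i.e.\ $a=d$. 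Then $s(fx)=s(x)$, so $s$ is constant and hence so is $b$. Thus $D$ is a constant upper triangular matrix with equal diagonal entries, and the converse is trivial since any such matrix commutes with $A(x)$.

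The main obstacle is conceptual rather than computational: one must invoke Proposition \ref{properties}(i) together with Lemma \ref{number of sub-bundles} at the very first step, so as to force any measurable conjugacy to preserve $\V_0$, thereby reducing the non-abelian cohomology problem to two scalar applications of Theorem \ref{Liv meas}. The only genuine technical care beyond that lies in the sign bookkeeping when passing from the measurable multiplicative Liv\v{s}ic equation $a(fx)/a(x)=k(x)/l(x)$ to a positive-valued one to which the theorem directly applies.
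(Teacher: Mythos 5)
Your proposal is correct and follows essentially the same route as the paper: use the uniqueness of the H\"older (hence measurable, via Proposition \ref{properties}(i)) invariant sub-bundle to force the conjugacy to be upper triangular, then reduce to scalar multiplicative and additive Liv\v{s}ic equations and invoke Theorem \ref{Liv meas}. The one step you only gesture at --- why a measurable solution of $a(fx)/a(x)=k(x)/l(x)$ rules out $k/l<0$ --- is handled in the paper by noting that otherwise $f$ would interchange the positive and negative sets of $a$, contradicting mixing; with that sentence added your argument matches the paper's.
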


The last part of the proposition describes the {\it centralizer},
or the set of self-conjugacies of $A$. We discuss this set and
its connections to conjugacies in Section \ref{centralizers}.

\begin{proof} {\bf (i,$\,$ii)}
Let $C$ be a measurable 
function such that $A(x)=C(fx)B(x)C(x)^{-1}$.
We can assume that the set  where $C$ is defined is $f$-invariant. 
Clearly, $A$ preserves the sub-bundle $\E_1$ spanned by the 
first coordinate vector, and by
Lemma \ref{number of sub-bundles},
it is the only measurable invariant sub-bundle for $A$. 
Since $\E_1$ is $B$-invariant, $C(x)\E_1$ is a measurable 
$A$-invariant sub-bundle. If follows that $C(x)\E_1=\E_1$ and 
hence  the matrix $C(x)$ is upper triangular a.e.
Thus for all $x$ in an invariant set 
$X$ of full measure,
$C(x)=\va(x)${\small $\left[ \begin{array}{cc}r(x) & s(x) \\  
 0 & 1 \end{array} \right]$}.
  Then $A(x)=C(fx)B(x)C(x)^{-1}$ yields
 $$
k(x)\left[ \begin{array}{cc}1 & \a(x) \\  
    0 & 1 \end{array} \right] 
   = \frac{l(x)\va(fx)}{\va(x) }
    \left[ \begin{array}{cc} \frac{r(fx)}{r(x)} &
     -\frac{r(fx)}{r(x)}s(x)+ r(fx)\b(x)+s(fx)
     \\  0 & 1 \end{array} \right].
 $$
It follows that $k(x)/l(x)=\va(fx)/\va(x)$ on $X$. 
The functions $k$ and $l$ have constant sign
on $\M$, moreover they are of the same sign. Otherwise,
$\text{sign}\,\va(fx)=-\text{sign}\,\va(x)$ and hence for 
the sets $X_\pm=\{ x \in X : \text{sign} \, \va(x) =\pm 1\}$
we have $f(X_+)= X_-$, which contradicts mixing.
It follows from 
Theorem \ref{Liv meas} that the measurable function $\va$ 
is H\"older  and we have 
$k(x)/l(x)=\va(fx)/\va(x)$ for all $x$ in $\M$.

Since   $r(fx)/r(x)=1$,  the function $r$ is invariant, 
and hence constant a.e. 
Then
$$
 -s(x)+ c\b(x)+s(fx)= 
  \a(x), \;\text{ equivalently }\; \a(x)-c\b(x)=s(fx)-s(x) \text{ a.e.,}
$$ 
and hence by Theorem \ref{Liv meas} the measurable 
function $s(x)$ is H\"older. 

Thus, if there is a measurable conjugacy 
between $A$ and $B$ then it is of the form 
\begin{equation} \label{C triang}
        C(x)=\va(x) \left[ \begin{array}{cc} c & s(x) \\ 
        0 & 1 \end{array} \right],
 \end{equation} 
where $\va(x)$ and $s(x)$ are H\"older continuous 
functions such that  
$$ 
k(x)/l(x)=\va(fx)/\va(x) \quad \text{and}\quad \a(x)-c\b(x)= s(fx)-s(x)
\quad\text{for all }x\text{ in }\M. 
$$
Conversely, if such $c$, $\va$, and $s$ exist, then 
$C$ is a H\"older  conjugacy between 
$A$ and $B$.

\vskip.2cm

{\bf (iii)} If a measurable function $D(x)$ satisfies
$\,A(x)=D(fx) A(x)D(x)^{-1}$,
then it is of the form 
\eqref{C triang}, where 
$\,\va(fx)/\va(x)=1\,$ and  $\,(1-c)\a(x)=s(fx)-s(x).\;$
This implies that $\va$ is constant and, since $\a$ 
is not cohomologous to 0, $\,c=1$ and hence 
$s$ is constant.
Thus $D(x)=D=d${\small $\left[ \begin{array}{cc} 1 & s \\  
 0 & 1 \end{array} \right]$},
Conversely, any such matrix 
$D$ satisfies the equation.
\end{proof}


The case of scalar cocycles is simple.

\begin{proposition} \label{scalar}
Let $A(x)=k(x)\Id$ and $B(x)=l(x)\Id$, where $k(x), l(x)\ne 0$. Then
\begin{itemize}
\item[(i)] Any measurable conjugacy between $A$ and $B$ 
is of the form $\va(x)C_0$, where 
$\va(x)$ is a H\"older continuous function
such that $\va(fx)/\va(x)=k(x)/l(x)$.

\item[(ii)] A measurable function $D(x)$ satisfies  
$A(x)=D(fx) A(x)D(x)^{-1}$ if and only if  $\,D$ is
 constant.
\end{itemize} 
\end{proposition}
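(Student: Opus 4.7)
The plan is to mimic the argument in the proof of Proposition~\ref{triangular}, but now the analysis is considerably simpler because every matrix entry of the conjugacy satisfies the \emph{same} multiplicative scalar cocycle equation.

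For part (i), if $C:\M\to GL(2,\R)$ is a measurable conjugacy between $A$ and $B$, then on an $f$-invariant set of full measure
\[
k(x)\,\Id \;=\; C(fx)\,l(x)\,\Id\,C(x)^{-1}, \quad\text{i.e.,}\quad C(fx)\;=\;\frac{k(x)}{l(x)}\,C(x).
\]
Writing $C(x)=(c_{ij}(x))$, each entry satisfies the same scalar equation $c_{ij}(fx)=(k(x)/l(x))\,c_{ij}(x)$. Since $k/l$ is nowhere zero, the set $Z_{ij}=\{x:c_{ij}(x)=0\}$ is $f$-invariant and hence has measure $0$ or $1$ by ergodicity of the reference measure. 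Not all $Z_{ij}$ can have full measure, or else $C$ would vanish a.e., contradicting invertibility. Pick $(i_0,j_0)$ with $c_{i_0j_0}$ nonzero a.e.\ and set $\va(x)=c_{i_0j_0}(x)$. For any other entry, the ratio $c_{ij}(x)/\va(x)$ is $f$-invariant a.e., hence constant by ergodicity. This gives $C(x)=\va(x)\,C_0$ a.e., where $C_0$ is a constant invertible matrix with a $1$ in position $(i_0,j_0)$.

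It remains to show $\va$ is (a.e.\ equal to) a H\"older function. The relation $\va(fx)/\va(x)=k(x)/l(x)$ holds a.e., and the multiplicative form of Theorem~\ref{Liv meas} applies once we know that $\va$ has constant sign; this is verified exactly as in the proof of Proposition~\ref{triangular}(i,ii): on any $f$-invariant set of full measure where $\va$ is defined, the sets $X_\pm=\{\va>0\}$ and $\{\va<0\}$ are mapped into each other by $f$ if the signs of $k$ and $l$ differ, which contradicts mixing; so $k$ and $l$ have the same sign, $\va$ has constant sign, and Theorem~\ref{Liv meas} (multiplicative version) upgrades $\va$ to H\"older continuous with $\va(fx)/\va(x)=k(x)/l(x)$ holding everywhere.

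For part (ii), the equation $A(x)=D(fx)A(x)D(x)^{-1}$ reduces to $D(fx)=D(x)$ entrywise, so each entry of $D$ is $f$-invariant and hence a.e.\ constant by ergodicity. Thus $D$ is a constant matrix a.e.; conversely, any constant $D$ trivially commutes with the scalar $k(x)\Id$ and satisfies the equation. The main (and only mild) obstacle is the sign handling needed to invoke the measurable Liv\v{s}ic theorem for $\va$, but mixing of the reference measure resolves it just as in Proposition~\ref{triangular}.
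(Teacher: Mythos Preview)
Your proof is correct. The paper does not actually supply a proof of this proposition; it simply remarks that ``the case of scalar cocycles is simple'' and moves on, so there is no detailed argument to compare against. Your approach---reducing the matrix conjugacy equation to a single scalar multiplicative equation satisfied by every entry of $C$, using ergodicity to show the ratios $c_{ij}/\va$ are constant, and then invoking the sign/mixing argument from Proposition~\ref{triangular} together with Theorem~\ref{Liv meas}---is exactly the natural way to fill in the omitted details and is fully in the spirit of the surrounding proofs.
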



Now we consider non-trivial conformal cocycles.

\begin{proposition} \label{conformal}
Let  $\,A(x)=k(x) \, R(\a(x))$ and $\,B(x)=l(x) \, R(\b(x))$,\,
       where \\$k(x),\,l(x) > 0$ for all $x$, and
        $\a, \b:\M\to \R/2\pi\Z$ are such that $\a$ and $\b$ (mod $\pi$) \\
        are not cohomologous to 0 in  $\R/\pi\Z$. Then
\begin{itemize}
\item[(i)]    Any measurable conjugacy between $A$ and $B$
is H\"older  and conformal.
\vskip.05cm
\item[(ii)]   $A$ and $B$ are (measurably or H\"older)
cohomologous if and only if there exist H\"older continuous functions
 $\va:\M\to \R$ and $s:\M\to \R/2\pi\Z$ and a constant $c=\pm1$
 such that 
$\, k(x)/l(x)=\va(fx)/\va(x)\,$ and $\,\a(x)-c\b(x)=s(fx)-s(x)$.
\vskip.1cm

\item[(iii)] A measurable function $D(x)$ satisfies  
$A(x)=D(fx) A(x)D(x)^{-1}$ if and only if  $\,D(x)=D$ is a
 constant  scalar multiple of a rotation.
\end{itemize} 
\end{proposition}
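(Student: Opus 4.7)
The plan is to mirror the proof of Proposition \ref{triangular}, replacing the role of the unique invariant sub-bundle by the unique invariant conformal structure. First I would note that $A = k\, R(\a)$ preserves the standard conformal structure $\s_0$ (since $R(\a)$ is orthogonal), and by Lemma \ref{trivial rotation} the hypothesis that $\a\,$(mod $\pi$) is not cohomologous to $0$ guarantees $\s_0$ is the \emph{only} measurable $A$-invariant conformal structure. If $C$ is a measurable conjugacy, then $C[\s_0]$ is a measurable $A$-invariant conformal structure, hence $C[\s_0] = \s_0$, which forces $C(x)$ to be conformal a.e. Since $\det A(x)/\det B(x) = k(x)^2/l(x)^2 > 0$, the sign of $\det C$ is $f$-invariant and hence constant by mixing, so after fixing a reflection $J$ one may write globally either $C(x) = \va(x)\, R(s(x))$ or $C(x) = \va(x)\, R(s(x))\, J$, with $\va > 0$ (the sign convention is justified by another mixing argument as in Proposition \ref{triangular}).

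Next I would substitute this expression into $A(x) = C(fx)\, B(x)\, C(x)^{-1}$. Using the identity $J R(\b) J = R(-\b)$, a direct computation shows in both cases
\[
   k(x)/l(x) \,=\, \va(fx)/\va(x) \quad\text{and}\quad \a(x) - c\,\b(x) \,=\, s(fx) - s(x) \;\,\text{in } \R/2\pi\Z,
\]
with $c=1$ in the rotation case and $c=-1$ in the reflection case. Applying the measurable Liv\v{s}ic theorem (Theorem \ref{Liv meas}) in its multiplicative form to $\va$ and in its $\R/2\pi\Z$-valued additive form to $s$ yields that $\va$ and $s$ are H\"older continuous and that the coboundary identities hold pointwise. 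This proves (i) and the forward direction of (ii); the converse is a direct verification that $C(x) = \va(x)\, R(s(x))$ (possibly times $J$) is a H\"older conjugacy.

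For (iii), any measurable self-conjugacy $D$ of $A$ is H\"older and conformal by (i), so takes one of the above two forms. Substituting into $A(x) = D(fx)\, A(x)\, D(x)^{-1}$ yields $\va(fx)/\va(x) = 1$ and $(1-c)\,\a(x) = s(fx) - s(x)$ in $\R/2\pi\Z$. Hence $\va$ is constant; for $c = 1$, $s$ is also constant, so $D$ is a constant scalar multiple of a rotation. For $c = -1$ the equation says $2\a$ is cohomologous to $0$ in $\R/2\pi\Z$, equivalently $\a\,$(mod $\pi$) is cohomologous to $0$ in $\R/\pi\Z$, contradicting the hypothesis; so this case does not occur.

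The main technical obstacle I expect is handling the $c = \pm 1$ dichotomy uniformly and verifying that the sign of $\det C$ is genuinely constant, which requires a mixing argument analogous to the one in Proposition \ref{triangular}. A minor but necessary ingredient is the circle-valued version of the measurable Liv\v{s}ic theorem, which follows from the same Specification-based proof as the real-valued Theorem \ref{Liv meas}.
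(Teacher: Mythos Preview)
Your proposal is correct and follows essentially the same route as the paper: both use Lemma \ref{trivial rotation} to force $C[\s_0]=\s_0$, split into orientation-preserving and orientation-reversing cases by ergodicity, reduce to scalar coboundary equations, and apply the measurable Liv\v{s}ic theorem (including its $\R/2\pi\Z$-valued form). The only cosmetic differences are that the paper invokes the $B$-invariance of $\s_0$ (rather than $A$-invariance) to conclude $C[\s_0]$ is $A$-invariant via Lemma \ref{invariant}, and parametrizes the orientation-reversing case by the reflection $Q(s(x))$ rather than $R(s(x))J$; neither affects the argument.
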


It is clear from the proof that  $c=1$ and $c=-1$ in (ii) 
correspond to the conjugacy being orientation-preserving and 
orientation-reversing respectively.

\begin{proof}
{\bf (i,$\,$ii)} H\"older continuity of a measurable conjugacy can
 be obtained as a corollary 
of  the result by K. Schmidt \cite{Sch} on cocycles 
of bounded distortion. However, we will obtain 
it independently as a part of our proof.

Let $C$ be a measurable conjugacy between $A$ and $B$.
The cocycle $B$ preserves the standard 
conformal structure $\s_0$, and hence  $C[\s_0]$
is a measurable invariant conformal 
structure for $A$. By Lemma \ref{trivial rotation}, $\s_0$
is the only such conformal structure.
It follows that  $\,C[\s_0]=\s_0\,$ and hence $C$ is conformal a.e.
Since the set where $C$ is orientation-preserving
is $f$-invariant, by ergodicity, $C$ is either orientation-preserving 
a.e. or orientation-reversing a.e.
If $C$ is orientation-preserving, 
we can write  $C(x)=\va(x) R(s(x))$, 
and the equation $A(x)=C(fx)B(x)C^{-1}(x)$ yields
$$
   k(x) \,R(\a(x))  
  =(l(x) \,\va(fx)/\va(x)) \cdot R (\b(x)+s(fx)-s(x)).
$$
It follows that for almost every $x$
\begin{equation} \label{+}
    k(x)/l(x)=\va(fx)/\va(x) \quad\text{and}\quad 
    \a(x)-\b(x)= s(fx)-s(x).
\end{equation}     
By Theorem \ref{Liv meas}, the measurable functions 
 $\va$ and $s$ are H\"older continuous. 
 Conversely, if  such functions $\va$ and $s$ exist, then
 $C(x)=\va(x)R(s(x))$ is a H\"older 
 conjugacy between $A$ and $B$.
 If $C(x)$ is orientation-reversing, it is a scalar multiple 
of a reflection, 
$$
  C(x)=\va(x) \left[\begin{array}{cc}
  \cos s(x) & \;\;\; \sin s(x) \\
  \sin s(x) &  -\cos s(x) \\ \end{array}\right] 
   \overset{\text{def}}{=} \,\va(x) \, Q(s(x)).
$$
It follows that  for almost every $x$
\begin{equation}\label{-}
    k(x)/l(x)=\va(fx)/\va(x) \quad\text{and}\quad 
    \a(x)+\b(x)= s(fx)-s(x),
\end{equation}     
and hence $\va$ and $s$ are H\"older continuous.
\vskip.2cm

{\bf (iii)} 
Let $D(x)$ be a measurable function satisfying 
$A(x)=D(fx)A(x)D(x)^{-1}$. 
If $D(x)$ is orientation-preserving, then $D(x)=\va(x) R(s(x))$,
and by \eqref{+} we have $\va(fx)/\va(x)=1$ and $s(fx)-s(x)=0$.
Hence $\va(x)=d$ and $s(x)=s$ are constant, and 
$D(x)=D=d\, R(s)$. Conversely,  any such matrix 
 $D$ satisfies the equation.

If $D(x)$ is orientation-reversing,  we obtain \eqref{-} with $k=l$ and 
$\a=\b$. The latter implies that  $\a$ is cohomologous to 0, 
which contradicts the 
assumption.
\end{proof}
\vskip.2cm


\subsection
{Centralizers of cocycles and connection to conjugacies.} 
 \label{centralizers} $\;$
 
\noindent Let $A,B:\M\to G\,$ be two cocycles. 
The centralizer of $A$ is the set
 $$
    Z(A)=\{ D:\M\to G\; |\;\; A(x)=D(fx)A(x)D(x)^{-1}  \}.
 $$
 It is easy to see that 
$Z(A)$ is a group with respect to pointwise multiplication.\\
We denote by $\text{Conj}(A,B)$ the set of conjugacies 
between  $A$ and $B$, i.e.
$$
\text{Conj}\,(A,B)=
\{C\,|\;A(x)=C(fx)B(x)C^{-1}(x)\}.
$$
Both sets can be considered in any regularity.
The following properties can be verified by a direct computation.

\begin{itemize}
\item[(i)] $\text{Conj}\,(A,B)=Z(A)\cdot C,
\;\text{ where }\;C\in\text{Conj}\,(A,B).$
\item[(ii)] $Z(A)=C\cdot Z(B)\cdot C^{-1},\,$ 
 where $C\in \text{Conj}\,(A,B)$.
\end{itemize}

In  Propositions \ref{triangular},
 \ref{scalar}, and \ref{conformal} we described the centralizers
 of model upper triangular, scalar, and conformal cocycles
 respectively. In each case, the centralizer is a subgroup
  of the constant matrix functions. It follows  that a
 conjugacy between a model cocycle $A$ and a
measurable cocycle $B$ is unique up to left multiplication 
 by a constant matrix of the corresponding type. 
Property  (ii)   gives,  in particular, 
a description of the centralizer 
 of a cocycle that is cohomologous to a model one.


\section{Proofs of Theorem \ref{measurable implies Holder},
Proposition \ref{conf conj} and Theorem \ref{periodic cont}}
\label{sec same modulus}

\subsection{Proof of Theorem \ref{measurable implies Holder}} $\;$

{\bf (i)}  Lemma \ref{invariant} shows that  the number of measurable  
invariant sub-bundles is an invariant of measurable cohomology.
By Proposition \ref{properties}, measurable sub-bundles are
H\"older continuous, and it follows that cocycles with different number 
of H\"older continuous invariant sub-bundles cannot be 
measurably cohomologous. In the previous section we  
established H\"older continuity  of a measurable conjugacy for 
the three  types of model cocycles, and 
it remains to reduce the general case to the model one.

 Let $C$ be a measurable conjugacy between $A$ and $B$. 
Suppose that each cocycle  preserves exactly one 
sub-bundle, which is orientable. Then  by Theorem 
\ref{reduction}, $A$ and $B$ are H\"older 
cohomologous to model triangular cocycles 
$A'$ and $B'$. Thus we have
$$
  A'  \;\overset{C_A}{\sim}\; A \;\overset{C}{\sim} \;B
   \;\overset{C_B}{\sim}\; B'.
$$
By Proposition \ref{triangular}\,(i), the measurable conjugacy 
$C_ACC_B$ between $A'$ and $B'$ is H\"older,
 and hence so is $C$.

Suppose that $A$ and $B$ preserve unique sub-bundles, 
$\V_A$ and $\V_B$ respectively,
and at least one of the  sub-bundles is not orientable.
We pass to a double cover to make $\V_A$ orientable
and then, if necessary, we pass to a double cover 
again to make the lift of $\V_B$ orientable.
Thus we obtain lifts $\tilde A$  of $A$ and $\tilde B$ of $B$
 that are measurably conjugate via a lift $\tilde C$
 of $C$ and preserve unique sub-bundles 
 that are orientable. By the argument above,
 $\tilde C$ is H\"older continuous,
 and hence so is $C$.

The result for 
cocycles with at least two invariant sub-bundles,
and  with no invariant sub-bundles,
is obtained similarly.
\vskip.2cm

{\bf (ii)} This follows from a result by V. Ni\c{t}ic\u{a} and A. T\"or\"ok
\cite[Theorem 2.4]{NT98}. Indeed, it is easy to see that for any 
model cocycle $A$ we have
$$
   \lim_{n\to \infty} \sup_{x\in \M} \| \text{Ad}_{\A(x,n)} \|^{1/n} = 1
   \quad \text{and} \quad
   \lim_{n\to \infty} \inf_{x\in \M} \| \text{Ad}_{\A(x,n)^{-1}} \|^{-1/n} = 1,
$$
where Ad is the adjoint. It follows that the same holds for 
any cocycle  satisfying Condition \ref{assumptions}.
Hence the theorem can 
be applied with $G=GL(2,\R)$ and $\alpha_0 =0$.
$\QED$


\subsection{Proof of Proposition \ref{conf conj} } $\,$

First we consider  two model conformal cocycles $A$ and $B$ 
as in Proposition \ref{conformal} satisfying Condition \ref{nec 1}.
Since $A(x)=k(x)\, R(\a(x))$ and 
$B(x)=l(x)\, R(\b(x))$, 
$$
  \A(p,n)=k^\times(p,n)\cdot R(\a^+(p,n))
  \quad\text{and}\quad
  \B(p,n)=l^\times(p,n)\cdot R(\b^+(p,n)),
 $$ 
 which implies that $k^\times(p,n)=l^\times(p,n)$.

Suppose that  $\a^+(p,n)\ne 0\,$(mod $\pi$). Then 
$\b^+(p,n)\ne 0\,$(mod $\pi$), and both $\A(p,n)$ and
$\B(p,n)$ preserve only the standard conformal structure. 
Hence, depending on the sign of the determinant,
$C(p)$ is either a rotation or a reflection. 
In Lemma \ref{same sign} below we show that 
$\det C(p)$ has the same 
sign for all such $p$.  In the case of a rotation, 
$$
  \A(p,n)= R(s)\cdot \B(p,n)\cdot R(-s) = \B(p,n)
  \quad\text{and hence }\a^+(p,n)=\b^+(p,n).
$$
In the case of a reflection, 
$
  \A(p,n)=  Q(s)\cdot \B(p,n)\cdot Q(s) =  
   l^\times(p,n)\cdot R(-\b^+(p,n)), 
$
which implies that  $\a^+(p,n)=-\b^+(p,n)$.

 If $\a^+(p,n)=0\,$(mod $\pi$), then  $\B(p,n)=\A(p,n)$, 
 and hence $\a^+(p,n)=\b^+(p,n)=0$ or $\pi$. This implies that 
$\a^+(p,n)- \b^+(p,n)=0$ and $\a^+(p,n)+ \b^+(p,n)=0$
in $\R/2\pi\Z$.

Thus there exists a constant $c=\pm 1$ such that 
$\a^+(p,n)-c\b(p,n)=0$ whenever $f^np=p$.
Hence $k(x)/l(x)=\va(fx)/\va(x)$ and $\a(x)-c \b(x)=s(fx)-s(x)$
for H\"older  functions, and $A$ and $B$ are H\"older cohomologous.

\begin{lemma} \label{same sign}
If $C(p)$ satisfies Condition \ref{nec 1} for $A$ and $B$ 
as Proposition \ref{conformal}, then 
$\det C(p)$ has the same sign for all $p$ where 
$\a^+(p,n)\ne 0$ and $\b^+(p,n)\ne 0\,$(mod $\pi$).
\end{lemma}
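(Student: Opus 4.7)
I would argue by contradiction using the Specification Property. Suppose there exist periodic points $p_1 = f^{n_1} p_1$ and $p_2 = f^{n_2} p_2$ with $\theta_i := \a^+(p_i, n_i) \not\equiv 0 \pmod{\pi}$, $\det C(p_1) > 0$, and $\det C(p_2) < 0$. The rotation/reflection analysis immediately preceding the statement of the lemma then yields $\b^+(p_1, n_1) \equiv \theta_1$ and $\b^+(p_2, n_2) \equiv -\theta_2$ in $\R/2\pi\Z$.

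For each pair $(k, m) \in \N^2$ I would apply Theorem \ref{specification} to construct a periodic point $q = q_{k,m}$ of period $N = kn_1 + mn_2 + 2M_\e$ that $\e$-shadows $k$ iterates of the $p_1$-orbit followed by $m$ iterates of the $p_2$-orbit, with gap segments of length $M_\e$ between them and at the wrap-around. Using Lemma \ref{close orbits sum} applied to local lifts of $\a$ and $\b$ along these orbits,
\[
   \a^+(q, N) \equiv k\theta_1 + m\theta_2 + E_\a \pmod{2\pi},
   \qquad
   \b^+(q, N) \equiv k\theta_1 - m\theta_2 + E_\b \pmod{2\pi},
\]
where $E_\a, E_\b \in \R/2\pi\Z$ decompose as gap contributions (depending on the specific shadowing orbit) plus shadowing errors of size $O(\e^\sigma)$. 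Provided $\a^+(q, N) \not\equiv 0 \pmod{\pi}$, Condition \ref{nec 1} at $q$ forces $C(q)$ to be conformal, yielding $\a^+(q, N) \equiv \pm \b^+(q, N) \pmod{2\pi}$, which rearranges to
\[
    2m\theta_2 \equiv E_\b - E_\a \pmod{2\pi}
    \quad \text{or} \quad
    2k\theta_1 \equiv -(E_\a + E_\b) \pmod{2\pi}.
\]

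The conclusion would follow by exhibiting $(k, m)$ so that both of these relations fail (while $\a^+(q, N) \not\equiv 0 \pmod{\pi}$ also holds). Since $\theta_i \not\equiv 0 \pmod{\pi}$, the iterated sequences $\{k \theta_i \bmod 2\pi\}_{k \ge 1}$ are either equidistributed (irrational case) or form nontrivial finite cyclic subgroups of $\R/2\pi\Z$ (rational case), which provides the Diophantine flexibility needed to place $2k\theta_1$ and $2m\theta_2$ at prescribed positions away from $0$. The main obstacle is controlling the gap contributions to $E_\a, E_\b$: these can \emph{a priori} range widely in $\R/2\pi\Z$ and are not tamed by simply letting $\e \to 0$, since $M_\e \to \infty$. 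Overcoming this --- for instance by exploiting the H\"older regularity of $\a, \b$ together with a careful selection of shadowing orbits, or by enriching the two-parameter construction so that the gap errors are effectively controlled modulo $2\pi$ --- is the delicate point of the proof.
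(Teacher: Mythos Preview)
Your overall strategy is the same as the paper's: assume periodic points $p_1,p_2$ with $\det C(p_1)>0$, $\det C(p_2)<0$, use the rotation/reflection analysis to get $\a^+(p_1,n_1)=\b^+(p_1,n_1)$ and $\a^+(p_2,n_2)=-\b^+(p_2,n_2)$, then apply Specification to build a periodic $q$ shadowing repeats of both orbits and compare Birkhoff sums via Lemma~\ref{close orbits sum}. The contradiction is obtained by exhibiting a periodic $q$ at which $\A(q,n)$ and $\B(q,n)$ are not conjugate, i.e.\ $\a^+(q,n)\ne\pm\b^+(q,n)$.

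The differences are that you introduce an unnecessary second parameter and, more importantly, you leave the argument unfinished. The paper uses a \emph{single} $k$ for both segments, obtaining
\[
  \bigl|\,\a^+(q,n)-k\,\a^+(p_1,n_1)-k\,\a^+(p_2,n_2)\,\bigr|\le \gamma_\a\e^\sigma+2M_\e m_\a,
\]
and the analogous bound for $\b$; the entire right-hand side (gap contribution included) is treated as one constant independent of $k$. From \eqref{plus minus} the quantities $\a^+(q,n)\mp\b^+(q,n)$ then differ from $2k\,\a^+(p_2,n_2)$ and $2k\,\a^+(p_1,n_1)$ by bounded errors, and the paper concludes in one line that a sufficiently large $k$ makes both nonzero. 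Your proposal instead launches into a Diophantine discussion of $\{2k\theta_i\bmod 2\pi\}$ and then explicitly stops, saying that controlling the gap errors ``is the delicate point of the proof'' and offering only heuristic suggestions for how one might proceed. That is the genuine gap in your write-up: you have correctly isolated the issue but not resolved it, so what you have is an outline rather than a proof. The paper's route avoids your two-parameter bookkeeping and the attempted equidistribution analysis altogether.
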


\begin{proof}
Suppose that there exist two such points 
$p_1=f^{n_1}p_1$ and $p_2=f^{n_2}p_2$
with $\det C(p_1)>0$ and $\det C(p_2)<0$.
Then by the above argument, 
\begin{equation} \label{plus minus}
  \a^+(p_1,n_1)=\b^+(p_1,n_1) \quad \text{and} \quad
  \a^+(p_2,n_2)=-\b^+(p_2,n_2).
\end{equation}
We use the Specification Property, Theorem \ref{specification}.
We consider two orbit segments
\begin{equation} \label{segments}
  \{p_1, fp_1, \dots , f^{kn_1-1}p_1\} \quad\text{and}\quad 
  \{p_2, fp_2, \dots , f^{kn_2-1}p_2\}.
\end{equation}
Let $\e>0$. Then there exists $M_\e$ independent of $k$
and a periodic point $q$ such that
\begin{equation} \label{shadows}
\begin{aligned}
  &\dist \,(f^i q, f^i p_1) <\e \quad\text{for } i=0,\dots , kn_1-1, \\
 &  \dist \,(f^{kn_1+M_\e+i }q, f^i p_2) <\e 
      \quad\text{for } i=0,\dots , kn_2-1, \\
   & f^n q=q, \;\text{ where }n=kn_1+kn_2+2M_\e .  
      \end{aligned} 
\end{equation}

Let $\sigma$ be a H\"older exponent of $\a$ and $\b$,
$m_\a=\max_\M |\a(x)|$, and $m_\b=\max_\M |\b(x)|$.
Then it is easy to see using Lemma \ref{close orbits sum}
that 
$$
\begin{aligned}
 & | \a^+(q,n) - k \cdot\a^+(p_1, n_1) -k \cdot\a^+(p_2, n_2) | 
  \le  \gamma_\a \e^\sigma + 2 M_\e m_\a \quad\text{and}\\
   & | \b^+(q,n) - k \cdot\b^+(p_1, n_1) -k \cdot\b^+(p_2, n_2) | 
  \le  \gamma_\b \e^\sigma + 2 M_\e m_\b,
\end{aligned} 
$$
where constants $\gamma_\a$ and $\gamma_\b$
are independent of $k$. By \eqref{plus minus} one can choose a 
sufficiently large $k$ so that $\a^+(q,n)\ne\b^+(q,n)$ and 
$  \a^+(q,n)\ne -\b^+(q,n)$, and hence $\A(q,n)$ and $\B(q,n)$ 
are not conjugate. 
\end{proof}

This completes the proof for the case of two model 
conformal cocycles.
 Suppose that $A$ and $B$ are two cocycles of type III
 satisfying Condition \ref{nec 1}. Then by  
 Theorem~\ref{reduction}, $A$ and $B$ are H\"older cohomologous 
 to model conformal cocycles $A'$ and $B'$, 
and it is easy to see that $A'$ and $B'$
also satisfy Condition \ref{nec 1}.  Hence $A'$ and $B'$ 
are H\"older 
cohomologous, and so are $A$ and $B$.
The result for cocycles of type II follows  similarly from 
Theorem  \ref{reduction} and Proposition \ref{scalar}.
$\QED$


\subsection{Proof of Theorem \ref{periodic cont}}

In the rest of this section, we consider

\begin{condition}  \label{nec 2}  
For every periodic point $p=f^np$
 there exists  $C(p)\in GL(2,\R)$ such that 
 $\A (p,n)=C(p) \B (p,n) C^{-1}(p)$,
 and $C(p)$ is continuous at a point $p_0$.
\end{condition}

\begin{proposition} \label{periodic triang}
Let $A$ and $B$ be triangular cocycles as in 
Proposition \ref{triangular}. \\ If $A$ and $B$  satisfy  
Condition \ref {nec 2}, 
then $A$ and $B$ are H\"older cohomologous.
\end{proposition}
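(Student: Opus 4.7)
My plan is to extract from Condition \ref{nec 2} a single scalar invariant $c(p)$ attached to periodic points, prove it is constant by combining the Specification Property with continuity of $C(p)$ at $p_0$, and close with the Liv\v{s}ic theorem and Proposition \ref{triangular}(ii). First I reduce to $k\equiv l$: comparing traces of $\A(p,n)$ and $\B(p,n)$ gives $k^\times(p,n)=l^\times(p,n)$ for every periodic $p$, so by the multiplicative version of Theorem \ref{Liv main} there is a H\"older $\varphi>0$ with $k/l=(\varphi\circ f)/\varphi$. Replacing $B$ by the H\"older-cohomologous cocycle $\tilde B(x)=\varphi(fx)B(x)\varphi(x)^{-1}$, whose periodic data coincides with that of $B$ and whose associated conjugacy $\varphi(p)^{-1}C(p)$ is still continuous at $p_0$, I may assume from now on that $k\equiv l$.

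For each periodic $p$ with $\beta^+(p,n)\neq 0$, both $\A(p,n)$ and $\B(p,n)$ are nontrivial scalar-plus-nilpotent matrices whose unique invariant line is $\mathrm{span}(e_1)$, and since conjugation preserves being scalar, $\alpha^+(p,n)\neq 0$ as well. Hence $C(p)$ preserves $\mathrm{span}(e_1)$ and is upper triangular, $C(p)=\left[\begin{array}{cc}a(p) & b(p) \\ 0 & d(p)\end{array}\right]$; a direct computation yields $\alpha^+(p,n)=c(p)\,\beta^+(p,n)$ with $c(p):=a(p)/d(p)$, and this ratio is invariant under left or right multiplication of $C(p)$ by any element of the (common) centralizer of $\A(p,n)$ and $\B(p,n)$, so it does not depend on the choice of $C(p)$.

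The main step is proving $c(p)$ is constant. Fix a periodic $p_1$ of period $n_1$ with $\beta^+(p_1,n_1)\neq 0$; such $p_1$ exists because $\beta$ is not cohomologous to $0$. Given $\delta>0$, use continuity of $C$ at $p_0$ to choose $\epsilon>0$ so that $\dist(p,p_0)<\epsilon$ implies $\|C(p)-C(p_0)\|<\delta$, and let $M_\epsilon$ be as in Theorem \ref{specification}. For each $k\in\N$, Specification produces a periodic $q_k$ of period $n_k=1+kn_1+2M_\epsilon$ with $\dist(q_k,p_0)<\epsilon$ that $\epsilon$-shadows $\{p_1,\ldots,f^{kn_1-1}p_1\}$. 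Lemma \ref{close orbits sum} yields
\[
\alpha^+(q_k,n_k)=k\,\alpha^+(p_1,n_1)+O_\epsilon(1),\qquad \beta^+(q_k,n_k)=k\,\beta^+(p_1,n_1)+O_\epsilon(1).
\]
For all large $k$, $\beta^+(q_k,n_k)\neq 0$, so $C(q_k)$ is upper triangular and $c(q_k)=\alpha^+(q_k,n_k)/\beta^+(q_k,n_k)\to c(p_1)$ as $k\to\infty$. On the other hand $\|C(q_k)-C(p_0)\|<\delta$ for every such $k$, forcing the lower-left entry of $C(p_0)$ to lie within $\delta$ of $0$; letting $\delta\to 0$ shows $C(p_0)$ itself is upper triangular with nonzero diagonals, so $c(p_0):=a(p_0)/d(p_0)$ is well defined and $|c(q_k)-c(p_0)|$ is controlled by a function of $\delta$ tending to $0$. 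Combining the two estimates yields $c(p_1)=c(p_0)=:c$, independent of $p_1$.

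For periodic $p$ with $\beta^+(p,n)=0$, $\B(p,n)$ is scalar, hence so is $\A(p,n)$, and $\alpha^+(p,n)=0$; thus $\alpha^+(p,n)-c\beta^+(p,n)=0$ for every periodic $p$. Theorem \ref{Liv main} applied to $\alpha-c\beta$ produces a H\"older $s$ with $\alpha(x)-c\beta(x)=s(fx)-s(x)$, and $c\neq 0$ since $\alpha$ is not cohomologous to $0$. Proposition \ref{triangular}(ii) then delivers the desired H\"older cohomology between $A$ and the modified $B$, hence between $A$ and the original $B$. The main obstacle is the third paragraph: Condition \ref{nec 2} does not a priori assert that $C(p_0)$ is upper triangular (indeed $p_0$ need not be periodic), yet constancy of $c$ requires $c(p_0)$ to be meaningful; this is recovered by the uniform upper-triangularity of the specification-produced sequence $q_k$, which is inherited by the continuous limit $C(p_0)$.
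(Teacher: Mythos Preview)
Your proof is correct and follows essentially the same strategy as the paper: both arguments identify the ratio $c(p)=\alpha^+(p,n)/\beta^+(p,n)$ as the key invariant extracted from the (necessarily upper triangular) conjugacy $C(p)$, use the Specification Property to propagate this ratio from a given periodic orbit to periodic points near $p_0$, and conclude via Theorem~\ref{Liv main} and Proposition~\ref{triangular}(ii).

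The differences are stylistic rather than substantive. The paper argues by contradiction: assuming two periodic points $p_1,p_2$ carry distinct ratios $c_1\neq c_2$, Specification produces, near \emph{every} $z\in\M$, periodic points with ratio close to $c_1$ and others close to $c_2$, so the ratio---and hence $C$---has no limit anywhere, contradicting continuity at $p_0$. You instead argue directly, shadowing a single orbit to force $c(q_k)\to c(p_1)$ while continuity forces $c(q_k)\to c(p_0)$, so $c(p_1)=c(p_0)$ for every $p_1$. Your route requires the intermediate observation that $C(p_0)$ is itself upper triangular (so that $c(p_0)$ makes sense), which you correctly recover as a limit of the upper triangular $C(q_k)$; the paper's contradiction argument sidesteps this by never needing to evaluate the ratio at $p_0$. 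Your preliminary reduction to $k\equiv l$ is harmless but unnecessary: the paper simply records $k^\times(p,n)=l^\times(p,n)$ and applies Liv\v{s}ic to $k/l$ at the end, alongside $\alpha-c\beta$.
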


\begin{proof}
For every periodic point $p=f^np$ we have
 $$
    \A (p,n)= k^\times(p,n)\left[ \begin{array}{cc} 1 & \a^+(p,n) \\  
 0 & 1 \end{array} \right] \;\text{ and}\quad
   \B (p,n)= l^\times(p,n)\left[ \begin{array}{cc} 1 & \b^+(p,n) \\  
 0 & 1 \end{array} \right].
$$
Since the  matrices are conjugate,  $k^\times (p,n)=l^\times (p,n)$.
Below we show that there exists a constant $c$
such that $\a^+(p,n)=c\b^+(p,n)$ whenever $f^np=p$.
By Theorem~\ref{Liv main}, $k(x)/l(x)=\va(fx)/\va(x)$
and $\a(x)-c\b(x)=s(fx)-s(x)$ for H\"older
functions $\va$ and $s$, and hence $A$ and $B$ are 
H\"older cohomologous by Proposition \ref{triangular}.

\vskip.1cm

Suppose that there exist points $p_1=f^{n_1}p_1$
and $p_2=f^{n_2}p_2\,$ such that 
\begin{equation} \label{c1 and c2}
\a^+(p_{1,2},n_{1,2})\ne 0,\;\; \b^+(p_{1,2},n_{1,2})\ne 0, 
  \quad \frac{\a^+(p_1,n_1)}{\b^+(p_1,n_1)}=c_1 \ne c_2=
   \frac{\a^+(p_2,n_2)}{\b^+(p_2,n_2)}.
\end{equation}
Let $z\in \M$ and  $\e>0$.
We consider two orbit segments: $\{z\}$ and 
$\{p_1, fp_1, ...  f^{kn_1-1}p_1\}$. By the Specification Property,
 there exists  a number $M_\e$ independent of $k$
and  a point $q_1= f^{t_1}q_1$, where $t_1=kn_1+2M_\e+1$, 
such that
$$
    \dist (q_1,z)<\e \quad\text{and}\quad
    \dist (f^{M_\e+1+i}q_1,\, f^i p_1) <\e
    \;\text{ for }i =0, \dots, kn_1-1.
$$
\vskip.1cm

Let $\sigma$ be a H\"older exponent of  $\a$ and $\b$, 
$m_\a =\max_\M |\a(x)|$ and $m_\b =\max_\M |\b(x)|$.
It follows easily from Lemma \ref{close orbits sum} that 
there exists constants
$\gamma_\a$ and $\gamma_\b$ independent of $k$ such that
$$
\begin{aligned}
&| \, \a^+(q_1, t_1) -  \,k \cdot
  \a^+(p_1,n_1) \, |
 \le \gamma_\a \e^\sigma + (2M_\e+1)m_\a 
 \;\;\text{ and}\\
&| \,  \b^+(q_1, t_1)  -  \,k \cdot
  \b^+(p_1,n_1) \, |
 \le \gamma_\b \e^\sigma + (2M_\e+1)m_\b.
\end{aligned}
$$
Let $\delta=\frac13 |c_1-c_2|$. 
Since $\a^+(p_1,n_1)\ne 0$ and $\b^+(p_1,n_1)\ne 0$, 
by choosing a sufficiently large $k$,
we can ensure that $\a^+(q_1, t_1)\ne 0$,
$\,\b^+(q_1, t_1)\ne 0$, and
$$
\left| \, \frac{\a^+(q_1, t_1)}{\b^+(q_1, t_1)}-
c_1\,\right|  = \left| \,
\frac{\a^+(q_1,t_1)}{\b^+(q_1, t_1)}-
\frac {k\cdot \a^+(p_1,n_1)}{k \cdot  \b^+(p_1,n_1)} \,\right| 
<\delta.
$$

Similarly, using the orbit of $p_2$,
in an $\e$-neighborhood of $z$ we can find a periodic 
point $q_2$ of a period $t_2$ such that 
$\a^+(q_2, t_2)\ne 0$, 
$\b^+(q_2, t_2)\ne 0$, and the ratio
$\a^+(q_2, t_2)/\b^+(q_2, t_2)$ is $\delta$-close to $c_2$.
It follows that at every point $z\in \M$, the ratio 
$\a^+(p,n)/\b^+(p,n)$ has no limit, 
and in particular the ratio is discontinuous
at every periodic point.
\vskip.1cm

Suppose that $\a^+(p,n)\ne 0$ and $\b^+(p,n)\ne 0$ 
and $\A(p,n)=C(p)\B(p,n)C(p)^{-1}.$   
Then $C(p)$ is upper triangular, and a direct calculation 
shows that it is of the form
  $$
    C(p)= \va(p) \left[ \begin{array}{cc} 
    \a^+(p,n)/\b^+(p,n) & d(p) \\ 0 & 1
   \end{array} \right]. 
$$
Therefore, discontinuity of the ratio $\a^+(p,n)/\b^+(p,n)$
implies discontinuity of $C$.

Thus no two periodic points satisfy \eqref{c1 and c2}.
It follows  that there exists a constant $c$ 
such that $\a^+(p,n)=c\b^+(p,n)$ at every periodic point,
and hence $A$ and $B$ are H\"older cohomologous.
\end{proof}
\vskip.2cm


Next we show that cocycles $A$ and $B$ of different types, 
as in Theorem \ref{reduction}, cannot satisfy Condition \ref{nec 2}. 
Clearly, this is the case for cocycles with different number
of invariant sub-bundles. The following lemma establishes
this for cocycles with different orientability types of
invariant sub-bundles.

\begin{lemma} \label{not conjugate}
Let $A$ be a cocycle of type I (II) and $B$ be a cocycle
of type I$\,'$ (II$\,'$). Then $A$ and $B$ do not satisfy 
Condition \ref{nec 2}.
\end{lemma}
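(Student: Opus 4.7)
I will argue by contradiction, treating the case where $A$ is of type I and $B$ is of type I$\,'$; the II, II$\,'$ case is parallel, with scalar models in place of triangular ones. The plan is to pass to the double cover on which $V_B$ becomes orientable, reduce to the model cocycles there, and combine the resulting H\"older cohomology with Remark \ref{not sim} and the connectedness of the cover to force a contradiction.

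First, Theorem \ref{reduction} gives H\"older conjugacies $A=C_A(f\cdot)\,A'\,C_A^{-1}$ on $\M$ and $\tilde B=\tilde C_B(\tilde f\cdot)\,\tilde B'\,\tilde C_B^{-1}$ on the double cover $P:\tilde\M\to\M$ from Lemma \ref{double}, with $\tilde C_B(\iota y)=-\tilde C_B(y)$ by \eqref{C pm}, where $\iota$ is the deck transformation. Setting $\tilde D'(\tilde p)=C_A(P\tilde p)^{-1}C(P\tilde p)\tilde C_B(\tilde p)$, one checks that $(\tilde A',\tilde B')$ satisfies Condition \ref{nec 2} on $\tilde\M$ via $\tilde D'$, which is continuous at each lift of $p_0$. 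On $\tilde\M$, $\tilde A'$ and $\tilde B'$ are pullbacks of orientable triangular (or scalar) models from $\M$, and one verifies that $\a$ not being cohomologous to zero on $\M$ implies $\tilde \a=\a\circ P$ is not cohomologous to zero on $\tilde\M$ by averaging a putative coboundary over $\iota$. Proposition \ref{periodic triang} (or Proposition \ref{conf conj} in the scalar case) then produces a H\"older conjugacy $E:\tilde\M\to GL(2,\R)$ with $\tilde A'=E(\tilde f\cdot)\tilde B'\,E^{-1}$.

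Since $\tilde A'$ and $\tilde B'$ are both $\iota$-invariant, $E\circ\iota$ is another conjugacy, so $E(\iota y)=E(y)\,H_0$ for some $H_0\in Z(\tilde B')$. By Propositions \ref{triangular}(iii) and \ref{scalar}(ii), $H_0$ is a constant matrix: an upper triangular matrix with equal diagonal entries in the triangular case, or a scalar multiple of the identity in the scalar case. Let $p_*=f^{n_*}p_*$ be the odd-period point from the proof of Remark \ref{not sim}, so its lifts $q_1,\,q_2=\iota q_1$ are swapped by $\tilde f^{n_*}$. Writing the cocycle relation $\tilde\A'(y,n_*)=E(\tilde f^{n_*}y)\tilde\B'(y,n_*)E(y)^{-1}$ at $y=q_1$ and at $y=q_2$ yields the two identities
\[
\A'(p_*,n_*)=E(q_1)H_0\,\B'(p_*,n_*)\,E(q_1)^{-1}=E(q_1)\,\B'(p_*,n_*)\,H_0^{-1}\,E(q_1)^{-1},
\]
hence $H_0\,\B'(p_*,n_*)=\B'(p_*,n_*)\,H_0^{-1}$. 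A short calculation with the explicit form of $H_0$ then forces $H_0=\pm\Id$ in both cases.

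If $H_0=\Id$, then $E$ descends to a H\"older cohomology $A'\sim B'$ on $\M$; composing with $A\sim A'$ and using Condition \ref{nec 1}, one obtains that $\B(p,n)$ and $\B'(p,n)$ are $GL(2,\R)$-conjugate for every periodic $p$, contradicting Remark \ref{not sim}. If $H_0=-\Id$, then $E\circ\iota=-E$; in the triangular case, $E$ is upper triangular because both $\tilde A'$ and $\tilde B'$ preserve the coordinate sub-bundle, and each nonzero diagonal entry is a continuous nowhere vanishing function on the connected cover $\tilde\M$ satisfying $e_i\circ\iota=-e_i$, which is impossible since $\iota$ acts freely on a connected space; in the scalar case, $E=\va\, C_0$ by Proposition \ref{scalar} and the same obstruction applies to $\va$. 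The connectedness of $\tilde\M$ here is exactly the non-orientability of $V_B$. The main technical step is the $H_0=\pm\Id$ computation, which rests both on the two-endpoint cocycle relation at $q_1$ and $q_2$ and on the explicit description of $Z(\tilde B')$.
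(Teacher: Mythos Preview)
Your argument is essentially correct and follows a genuinely different route from the paper. The paper does not analyze the deck-transformation action on $E$ at all; instead it proves a separate lemma (Lemma~\ref{model conj}) stating that any H\"older conjugacy between the lifts $\tilde A'$ and $\tilde B'$ of two model triangular cocycles projects to $\M$. That lemma is proved through periodic data: from $\tilde\a^+(q,m)=c\,\tilde\b^+(q,m)$ for all $\tilde f$-periodic $q$ one reads off $\a^+(p,n)=c\,\b^+(p,n)$ for all $f$-periodic $p$, so $A'\sim B'$ on $\M$ by Proposition~\ref{triangular}, and uniqueness of the conjugacy up to a constant then forces the upstairs conjugacy to project. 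Your approach is instead topological: you study $H_0=E(y)^{-1}E(\iota y)\in Z(\tilde B')$, reduce to $H_0=\pm\Id$, and use connectedness of $\tilde\M$ to rule out the $-\Id$ case. Both routes arrive at $A'\sim B'$ on $\M$ and then finish identically via Remark~\ref{not sim}. The paper's route is a bit shorter and packages the descent as a reusable lemma; yours is more self-contained and makes the role of non-orientability (via connectedness of the cover) explicit.

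One correction: your periodic-point identity $H_0\,\B'(p_*,n_*)=\B'(p_*,n_*)\,H_0^{-1}$ does force $H_0=\pm\Id$ in the triangular case, but in the scalar case $\B'(p_*,n_*)$ is central and the identity only yields $H_0^2=\Id$, which does not single out $\pm\Id$ among constant matrices. The fix is already implicit later in your write-up: in the scalar case $E=\va\,C_0$ by Proposition~\ref{scalar}(i), and $E(\iota y)=E(y)H_0$ then gives $H_0=(\va(\iota y)/\va(y))\,\Id$, hence $H_0=\pm\Id$. In fact you can bypass the periodic point entirely: $H_0^2=\Id$ follows from $\iota^2=\text{id}$, and in the triangular case the only involutions in $Z(\tilde B')$ are $\pm\Id$.
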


\begin{proof} 
Suppose that cocycles $A$ of type I and $B$  of type I$'$ 
satisfy Condition \ref{nec 2}. By Theorem \ref{reduction},
there exist  model triangular cocycles $A'$ and $B'$ such that
$A'$ is H\"older cohomologous to $A$, 
and the lifts $\tilde B$ of $B$ and $\tilde B'$ of $B'$
to a double cover 
$\tilde \M$ are H\"older cohomologous. 
Clearly, the lifts $\tilde A$ of $A$ and $\tilde A'$ of $A'$ to 
$\tilde \M$ are also H\"older  cohomologous.
Since $A$ and $B$ satisfy Condition \ref{nec 2}, 
so do $\tilde A$ and $\tilde B$ and hence  
the model cocycles $\tilde A'$ and $\tilde B'$.
By Proposition \ref{periodic triang}, the cocycles
$\tilde A'$ and $\tilde B'$ are H\"older cohomologous, 
and it follows from Lemma \ref{model conj} below that
so are $A'$ and $B'$.
Thus the cocycle $B$ of type I$'$ and its 
model $B'$ have conjugate periodic data,
which contradicts Remark \ref{not sim}. 
A similar argument yields the result for cocycles of types 
II and II$'$.
\end{proof}

\begin{lemma} \label{model conj} 
Let $A$ and $B$ be model triangular cocycles as 
in Proposition \ref{triangular} and let $\tilde A$ and 
$\tilde B$ be their lifts to the same double cover. 
Then a H\"older conjugacy between $\tilde A$ and 
$\tilde B$ projects to a H\"older conjugacy
between $A$ and $B$.

\end{lemma}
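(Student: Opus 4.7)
My plan is to push the H\"older cohomology from $\tilde \M$ down to $\M$ using Proposition~\ref{triangular}(ii) at both levels, linked by Liv\v{s}ic's theorem applied to the coboundary data the proposition furnishes. The starting point is that $\tilde A$ and $\tilde B$ are themselves model triangular cocycles on $\tilde \M$: the triangular form is preserved under pullback by $P$, and the non-coboundary condition on $\tilde \a = \a \circ P$ follows because a H\"older transfer $\tilde \a = \tilde \psi \circ \tilde f - \tilde \psi$ could be symmetrized over the two sheets of $P$ to produce a H\"older function $\psi$ on $\M$ cobounding $\a$, contradicting the standing hypothesis on $\a$; similarly for $\tilde \b$.

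I then apply Proposition~\ref{triangular}(ii) on $\tilde \M$ to the given H\"older conjugacy $\tilde C$ to obtain a constant $c \ne 0$ and H\"older functions $\tilde \va : \tilde \M \to \R \setminus \{0\}$ and $\tilde s : \tilde \M \to \R$ with
\begin{equation*}
\frac{k(Py)}{l(Py)} = \frac{\tilde \va(\tilde f y)}{\tilde \va(y)}, \qquad \a(Py) - c\,\b(Py) = \tilde s(\tilde f y) - \tilde s(y), \qquad y \in \tilde \M.
\end{equation*}
The main step is to verify that $(k/l)^\times(p,n) = 1$ and $(\a - c\b)^+(p,n) = 0$ for every periodic point $p = f^n p$ of $f$. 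Fix such a $p$ and pick a lift $y \in P^{-1}(p)$. Since $\tilde f^n y \in P^{-1}(p)$ and this fiber has two elements, there is $m \in \{1,2\}$ with $\tilde f^{mn} y = y$. Telescoping the two displayed identities along this $\tilde f$-closed orbit gives
\begin{equation*}
\bigl((k/l)^\times(p,n)\bigr)^m = \frac{\tilde \va(y)}{\tilde \va(y)} = 1, \qquad m \cdot (\a - c\b)^+(p,n) = \tilde s(y) - \tilde s(y) = 0.
\end{equation*}

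The additive identity yields $(\a - c\b)^+(p,n) = 0$ at once. For the multiplicative one I observe that $\tilde \va$ is nonvanishing and H\"older on the connected cover $\tilde \M$, hence of constant sign; so $\tilde \va(\tilde f y)/\tilde \va(y) > 0$ everywhere, and surjectivity of $P$ forces $k/l > 0$ on all of $\M$. Positivity then promotes $((k/l)^\times(p,n))^m = 1$ to $(k/l)^\times(p,n) = 1$. Theorem~\ref{Liv main} and its multiplicative counterpart now supply H\"older functions $\va : \M \to \R_{>0}$ and $s : \M \to \R$ with $k/l = (\va \circ f)/\va$ and $\a - c\b = s \circ f - s$, and Proposition~\ref{triangular}(ii), read in the converse direction on $\M$, assembles these into a H\"older conjugacy $C(x) = \va(x)\left[\begin{array}{cc} c & s(x) \\ 0 & 1 \end{array}\right]$ between $A$ and $B$, as desired. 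The one delicate point is the case split on the $\tilde f$-period of $y$, which is cleanly absorbed by the common exponent $m \in \{1,2\}$: multiplicatively via positivity of $k/l$, additively via division by $m$.
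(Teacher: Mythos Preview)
Your argument correctly establishes that $A$ and $B$ are H\"older cohomologous on $\M$, and the route --- telescoping Proposition~\ref{triangular}(ii) over lifted periodic orbits, handling the period--doubling via $m\in\{1,2\}$, and invoking Liv\v{s}ic --- is essentially the paper's own approach. However, you have not proved what the lemma actually asserts. The statement says that \emph{the given conjugacy $\tilde C$ projects}, i.e.\ $\tilde C(y_1)=\tilde C(y_2)$ whenever $P(y_1)=P(y_2)$; this literal descent is precisely what is used later in the proof of Theorem~\ref{periodic cont}, where one needs $C'(y_1)=C'(y_2)$ for the composite $C'=\tilde C_A\tilde C\tilde C_B$ in order to conclude that the middle factor $\tilde C$ descends. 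Your proof only produces \emph{some} conjugacy $C$ on $\M$, built from Liv\v{s}ic data $\va,s$ that a priori need not coincide with $\tilde\va,\tilde s$ pushed down.

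The missing step is short and is exactly the centralizer argument the paper supplies at the end of its proof. Lift your $C$ to $\tilde C'=C\circ P$, which is a H\"older conjugacy between $\tilde A$ and $\tilde B$. By Proposition~\ref{triangular}(iii) applied on $\tilde\M$ (your first paragraph verifies the hypotheses there), the centralizer of $\tilde A$ consists of constant upper--triangular matrices with equal diagonal entries; hence by Section~\ref{centralizers} one has $\tilde C=D\cdot\tilde C'$ for some constant matrix $D$. Since $\tilde C'$ is deck--invariant and $D$ is constant, $\tilde C$ is deck--invariant as well and therefore projects to the H\"older conjugacy $D\cdot C$ between $A$ and $B$.
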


\begin{proof} We denote the lifts of $\a$ and  $\b$ 
by $\tilde\a$ and  $\tilde \b$. 
Since $\tilde A$ and $\tilde B$ are H\"older cohomologous,  
$\tilde \a^+(q,m)=c\tilde \b^+(q,m)$
whenever $\tilde f^m q=q \in \tilde \M.$
Let $p=f^{n}p\in \M$ and  let  $q\in \tilde \M$ be such that $p=P(q)$.   
Then $q=\tilde f ^{m}(q)$ where $m$ is either $n$ or $2n$, 
and  it follows that  $\a^+(p,n)=c\b^+(p,n)$.
Similarly,  $k^\times(p,n)=l^\times(p,n)$
whenever $f^np=p$. Thus $A$ and $B$ are H\"older 
cohomologous.
The discussion in Section \ref{centralizers} implies
that a conjugacy between $A$ and $B$, as well as  between 
$\tilde A$ and $\tilde B$, is unique up to 
multiplication by a constant  matrix.
Hence a conjugacy between $A$ and $B$ is the projection
of a conjugacy between $\tilde A$ and $\tilde B$.
\end{proof}

We conclude that if $A$ and $B$ satisfy Condition \ref{nec 2},
then they are of the same type. By Proposition \ref{conf conj}, 
it remains to consider cocycles of types I, I$'$, and II$'$.
If $A$ and $B$ are of type I, they are H\"older cohomologous to 
model triangular cocycles $A'$ and $B'$, respectively. It follows 
that  $A'$ and $B'$ also satisfy Condition \ref{nec 2}. By  
Proposition~\ref{periodic triang}, $A'$ and $B'$ 
are H\"older cohomologous, and hence so are $A$ and $B$.

Let $A$ and $B$ be cocycles of type I$'$. It follows easily 
from Lemma \ref{not conjugate} that their invariant sub-bundles
can be made orientable by passing to the same double cover $\tilde \M$.
The lifts  $\tilde A$ and $\tilde B$  to  $\tilde \M$
are of type I and   satisfy Condition \ref{nec 2}, 
 and hence  are 
H\"older cohomologous.
 By Theorem \ref{reduction},
there exist model triangular cocycles $A'$ and $B'$
whose lifts to $\tilde \M$ are H\"older cohomologous 
to  $\tilde A$ and $\tilde B$  respectively. Thus we have
$$
\begin{array}{ccccccc}
\tilde A' &  \overset{\tilde C_A}{\sim} & \tilde A & \overset{\tilde C}{\sim} 
               &  \tilde B &  \overset{\tilde C_B}{\sim}   & \tilde B' \\
 \downarrow  &    &  \downarrow  &    
           &  \downarrow  &  & \downarrow  \\
 A' &  &  A &  &  B &  &  B'  
\end{array}
$$
Let  $y_1,y_2\in \tilde \M$ be such that $P(y_1)=P(y_2)$. By \eqref{C pm}, 
$\;\tilde C_A(y_1)=-\tilde C_A(y_2)$ and $\tilde C_B(y_1)=-\tilde C_B(y_2)$.
By Lemma \ref{model conj},
the conjugacy $C'=\tilde C_A\tilde C\tilde C_B$ between $\tilde A'$ and 
$\tilde B'$ projects to $\M$, which means that 
$C'(y_1)=C'(y_2)$. Thus, $\tilde C(y_1)=\tilde C(y_2)$ and hence 
$\tilde C$ projects to a conjugacy between $A$ and $B$.

A similar argument yields the result for cocycles of type II$'$.
$\QED$


\section{Proof of Theorem \ref{2 sub-bundles}}
\label{sec diff moduli}

\vskip.2cm

First we discuss cohomology of diagonal cocycles
that serve as models for cocycles with two transverse 
invariant sub-bundles. We denote the coordinate sub-bundles
by $\E_1$ and $\E_2$, and we denote a diagonal matrix with 
entries $\a_1, \a_2$ by $\text{diag}\,(\a_1, \a_2)$.

\begin{lemma} \label{E1,E2}
Let $A(x)=\text{diag}\,(\a_1(x), \a_2(x))$, where 
 $\a_{1,2}(x)\ne 0$. 
 $A$ preserves a measurable sub-bundle 
other than $\E_1$ and $\E_2$   if and only if 
$\a_1(x)/\a_2(x)=s(fx)/s(x)$ for a H\"older function $s(x)$,  equivalently,
$A(x)$ is H\"older cohomologous to  $\a_1(x)\Id$.
\end{lemma}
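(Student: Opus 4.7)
The plan is to prove the equivalence of three statements: (A) $A$ preserves a measurable sub-bundle other than $\E_1,\E_2$; (B) $\a_1(x)/\a_2(x)=s(fx)/s(x)$ for some H\"older $s$; and (C) $A$ is H\"older cohomologous to $\a_1(x)\Id$. The standing orientation-preserving assumption gives $\a_1(x)\a_2(x)=\det A(x) > 0$, so $\a_1/\a_2$ is a strictly positive H\"older function bounded away from $0$ and $\infty$. The two easy implications are $(B)\Rightarrow(C)\Rightarrow(A)$; the substantive one is $(A)\Rightarrow(B)$, and it will be handled by the measurable Liv\v{s}ic theorem.

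For $(B)\Rightarrow(C)$, suppose $\a_1/\a_2=s(fx)/s(x)$ with $s$ H\"older. The sign of $s$ is $f$-invariant, hence constant by ergodicity, so I may take $s>0$. Setting $C(x)=\text{diag}(1,1/s(x))$, a direct computation gives
$$
C(fx)\,[\a_1(x)\Id]\,C(x)^{-1}=\text{diag}\bigl(\a_1(x),\,\a_1(x)s(x)/s(fx)\bigr)=A(x),
$$
which proves (C). For $(C)\Rightarrow(A)$, note that $\a_1(x)\Id$ preserves every line in $\R^2$; pushing each constant sub-bundle through the conjugacy yields H\"older invariant sub-bundles of $A$, and they cannot all agree with $\E_1$ or $\E_2$ since the conjugacy is invertible.

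For $(A)\Rightarrow(B)$, let $\V$ be a measurable $A$-invariant sub-bundle with $\V\ne\E_1,\E_2$. Proposition \ref{properties}\,(i) promotes $\V$ to a H\"older continuous sub-bundle. The closed $f$-invariant coincidence sets $X_i=\{x:\V(x)=\E_i(x)\}$ must have measure zero: otherwise ergodicity would force them to have full measure and, being closed with respect to a full-support measure, they would equal all of $\M$, contradicting $\V\ne\E_i$. On the dense open $f$-invariant set $\M\setminus(X_1\cup X_2)$ of full measure I parametrize $\V(x)=\text{span}\{(1,t(x))\}$ with $t$ H\"older and nonvanishing; the invariance $A(x)\V(x)=\V(fx)$ forces $t(fx)=(\a_2/\a_1)(x)\,t(x)$. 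Since $\a_2/\a_1>0$ the sign of $t$ is $f$-invariant, hence constant by ergodicity, and I may take $t>0$. Passing to logarithms yields the additive cohomological equation
$$
\log(\a_1/\a_2)(x)=\psi(fx)-\psi(x)\text{ a.e., with }\psi:=-\log t,
$$
whose left side is H\"older. Theorem \ref{Liv meas} then produces a H\"older $\tilde\psi$ coinciding with $\psi$ a.e.\ and satisfying the equation everywhere, so $s:=e^{\tilde\psi}$ is the desired positive H\"older solution of $\a_1/\a_2=s(fx)/s(x)$.

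The main technical obstacle is that $\V$ may genuinely coincide with $\E_1$ or $\E_2$ on a nonempty closed nowhere-dense set, obstructing a global coordinate parametrization. The ergodicity-plus-full-support dichotomy above confines this coincidence set to measure zero, which is precisely the regime in which the measurable Liv\v{s}ic theorem upgrades an a.e.\ solution to a genuine H\"older function on all of $\M$, thereby closing the equivalence.
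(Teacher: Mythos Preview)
Your invocation of Proposition~\ref{properties}\,(i) to promote $\V$ to a H\"older sub-bundle is not justified here. That proposition requires Condition~\ref{assumptions} (equal-modulus eigenvalues at every periodic orbit), but Lemma~\ref{E1,E2} lives in the section on cocycles with two transverse invariant sub-bundles---precisely the cocycles that typically violate Condition~\ref{assumptions}. In fact, for the diagonal cocycle $A=\text{diag}(\a_1,\a_2)$, Condition~\ref{assumptions} is equivalent to $(\a_1/\a_2)^\times(p,n)=1$ at all periodic $p$, which by Liv\v{s}ic is exactly statement (B); so your argument is implicitly circular. As written you then rely on continuity of $\V$ to conclude the coincidence sets $X_i$ are closed, and that step collapses.

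The repair is easy and is what the paper does: drop the promotion and work with $\V$ measurable. Each $X_i=\{x:\V(x)=\E_i(x)\}$ is still $f$-invariant (since $A$ preserves $\E_i$), hence by ergodicity has measure $0$ or $1$; the hypothesis $\V\ne\E_i$ in the measurable sense rules out measure $1$. On the full-measure complement your parametrization $\V(x)=\text{span}\{(1,t(x))\}$ gives a merely measurable nonvanishing $t$ satisfying $t(fx)=(\a_2/\a_1)(x)\,t(x)$, your sign argument still applies, and Theorem~\ref{Liv meas} upgrades $\psi=-\log t$ to H\"older exactly as you wrote. Everything else in your proof is correct and matches the paper's approach.
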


\begin{proof}

Let $\V$ be the measurable invariant sub-bundle.
Since the set where $\V$ differs from $\E_1$ and $\E_2$ is invariant,
it is of full measure by ergodicity. Therefore we can write 
$\V(x)$ as the span of 
${\mathbf v}(x)=${\tiny $\left[ \begin{array}{c} s(x) \\ 1 \end{array} \right]$},
where $s$ is a non-zero measurable function.
Then  ${\mathbf v}(fx)=c(x)\cdot A(x) {\mathbf v}(x)$ for a scalar function $c$,
which implies that $\a_1(x)/\a_2(x)=s(fx)/s(x)$ and  
hence $s$ is H\"older. It follows that 
$$
  \a_1(x)\,\Id =C_A(fx) \cdot A(x) \cdot C_A(x)^{-1}, 
  \quad\text{where}\quad C_A(x)= \text{diag}\,(1, s(x)).
$$
Clearly, if $A$ is H\"older cohomologous to a scalar cocycle,
then $A$ preserves infinitely many H\"older continuous sub-bundles.
\end{proof}

Cocycles cohomologous to scalar ones were discussed 
 in the previous sections.

\begin{proposition} \label{diagonal}
Suppose $A(x)=\text{diag}\, (\a_1(x), \a_2(x))$ and 
$\,B(x)=\text{diag}\, (\b_1(x), \b_2(x))$, 
where $\a_{1,2}(x)\ne 0$ and $ \b_{1,2}(x)\ne 0$,
are not cohomologous to scalar cocycles. Then
\begin{itemize}

\item[(i)] $A$ and $B$ are H\"older  cohomologous
if and only if there exist measurable, \\ equivalently H\"older, 
functions $s_1$ and $s_2$ such that either \\
$\a_1(x)/\b_1(x)=s_1(fx)/s_1(x)$ and $\a_2(x)/\b_2(x)=s_2(fx)/s_2(x)$
for all $x$, or\\
$\a_1(x)/\b_2(x)=s_1(fx)/s_1(x)$ and $\a_2(x)/\b_1(x)=s_2(fx)/s_2(x)$
for all $x$.
\vskip.1cm

\item[(ii)] Any measurable conjugacy  between $A$ and
$B$ is H\"older  and either diagonal or anti-diagonal.

\item[(iii)] The centralizer of $A$ consists of all constant diagonal matrices.

\item[(iv)] If $A$ and $B$ have conjugate periodic data,
then they are H\"older cohomologous.

\end{itemize}

\end{proposition}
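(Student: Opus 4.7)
My plan is to derive (i)--(iii) essentially immediately from the preceding structural results, and to reserve the main work for (iv).

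Since neither $A$ nor $B$ is cohomologous to a scalar cocycle, Lemma \ref{E1,E2} shows that the only measurable $A$-invariant sub-bundles are $\E_1$ and $\E_2$, and similarly for $B$. For (ii), given a measurable conjugacy $C$ between $A$ and $B$, Lemma \ref{invariant} and Proposition \ref{properties}(i) force $C\E_1$ and $C\E_2$ to form the pair $\{\E_1,\E_2\}$; by ergodicity the assignment is uniform, so $C$ is either diagonal a.e.\ or anti-diagonal a.e. Writing $C=\text{diag}(s_1,s_2)$ (respectively, anti-diagonal with entries $s_1,s_2$) and expanding $A(x)=C(fx)B(x)C(x)^{-1}$ componentwise yields the multiplicative cohomology equations in (i). Theorem \ref{Liv meas} then upgrades each measurable $s_i$ to a H\"older function, proving (i) and (ii) in one stroke; the reverse direction of (i) is a direct computation. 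Part (iii) follows by applying (ii) with $B=A$: the diagonal case reduces to $s_i(fx)/s_i(x)=1$, so each $s_i$ is constant by ergodicity, while an anti-diagonal self-conjugacy would make $\a_1/\a_2$ a multiplicative coboundary, contradicting Lemma \ref{E1,E2} combined with the non-scalar hypothesis.

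Part (iv) is the main obstacle. Since $\A(p,n)$ and $\B(p,n)$ are both diagonal for periodic $p=f^np$, their conjugacy in $GL(2,\R)$ means one of two alternatives holds at each $p$:
\begin{equation*}
\text{(P1)}\;\;\a_i^\times(p,n)=\b_i^\times(p,n)\text{ for }i=1,2,\quad\text{or}\quad
\text{(P2)}\;\;\a_1^\times(p,n)=\b_2^\times(p,n),\;\a_2^\times(p,n)=\b_1^\times(p,n).
\end{equation*}
Once I show that the same alternative holds at \emph{every} periodic point, the multiplicative version of Theorem \ref{Liv main} produces H\"older functions $s_1,s_2$ satisfying the cohomology equations in (i), and (i) itself concludes the proof.

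Establishing this uniformity is the hard step, and I would handle it by contradiction in the spirit of Lemma \ref{same sign}. First note that periodic points $p$ with $\a_1^\times(p,n)\ne\a_2^\times(p,n)$ must exist, for otherwise multiplicative Liv\v{s}ic would make $\a_1/\a_2$ a coboundary, putting $A$ into the scalar cohomology class by Lemma \ref{E1,E2}; moreover, Theorem \ref{reduction} together with the non-scalar hypothesis rules out $|\a_1^\times(p,n)|=|\a_2^\times(p,n)|$ at all periodic points (otherwise $A$ would be of type II), so points with $|\a_1^\times|\ne|\a_2^\times|$ are available, and similarly for $B$. Suppose $p_1$ satisfies (P1) and $p_2$ satisfies (P2), each with $|\a_1^\times(p_j,n_j)|\ne|\a_2^\times(p_j,n_j)|$. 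Applying Theorem \ref{specification} with $\e>0$ and $k,\ell\in\N$, I obtain a periodic point $q$ of period $n\approx kn_1+\ell n_2$ that $\e$-shadows the orbit of $p_1$ for $k$ revolutions followed by that of $p_2$ for $\ell$ revolutions. Writing $a,b$ for the diagonal entries of $\A(p_1,n_1)$ and $c,d$ for those of $\A(p_2,n_2)$ and substituting (P1), (P2), Corollary \ref{close orbits prod} implies that the diagonal entries of $\A(q,n)$ and $\B(q,n)$ lie within a factor $e^{\pm\gamma\e^\sigma}$ of the pairs $(a^kc^\ell,b^kd^\ell)$ and $(a^kd^\ell,b^kc^\ell)$ respectively. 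The conjugacy of $\A(q,n)$ and $\B(q,n)$ then forces either $(c/d)^\ell$ or $(a/b)^k$ to remain multiplicatively bounded by $e^{2\gamma\e^\sigma}$ uniformly in $k,\ell$; sending $k,\ell\to\infty$ with $|a/b|,|c/d|\ne 1$ produces the desired contradiction and completes the proof.
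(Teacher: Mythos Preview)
Your proof is correct and follows the paper's line: parts (i)--(iii) are reduced via Lemma \ref{E1,E2} to scalar cohomology exactly as the paper does, and (iv) is handled by the same specification argument in the style of Lemma \ref{same sign} (the paper uses the same $k$ for both segments rather than your independent $k,\ell$, a cosmetic difference). One small correction: your citation of Proposition \ref{properties}(i) is misplaced, since that result assumes Condition \ref{assumptions}, which the diagonal cocycles of this section typically violate; the step is unaffected because Lemma \ref{E1,E2}, which you already invoke, by itself identifies $\E_1,\E_2$ as the only measurable $A$-invariant sub-bundles.
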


\begin{proof}

{\bf (i,$\,$ii,$\,$iii)}  If $A(x)=C(fx)B(x)C(x)^{-1}$
for a measurable  function $C$, then  measurable 
sub-bundles $C(\E_1)$ and $C(\E_2)$ are $A$-invariant.
It follows from Lemma \ref{E1,E2} that 
 either $C(\E_1)=\E_1$ and $C(\E_2)=\E_2$, or
$C(\E_1)=\E_2$ and $C(\E_2)=\E_1$. Therefore, 
$C(x)$ is either a diagonal or an
anti-diagonal matrix. This reduces the questions of cohomology 
of $A$ and $B$ 
to that of cohomology of the scalar functions 
$\a_i$ and $\b_i$, and (i), (ii), and (iii) follow easily.

\vskip.2cm

{\bf (iv)}
By (ii) it suffices to show that either 
\begin{equation} \label{a,b}
\begin{aligned} 
  & \a_1^\times (p,n)=\b_1^\times (p,n) \;\text{ and }\;
  \a_2^\times (p,n)=\b_2^\times (p,n) \;\text{ whenever }
  f^np=p, \;\text{ or} \\
  & \a_1^\times (p,n)=\b_2^\times (p,n) \;\text{ and }\;
  \a_2^\times (p,n)=\b_1^\times (p,n) \;\text{ whenever }\;
  f^np=p.
  \end{aligned}
\end{equation}
At every point $p=f^n p\,$ the  eigenvalues  of 
  $\A(p,n)$ and  $\B(p,n)$  are equal, i.e. 
\begin{equation} \label{a_1,a_2}
      \{\,\a_1^\times(p,n), \; \a_2^\times(p,n) \,\}=
       \{\,\b_1^\times(p,n), \; \b_2^\times(p,n) \,\}.
\end{equation}
Suppose that for points $p_1=f^{n_1}p_1$ and 
$p_2=f^{n_2}p_2$ we have 
$$
   \a_1^\times(p_1,n_1)=\b_1^\times(p_1,n_1)\ne \b_2^\times(p_1,n_1)
   \quad \text{and}\quad 
   \a_1^\times(p_2,n_2)= \b_2^\times(p_2,n_2)\ne \b_1^\times(p_2,n_2).
$$
We proceed 
as in the proof of Lemma~\ref{same sign}.
We consider orbit segments as in \eqref{segments}
and a periodic point $q=f^nq$ satisfying
\eqref{shadows}. By Corollary~\ref{close orbits prod}
there exist constants $\gamma$ and $\sigma$ 
independent of $k$ such that 
$$
 e^{-\gamma\e^\sigma} \le 
 \frac{\a_1^\times(q, kn_1)}{\a_1^\times(p_1, kn_1)} \le  
 e^{\gamma\e^\sigma}, \quad\;\;
  e^{-\gamma\e^\sigma} \le 
 \frac{\a_1^\times(f^{kn_1+M_\e}q, kn_2)}{\a_1^\times(p_2, kn_2)} \le  
 e^{\gamma\e^\sigma},
$$
and the same estimates hold with $\b_1$ in place of $\a_1$.
Taking a sufficiently large $k$ ensures  that 
$\a_1^\times(q,m)\ne\b_1^\times(q,m)$. 
A similar argument shows that $\a_1^\times(q,m)\ne \b_2^\times(q,m).$
This  contradicts \eqref{a_1,a_2} and hence
\eqref{a,b} is satisfied.
\end{proof}

Now we complete the proof of the theorem. 
We consider a cocycle $A$ with two H\"older continuous 
transverse sub-bundles $\E^1_A$ and $\E^2_A$.
Example (iii) in Section \ref{non-orientable} shows that the  
sub-bundles are not necessarily orientable.
Clearly, if  one of the two invariant sub-bundles 
is orientable, then so is the other one.

(i)
If the $A$-invariant sub-bundles are orientable, then there exist 
continuous unit vector fields ${\mathbf v}^1$ and ${\mathbf v}^2$ 
spanning $\E^1_A$ and $\E^2_A$ respectively. 
Let $C_A(x)$ be the change of basis matrix from 
$\{ {\mathbf v}^1(x),{\mathbf v}^2(x)\}$ to the standard basis. 
Then $C_A$ is H\"older continuous and 
the cocycle $A'(x)=C_A(fx)A(x)C_A(x)^{-1}$ is diagonal.

If $\E^1_A$ is not orientable, using a double cover 
as in Lemma \ref{double} we obtain as in the proof
of Theorem \ref{reduction} a cocycle $A''$ with two transverse 
orientable invariant sub-bundles
such that the lifts of $A$ and $A''$ are H\"older  cohomologous.
Then $A''$ is cohomologous to a diagonal cocycle as before.

Now (ii) and (iii) follow from (i) and Proposition \ref{diagonal}
as in the proofs of Theorems \ref{measurable implies Holder} and 
\ref{periodic cont} respectively. We note that conjugacy 
of the periodic data implies conjugacy of the model diagonal
cocycles and precludes having invariant sub-bundles of
different types as in Lemma \ref{not conjugate}.
$\QED$

\vskip.3cm

\section{Examples}


\subsection{Orientation-preserving cocycles with 
non-orientable invariant sub-bundles} \label{non-orientable}

{\it There exists a smooth orientation-preserving 
cocycle $A$ such that
\begin{itemize}
\item[(i)] $A$ has a unique invariant  sub-bundle that is not orientable;
\item[(ii)] $A$ preserves infinitely many non-orientable sub-bundles;
\item[(iii)] $A$ preserves exactly two transverse sub-bundles, which are non-orientable.
\end{itemize}
}
\vskip.2cm
\noindent 
Let $\T^2=\R^2 / \Z^2$ be the standard torus and let 
$\tilde \T^2=\R^2 / (2\Z \times \Z)$ be its double cover.
We consider 
$\,F= ${\small $\left[ \begin{array}{cc}  5 & 2 \\  2 & 1 \end{array} \right]$},\,
or any hyperbolic  matrix $[F_{ij}]\,$ in $SL(2,\Z)\,$
such that $F_{11}$ is odd and $F_{12}$ is even.
The map $F:\R^2 \to \R^2$ projects to Anosov automorphisms
$f:\T^2 \to \T^2$ and $\;\tilde f: \tilde \T^2 \to \tilde \T^2$.
Let $\tilde C: \tilde \T^2\to GL(2,\R)$ be the function given by 
$\tilde C(x)=R(\pi x_1)$, the rotation by the angle $\pi x_1$. 
This function is  not 1-periodic in $x_1$, and hence
it does not project to $\T^2$. 

\vskip.2cm

{\bf (i)} We define a cocycle $\tilde A:\tilde \T^2 \to GL(2,\R)$ 
over $\tilde f$ as 
\begin{equation}\label{tilde A}
   \tilde A(x)=\tilde C(\tilde fx)\, B\, \tilde C(x)^{-1}, \quad \text{where }\;
   B=  \left[ \begin{array}{cc}  1 & 1 \\  0 & 1\end{array} \right].
\end{equation}
The calculation below shows that  $\tilde A$ is 1-periodic in both 
$x_1$ and $x_2$ and thus it projects to a continuous and, in fact, 
analytic cocycle $A$ over $f$ on $\T^2$.
$$ 
\begin{aligned}
    & \tilde A(x)= R(\pi(5x_1+2x_2)) 
    \left(\,\Id +\left[ \begin{array}{cc}  0 & 1 \\  0 & 0 \end{array} \right]
    \right)
     R(-\pi x_1) = \\
     &=R(2\pi(2x_1+x_2))+
     \left[ \begin{array}{cc} 
     \frac{\sin (2\pi(2x_1+x_2))+\sin(2\pi(3x_1+x_2))}{2} & 
     \frac{\cos (2\pi(2x_1+x_2))+\cos(2\pi(3x_1+x_2))}{2}  \\ 
      \frac{\cos (2\pi(3x_1+x_2))-\cos(2\pi(2x_1+x_2))}{2}  & 
       \frac{\sin (2\pi(2x_1+x_2))+\sin(2\pi(3x_1+x_2))}{2} \end{array} \right].  
    \end{aligned}
$$

The constant cocycle $B:\tilde \T^2\to GL(2,\R)$ preserves
only the  sub-bundle $\tilde \E_1$ spanned by the the first coordinate vector. 
Hence $\tilde \V(x)=\tilde C(x)\,\tilde \E_1$ is the unique invariant 
 sub-bundle for $\tilde A$.
As $\tilde C(x)=R(\pi x_1)$, it is easy to see that
$\tilde \V$ projects to a continuous $A$-invariant sub-bundle $\V$
on $\T^2$, which is not orientable as its orientation  
is reversed along the first coordinate loop.

Clearly, the cocycles $A$ and $B$ on $\T^2$
are not continuously cohomologous as a continuous
conjugacy preserves orientablility of invariant
sub-bundles. In fact, they are not even measurably
cohomologous, as follows from Theorem 
\ref{measurable implies Holder}. 
However, their lifts  are smoothly 
cohomologous via $\tilde C$ on $\tilde \T^2$.

\vskip.2cm

{\bf (ii)} Considering $B=\Id\,$ in \eqref{tilde A} yields an example.
Since any constant sub-bundle $\tilde \V_{\text{const}}$
is preserved by $B$, 
$\tilde A$ preserves the sub-bundles 
$\tilde \V=\tilde C \tilde \V_{\text{const}}$.
These sub-bundles project to $A$-invariant non-orientable
sub-bundles on $\T^2$.

\vskip.2cm

{\bf (iii)} We consider
$B= \text{diag}\,(2,1)$ 
in \eqref{tilde A}.  It is easy to see
that then the cocycle $A$ has exactly two transverse invariant sub-bundles 
that  are non-orientable.
$\QED$

\vskip.3cm
\subsection{Construction of Example  \ref{periodic conj}}$\;$

\noindent Let  $\a$ and $\b$ be H\"older functions
such that  $\a(x)>0$ and  $\b(x)>0$ for all $x$ in $\M$;\\
for two periodic points $p_1$ and 
$p_2$ of periods $n_1$ and $n_2$ respectively,
$$
\a(f^ip_1)=\b(f^i p_1), \;\; 0\le i \le n_1-1,\quad\text{and}\quad
\a(f^ip_1)=2\b(f^i p_2), \;\; 0\le i \le n_2-1;
$$ 
and
$\b(x)\le \a(x) \le 2\b(x)<\e$ for all $x$.
The function $\b$ can be chosen constant.
\vskip.1cm

 Since $\a^+(p,n)> 0$ and 
$\b^+(p,n)> 0$ at every periodic point $p$, 
 the functions $\a$ and $\b$ are not cohomologous to 0,
 and the matrices $\A(p,n)$ and $\B(p,n)$ are conjugate by 
$$
   C(p)=\left[ \begin{array}{cc}  \a^+(p,n)/\b^+(p,n) &  0 \\ 
       0& 1 \end{array} \right].
$$
Since $1\le  \a^+(p,n)/\b^+(p,n) \le 2$
for every $p$, $\,C(p)$ is uniformly bounded.

As $\a^+(p_1,n_1)=\b^+(p_1,n_1)$ and
 $ \a^+(p_2,n_2)=2\b^+(p_2,n_2),$
there is no constant $c$ such that
$\a^+(p,n)-c\b^+(p,n)=0$ for every periodic $p$.
Thus by Proposition \ref{triangular}\,(ii) the cocycles $A$ 
and $B$ are not measurably cohomologous.
$\QED$


\subsection{Construction of Examples \ref{general 1}} $\;$
\vskip.2cm

{\bf (i)} We describe a simplified version of the example in 
Section 9 of \cite{PW}. \\
Let $f:\M\to \M$ be a $C^2$ Anosov diffeomorphism that 
fixes a point $x_0$,  and let $\a(x)$ be a smooth function such 
that $\a(x_0)=1$ and  $0<\a(x)<1$ for all $x\ne x_0$.  
The cocycles can be made arbitrarily close to the identity
by choosing $\b$ close to 0 and $\a(x)$ close to 1.
Since the matrices $A(x_0)$ and $B(x_0)$
are not conjugate,  the cocycles $A$ and $B$
are not continuously cohomologous.

A measurable conjugacy is constructed in the form 
$C(x)=${\small $\left[ \begin{array}{cc} 1 & c(x) \\ 0 & 1 \end{array} \right]$}.
Then $A(x)=C(fx)B(x)C(x)^{-1}$ is equivalent to 
$$
\left[ \begin{array}{cc} \a(x) & \b \\ 0 & 1 \end{array} \right] =
\left[ \begin{array}{cc} \a(x) & c(fx) - \a(x)c(x) \\ 0 & 1 \end{array} \right].
$$
A function $c\,$ such that $\,c(fx) =\b+ \a(x)c(x)$ is obtained as 
a series.  Let
$$
  c_m(x)=\b\cdot \left( 1+\a(f^{-1}x)+\a(f^{-1}x)\a(f^{-2}x)+ \cdots +
  \a(f^{-1}x)\cdots \a(f^{-m}x) \right).
$$  
By  the Birkhoff Ergodic Theorem,
$( \a(f^{-1}x)\cdots \a(f^{-m}(x) ) ^{1/m} \to \a<1$ a.e.
It follows that the sequence $\{c_m(x)\}$ converges to a limit 
$c(x)$ a.e., and the function $c(x)$ is measurable 
as a limit of continuous functions. The functions $c_m$
satisfy the equation $c_m(fx)=\b+\a(x)c_{m-1}(x)$, and passing 
to the limit we see that $c(fx)=\b+\a(x)c(x)$.
   
\vskip.2cm
{\bf (ii)} Let $f$ and $\a$ be as in  (i). Clearly, $B$ preserves 
the coordinate sub-bundles $\E_1$ and $\E_2$.  Hence $A$  preserves 
$\E_1$ and a measurable sub-bundle $ \V =C \E_2\ne \E_1$, which is not continuous. Indeed, as we show below $\E_1$ is the only continuous 
$A$-invariant sub-bundle.
A direct calculation shows that  for $p=f^np$,

$$
\begin{aligned}
\A(p,n)& =\left[ \begin{array}{cc}  \a^\times(p,n) & \b\cdot \a^*(p,n) 
\\ 0& 1  \end{array} \right], \; \text{ where }\\
    \a^*(p,n) & =1+\a(f^{n-1}p)+\a(f^{n-1}p)\a(f^{n-2}p)+\cdots +  
    \a(f^{n-1}p)\cdots \a(fp) =\quad \\ 
    & =1+\a(f^{-1}p)+\a(f^{-1}p)\a(f^{-2}p)+\cdots +  
    \a(f^{-1}p)\cdots \a(f^{-n+1}p).
\end{aligned}
$$
Hence the eigenvectors of the matrix $\A(p,n)$ are ${\mathbf e}_1$ and 
\begin{equation} \label{v(p)}
 {\mathbf v}(p)=\left[ \begin{array}{c}  
c(p) \\ 1 \end{array} \right], \quad\text{where}\quad 
c(p)= \frac{\b\cdot \a^*(p,n)}{1- \a^\times(p,n)}\,.
\end{equation}
We note that $c(p)=\lim_{m\to\infty} c_m(p)$, where $c_m(p)$
are as in (i).

\begin{lemma} \label{c(p)}
Let $\,x\ne x_0$,  $\,\e>0$, and $\,N>0$.
Then there exists a periodic point $q\ne x_0$
such that $\,\dist\,(q,x)< \e\,$ and $\,c(q)>N$.

\end{lemma}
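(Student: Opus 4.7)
The strategy is to produce a periodic point $q$ near $x$ whose backward orbit $\{f^{-j}q\}$ rapidly descends to an arbitrarily small neighborhood of $x_0$ and lingers there long enough that the series
\[
c(q) \;=\; \frac{\b\,\a^*(q,n)}{1 - \a^\times(q,n)} \;=\; \b\sum_{k=0}^\infty \prod_{j=1}^k \a(f^{-j}q)
\]
(valid for periodic $q$ of period $n$) is dominated by many near-one terms while the denominator stays bounded below.

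Fix $\eta>0$ to be chosen small later. Since $f$ is transitive Anosov and $x_0$ is a hyperbolic fixed point, the set $W^s(x_0)\cap W^u(x_0)$ is dense in $\M$, so we may pick a transverse homoclinic point $z_0$ with $\dist(z_0,x)<\e/2$. Hyperbolicity gives $\dist(f^{\pm j}z_0,x_0)\le Ce^{-\kappa j}$. For $N$ large, the orbit segment of length $2N$ starting at $y=f^{-N}z_0$ nearly closes up, since $f^{2N}y=f^Nz_0$ lies within $2Ce^{-\kappa N}$ of $y$; the Anosov closing lemma then furnishes a periodic point $y^*$ of period $2N$ shadowing the segment with error $O(e^{-\kappa N})$. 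Setting $q:=f^Ny^*$ produces a periodic point with $\dist(q,z_0)=O(e^{-\kappa N})$, so for $N$ large $\dist(q,x)<\e$ and $q\ne x_0$.

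Set $K:=\lceil\kappa^{-1}\log(2C/\eta)\rceil$ and $\eta_1:=\max\{1-\a(y):\dist(y,x_0)\le\eta\}$; by continuity of $\a$ at its maximum, $\eta_1\to 0$ as $\eta\to 0$. Since $f^{-j}q$ shadows $f^{-j}z_0$ for $j=1,\dots,N$ and $f^{2N-j}z_0$ for $j=N+1,\dots,2N$, we obtain $\dist(f^{-j}q,x_0)<\eta$ for every $j\in[K,2N-K]$. Writing $P_r:=\prod_{j=1}^r\a(f^{-j}q)$, one has $P_r\ge P_K(1-\eta_1)^{r-K}$ on this range. The crucial ingredient is a lower bound $P_K\ge c_0>0$ \emph{uniform in $\eta$}: up to a shadowing correction $O(Ke^{-\kappa N})$, $\log P_K$ equals $\sum_{j=1}^K\log\a(f^{-j}z_0)$, and the smoothness of $\a$ at $x_0$ combined with $\dist(f^{-j}z_0,x_0)\le Ce^{-\kappa j}$ gives $1-\a(f^{-j}z_0)\le C''e^{-\sigma\kappa j}$ for some $\sigma>0$, so $\sum_j(1-\a(f^{-j}z_0))$ converges geometrically and $-\log P_K$ is uniformly bounded.

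Choose $N$ so that $(1-\eta_1)^{2N-2K-1}\le 1/2$; summing then yields
\[
\a^*(q,2N) \;\ge\; \sum_{r=K+1}^{2N-K-1} P_r \;\ge\; P_K\sum_{k=1}^{2N-2K-1}(1-\eta_1)^k \;\ge\; \frac{c_0}{4\eta_1}.
\]
Meanwhile $\a^\times(q,2N)\le\a(q)\le\a(x)+O(\e^\sigma)<1$, so $1-\a^\times(q,2N)$ is bounded below by a constant $\delta_x>0$ depending only on $\a(x)$, giving $c(q)\ge \b c_0/(4\eta_1\delta_x)$, which exceeds $N$ once $\eta$ is small enough. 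The main obstacle is the $\eta$-uniform lower bound on $P_K$: without it, the $1/\eta_1$ blow-up in $\a^*$ could be cancelled by a shrinking prefactor, as would happen with the Specification Property, whose transition window $M_\eta$ forces a factor of order $m^{M_\eta}$ that is polynomially small in $\eta$. The exponential quality of Anosov closing, combined with smoothness of $\a$ at its maximum, is precisely what prevents this loss.
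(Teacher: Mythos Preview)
Your argument is correct, but it takes a considerably more involved route than the paper, and your closing critique of the Specification approach rests on a misreading of how that approach is actually deployed.

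The paper applies Specification with a \emph{fixed} shadowing distance $\e$ (the one from the hypothesis, possibly shrunk once so that $q$ stays away from $x_0$), producing a periodic point $q$ whose orbit contains $k$ consecutive iterates $\e$-close to $x_0$, with only $2M_\e+1$ transition iterates. The decisive input is the close-orbits estimate (Corollary~\ref{close orbits prod}): it gives a \emph{uniform} lower bound $\a(q')\a(fq')\cdots\a(f^jq')\ge e^{-\gamma\e}$ for every $j\le k-1$, so each of the $k$ relevant partial products in $\a^*(q,n)$ is bounded below by the fixed constant $m^{M_\e}e^{-\gamma\e}$. Hence $c(q)/\b\ge\a^*(q,n)\ge m^{M_\e}k\,e^{-\gamma\e}\to\infty$ as $k\to\infty$. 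No shrinking of $\e$, no uniformity argument for a prefactor; $m^{M_\e}$ is simply a fixed positive number.

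Your proof instead bounds each factor near $x_0$ pointwise by $1-\eta_1$, which yields only a geometric tail $P_K(1-\eta_1)^{r-K}$ whose sum saturates at roughly $P_K/\eta_1$; to make this large you must send $\eta\to 0$, which forces $K\to\infty$ and requires the additional (correct, but delicate) argument that $P_K\ge c_0$ uniformly in $\eta$ via convergence of $\sum_j(1-\a(f^{-j}z_0))$. Had you used the close-orbits product estimate rather than the pointwise bound, the sum would grow linearly in the closing length and none of this would be needed. In particular, Specification with fixed $\e$ already avoids the ``$m^{M_\eta}$ polynomially small in $\eta$'' loss you warn about, because $\eta$ never moves. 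One presentational issue: you use the symbol $N$ both for the threshold in the lemma statement and for the closing-length parameter, which obscures the order of quantifiers; these should carry distinct names.
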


\begin{proof}
We assume that $0<\e<\frac12 \dist(x,x_0)$
and apply  the Specification Property 
to the  orbit segments $\{x\}$ and 
$\{x_0, fx_0, \dots, f^{k-1} x_0 \}=\{x_0, \dots, x_0\}$. 
Then there exists  a number $M_\e$ independent of $k$ and 
a periodic point $q$ such that 
$$
 \dist (q,x)<\e, \quad
 \dist (f^{M_\e+1+i}q, x_0 ) \le \e, \;\; i=0, \dots, k-1,
 \quad \text{and }\; f^{2M_\e+k+1}q=q.
$$
Clearly, $q\ne x_0$.
Let $q'=f^{M_\e+1}q$. Since the function $\a$ is Lipschitz
and $\a(x_0)=1$, it follows from Corollary \ref{close orbits prod}
that there exists a constant $\gamma$ independent of $k$ 
such that
$$
\a(q')\a(fq')\dots \a(f^jq')
\ge e^{-\gamma\e} \quad\text{for }\; j=0, \dots, k-1.
$$
It follows  that
$$
\begin{aligned}
 &c(q)/\b \; \ge \;\a^*(q,2M_\e+k+1) \,\ge  \\
  & \a(f^{2M_\e+k}q) \dots \a(f^{M_\e+k}q)  +\dots  + 
   \a(f^{2M_\e+k}q)\dots \a(f^{M_\e+1}q) = \\ 
  &  \a(f^{2M_\e+k}q) \dots \a(f^{M_\e+k+1}q) \cdot
  \left( \a(f^{k-1}q') + \dots +\a(f^{k-1}q')\cdots \a(q')\right) \ge 
m^{M_\e} k \, e^{-\gamma\e}, 
 \end{aligned} 
$$
where $m=\min_\M \a(x)$. 
Taking a sufficiently large $k$ ensures that $c(q)>N$.
\end{proof}

Let $\V\ne \E_1$ be a continuous $A$-invariant sub-bundle
and let $x\ne x_0$ be a point such that $\V(x)\ne \E_1(x)$.
Then for every periodic point $p$ in a small neighborhood of $x$,
$\V(p)\ne \E_1(p)$ and hence $\V(p)$ is spanned 
by ${\mathbf v}(p)$ as in \eqref{v(p)}. It follows from Lemma~\ref{c(p)}
and continuity of $\V$ that $\V(x)=\E_1(x)$, a contradiction.
\vskip.3cm

{\bf (iii)} 
We describe an example similar to one in  \cite{Guy}.
Let $f:\M\to\M$ be an Anosov diffeomorphism and let $S$ 
be a closed $f$-invariant set in $\M$ that does not contain 
periodic points.  Let $\a$ be a smooth function such that
$$ 
   \a(x)=1\;\text{ for }\;x\in S \quad \text{and}\quad
   0< \a(x)<1 \;\text{ for }\;x\notin S.
$$   
At every periodic point $p=f^np$ the matrices 
$\A(p,n)$ and $\B(p,n)$ have the same eigenvalues,
1 and $\a^\times(p,n)<1$, are hence are conjugate.

However, there is no continuous function $C$
such that $A(x)=C(fx)B(x)C^{-1}(x)$.
Otherwise, for $x\in S$
$$
  \A(x,n)=
  \left[ \begin{array}{cc} 1 & n\b \\ 0 & 1 \end{array} \right] =
  C(f^n x)\B(x,n)C(x)^{-1}=C(f^n x)C(x)^{-1},
 $$
 which implies that $C$ is unbounded. 
 
 It can be shown as in (i) that the cocycles $A$ and $B$ 
 are measurably comologous. It can also be seen as in (ii)
 that the set of conjugacies $C(p)$ at the periodic points
 is unbounded, unlike in our Example  \ref{periodic conj}.
 $\QED$

\vskip1cm

\end{document}